\documentclass[opre,nonblindrev]{informs3}

\DoubleSpacedXI

\usepackage{endnotes}
\let\footnote=\endnote

\usepackage{natbib}
 \bibpunct[, ]{(}{)}{,}{a}{}{,}%
 \def\bibfont{\small}%
\TheoremsNumberedThrough     
\ECRepeatTheorems

\EquationsNumberedThrough    

\usepackage{subcaption}
\usepackage[utf8]{inputenc} 
\usepackage[T1]{fontenc}    
\usepackage{url}            
\usepackage{booktabs}       
\usepackage{amsfonts}       
\usepackage{nicefrac}       
\usepackage{microtype}     
\usepackage{thmtools}
\usepackage{amsmath}  
\usepackage{bm}
\usepackage{threeparttable}
\usepackage{float}
\usepackage[table]{xcolor}
\usepackage{graphicx}
\usepackage{placeins} 
\usepackage{todonotes}
\usepackage{thm-restate}

\newcommand{\prob}[1]{\mathbb{P}\prtr{#1}}

\newcommand{\expee}[2]{\mathbb{E}_{{#1}}\prts{#2}}

\newcommand{\vect}[1]{\mathbf{#1}}

\newcommand{\norm}[1]{\lVert #1 \rVert}

\newcommand{\abs}[1]{\left|{#1}\right|}
\newcommand{\ceiling}[1]{\left\lceil{#1}\right\rceil}

\newcommand{\prtc}[1]{\left\{{#1}\right\}}
\newcommand{\prts}[1]{\left[{#1}\right]}
\newcommand{\prtr}[1]{\left({#1}\right)}
\newcommand{\prta}[1]{\langle{#1}\rangle}

\newcommand{\grad}{\nabla}

\newcommand{\events}[1]{\mathbf{1}_{\mathcal{E}_{t,S}}}

\newcommand{\ProblemRare}{\ensuremath{\mathbf{P}_{\lambda}}}

\def\theARTICLETOP{} 

\makeatletter
\def\@evenhead{%
  \vbox{\hbox to \textwidth{\normalfont\small\thepage\hfill}\vskip 4pt\hrule height 0.4pt}}
\def\@oddhead{%
  \vbox{\hbox to \textwidth{\normalfont\small\hfill\thepage}\vskip 4pt\hrule height 0.4pt}}
\makeatother
\begin{document}

\RUNTITLE{Safe Start}
\TITLE{Safe Start: Configuring Optimization Algorithms for Decision-Making under Extreme Risks}


\ARTICLEAUTHORS{%
\AUTHOR{Henry Lam}
\AFF{Department of Industrial Engineering and Operations Research, Columbia University, New York, NY 10027, \EMAIL{khl2114@columbia.edu}}
\AUTHOR{Wasin Meesena}
\AFF{Department of Industrial Engineering and Operations Research, Columbia University, New York, NY 10027, \EMAIL{wm2501@columbia.edu}}
} 
\RUNAUTHOR{Lam and Meesena}

\ABSTRACT{%
We consider stochastic optimization where the goal is not only to optimize an average-case objective, but also to mitigate the occurrence of rare catastrophic events. This problem is motivated by safety-aware decision-making and AI training. We first argue that, in the presence of a simulation model, natural attempts to integrate variance reduction into optimization, even executed in a reasonable adaptive fashion, encounter fundamental challenges in guaranteeing realistic runtime when using common stochastic gradient descent algorithms. This challenge arises from the extreme sensitivity of tail-based objectives with respect to the decision variables, which renders a dichotomic failure of convergence regardless of what step size we select. We offer remedies based on a new notion of \emph{safe start} that allows for efficient finite-time error control, and show how the sampling complexity scales favorably under the combination of safe start and variance reduction. We illustrate our methodologies on examples in portfolio optimization and robust classification with neural networks.}%



\KEYWORDS{stochastic optimization, variance reduction, rare events, large deviations, safe start.}

\maketitle

\section{Introduction}
\label{sec:intro}

In many high-stakes problems, it is critical for decision-making to account for the occurrence of rare catastrophic events, in addition to standard average-case performances. For example, in designing human-interacting physical systems such as self-driving vehicles, the objective must prioritize the prevention of road conflicts and fatalities \citep{huang2017accelerated,o2018scalable}. Similarly, financial portfolio management needs to balance gains with hedges against extreme losses \citep{embrechts2013modelling,mcneil2015quantitative}. In simulation modeling, estimating these catastrophic events, and understanding how they occur, has been a long-standing focus under the umbrella of \emph{rare-event simulation}. Despite many established approaches in this literature, decision optimization with the goal of \emph{preventing} catastrophic events appears to be substantially open and has only very recently started to gather attention \citep{he2024adaptive,tong2022-opt-under-rare,blanchet2024efficient,deo2025achieving}. On a high level, this work attempts to systematically understand and remedy the challenges in running optimization algorithms for objectives that critically avoid rare events. As we will explain, this involves several novel fundamental issues that deviate from the established literature in the \emph{evaluation} of rare event likelihoods.

We consider the following generic ``rare-event optimization'' formulation:
\begin{equation}\label{eq:start-formulation}
     \min_{\vect{x} } F(\vect{x}) = f(\vect{x}) + \gamma p(\vect{x}).
\end{equation}
where $\vect{x}$ is the decision, $f$ the expectation of some loss function, and $p$ an extreme-risk term, i.e., it involves the tail of the underlying randomness, such as the probability of a rare event or value-at-risk at a high level. \(\gamma>0\) is a weighting parameter that balances the expected loss and the extreme risk. To provide some perspective, first, note that one can typically choose a conservative decision $\vect x$ to almost fully avoid the catastrophic event captured in $p(\vect x)$. This, however, would usually be impractical and thus, for most problems, accounting for catastrophic events means a balance between average-case performance and the extreme risk, which is exactly what \eqref{eq:start-formulation} is motivated from. Second, as the weighting parameter $\gamma$ increases, more weight is put on the extreme risk. Here, $\gamma$ should be roughly of order $1/p(\vect{x}^*)$, where $\vect{x}^*$ is an optimal solution, for formulation \eqref{eq:start-formulation} to be meaningful. This is because this is the order of $\gamma$ that would allow the two terms in \eqref{eq:start-formulation} to be of the same magnitude at $\vect{x}^*$; otherwise, it would mean that we either focus on the average-case only, or the target solution is highly conservative. To explain this balancing further, as well as understanding the choice of $\gamma$ more concretely, formulation \eqref{eq:start-formulation} could be viewed in two ways/examples:

\noindent\emph{Chance-constrained optimization:} A natural optimization approach to explicitly avoid target risks is a chance-constrained optimization \citep{ben2009robust,calafiore2005uncertain}, namely \begin{equation}\label{CCP}
     \min_{\vect{x} } f(\vect{x}) \text{\ \ subject to\ \ } p(\vect{x})\leq\tilde\gamma,
\end{equation}
where $\tilde\gamma$ is often called the tolerance level. If $p$ denotes the probability of an adversarial event, then \eqref{CCP} stipulates that the decision must explicitly control this probability to be within $\tilde\gamma$. When $\tilde\gamma$ is a low level, e.g., $0.001\%$, this means $p( \vect{x}^*)$ must be similarly small, and a heuristic calculation would reveal that $\gamma$ acts as the Lagrangian multiplier for the chance constraint in \eqref{CCP}, and is of order $1/p(\vect{x}^*)$.


\noindent\emph{Extreme quantile estimation/regression:} Suppose $\xi\in\mathbb R$ is a random variable. Then, taking $f(\vect x)=\vect x$ and $p(\vect x)= \expee{\xi}{\xi-\vect x}_+$ (where $\vect x\in\mathbb R$) in \eqref{eq:start-formulation} is precisely the quantile estimation problem, for the target level $1/\gamma$. Moreover, this objective itself becomes the conditional value-at-risk. In this formulation, $\gamma$ is the reciprocal of the tail probability of $\xi$, which in many cases has the same magnitude as $p(\vect x)$ (ignoring logarithmic terms). Moreover, the above can be generalized to quantile regression \citep{chernozhukov2017extremal,pasche2024neural}, where $\vect x$ now represents the set of parameters (e.g., linear coefficients) in a quantile regression model $f_{\vect x}(\vect z)$ with covariates $\vect z$, and $p(\vect x)= \expee{\vect z,\xi}{\xi-f_{\vect x}(\vect z)}_+$.


Our goal is to obtain a good solution for \eqref{eq:start-formulation}, under the paradigm that the underlying randomness $\xi$ is simulable and, moreover, susceptible to variance reduction. In the following, we will discuss the challenges through several interacting layers: The basic issue of crude Monte Carlo that has driven much of the rare-event simulation literature (Section \ref{sec:basic}), the ``curse of circularity'' in rare-event optimization (Section \ref{sec:curse of circularity}), and finally the challenge in sampling complexity conversion (Section \ref{sec:lack of bridge}). The last one especially concerns the (lack of) bridge between the classical efficiency notion in the rare-event literature and optimization efficiency, and is the main focus of this paper that motivates our novel ``safe start'' notion (introduced in Section \ref{sec:contributions}).


\subsection{Basic Evaluation Challenge and Remedy via Variance Reduction} \label{sec:basic}
The established literature of rare-event simulation studies the evaluation of rare-event probabilities or associated risk quantities \citep{bucklew2004introduction,juneja2006rare,blanchet2012state}. Suppose $p=P(\xi\in S)$ denotes a rare-event probability of $\xi$ falling into a region $S$ (there is no decision $\vect x$ here to make). A natural estimator would be the sample proportion of hits onto $S$. This, however, is a poor estimator when $p$ is tiny because, as readily intuited, more likely than not we would simply output 0 as the estimate due to 0 hits. That is, we fail to estimate the magnitude of $p$, unless we use an extremely large sample size. To mathematically articulate this issue, consider an estimate ``good'' if the discrepancy between the estimate and the target probability is within $\epsilon$, relative to the target probability (instead of the absolute discrepancy), i.e., $|\hat p-p|\leq\epsilon p$. Under this criterion, 0 would be a lousy estimate. Now, from the Chebyshev inequality we have $P(|\hat p-p|>\epsilon p)\leq\sigma^2/(\epsilon^2p^2n)$, where $\sigma^2$ is the per-run variance. From this, the required sample size to obtain a ``good'' estimate with $(1-\alpha)$ confidence level, i.e., $P(|\hat p-p|>\epsilon p)\leq\alpha$, can be deduced by enforcing $\sigma^2/(\epsilon^2p^2n)\leq\alpha$, thus giving the required size $\sigma^2/(\epsilon^2p^2\alpha)$. Here, the ratio between the standard deviation and mean, i.e., $\sigma/p$, is known as the relative error. This quantity controls the required sample size in that, the larger the squared relative error, the larger the sample size is proportionately. Unfortunately, for naive sample proportion, its relative error is $\sqrt{(1-p)/p}$, which blows up the required sample size when $p$ is tiny.

To tackle the above challenges, \emph{variance reduction} methods \citep{asmussen2007stochastic,glasserman2004monte} aim to drive down the variance $\sigma^2=p(1-p)$ in sample proportion, or crude Monte Carlo, to much smaller: In large deviations settings, this usually means, up to a logarithmic factor, order $p^2$ instead of $p$ as $p\to0$, which translates to a sample size that only depends on $p$ logarithmically. These variance reduction methods include importance sampling \citep{bucklew2004introduction,juneja2006rare,blanchet2012state}, multilevel splitting \citep{glasserman1999multilevel,dean2009splitting,villen2011rare}, and their variations \citep{de2005tutorial,rubinstein1997optimization,botev2013markov}.




\subsection{Curse of Circularity and Remedy via Adaptivity}\label{sec:curse of circularity}
The aforementioned evaluation challenge propagates to optimization problem \eqref{eq:start-formulation}: In order to solve \eqref{eq:start-formulation}, we first need to have a good estimate of $p(\vect x)$ in the objective function. This, however, gives rise to a ``curse of circularity'' \citep{he2024adaptive,aolaritei2025stochastic}. To explain, most rare-event variance reduction schemes, prominently importance sampling, are known to be ``double-edged swords'', in that they are highly sensitive to the problem instance, i.e., the scheme needs to be well-designed and well-tuned according to the problem configuration to achieve variance reduction, otherwise it can perform very poorly. However, by the nature of optimization, this configuration is not known in advance since the decision has not been obtained. That is, on one hand a good optimizer needs a good variance reduction scheme, and on the other hand a good variance reduction scheme requires knowledge about the optimal solution, which leads to the ``curse of circularity''. To break this curse, \cite{he2024adaptive} and \cite{aolaritei2025stochastic} use \emph{adaptivity}, by iteratively updating solutions where at each iteration the variance reduction is applied as if the current solution is optimal. In particular, stochastic gradient descent (SGD) or stochastic approximation \citep{nemirovski2009robust,beck2017first} is a natural iterative procedure to embed adaptive variance reduction. \cite{he2024adaptive} and \cite{aolaritei2025stochastic} show central limit theorems for the updated solutions that reveal, indeed, that such adaptive procedures enjoy asymptotic variance that is within-class the best possible, i.e., it is on par with the variance achieved as if the variance reduction scheme was well-tuned knowing the optimal solution in advance.

\subsection{Lack-of-Bridge to Sampling Complexity and Remedy via Safe Start}\label{sec:lack of bridge}
While central limit theorems and asymptotic variance provide positive guidance on the choice and performance of adaptive variance reduction, such results do not inform the sampling requirements to achieve a good solution. This issue is critically distinct from the evaluation task -- In evaluation, the variance of an estimator translates \emph{directly} into the required sample size, since the standard Chebyshev inequality reveals that the latter is proportional to the (squared) relative error $\sigma^2/p^2$. In optimization, the situation is substantially more complicated since there are no longer simple guiding inequalities to inform the conversion of relative error into sampling complexity. 

Table \ref{tab:rare_event_comparison} summarizes the key distinctions between estimation and optimization that we have discussed so far. In estimation, to address the inefficiency of naive Monte Carlo arising from rarity, variance reduction methods have been well-established, and their challenge is often the sensitivity of their performance to problem configuration which necessitates careful design and tuning. Nonetheless, the analysis framework using the notion of relative error is well-established and justified, since a small relative error immediately leads to a small sampling complexity thanks to the simple Chebyshev's inequality. In contrast, for optimization that has only been recently investigated, the curse of circularity arises from the sensitivity of variance reduction, this time due to the lack of knowledge on the optimal solution a priori. To this end, \cite{he2024adaptive} and \cite{aolaritei2025stochastic} propose adaptive variance reduction as a remedy, by leveraging existing good-for-evaluation rare-event estimation tools. However, their analyses are based on asymptotic variance (which in turn relates to relative error). The main challenge now is that there is no established concentration result that directly governs the translation from variance-based criteria to sampling complexity for optimization algorithms.












\begin{table}[htbp]



\scriptsize
\linespread{1.5}\selectfont
\caption{Comparison of Estimation vs. Optimization under Rare Events }
\label{tab:rare_event_comparison}
\renewcommand{\arraystretch}{2.5} 
\begin{tabular}{@{} >{\raggedright\arraybackslash}p{0.20\linewidth} p{0.37\linewidth} p{0.37\linewidth} @{}}
\toprule
\textbf{Task} & \textbf{Estimation} & \textbf{Optimization} \\
\midrule

\textbf{Success Criterion} & 
An estimator $\hat{p}$ with small relative discrepancy with the target rare-event probability $p$: \newline $\mathbb{P}(|\hat{p} - p| \leq \epsilon p) \approx 1$. & 
Output $\hat{x}$ close to the minimizer $\vect{x}^*$ of $F$: \newline $\mathbb{P}(F(\hat{x}) \leq (1 + \epsilon)F(\vect{x}^*)) \approx 1$. \\

\textbf{Sampling Complexity }\textbf{(Computation Cost)} & 
Required total number of samples simulated ($N$). & 
Required total number of samples across the entire SGD trajectory: \newline Iterations ($T$) $\times$ Batch Size ($B$). \\

\textbf{Existing Rarity-Induced Challenge and Remedy}  & 
Naive Monte Carlo is inefficient, and variance reduction can enhance sampling efficiency. However, designing good variance reduction typically needs to be analyzed case-by-case due to their ``double-edged sword" nature and sensitivity to problem configuration. &
Curse of circularity arises from the sensitivity of variance reduction scheme to the unknown optimal solution. Adaptive algorithm has been proposed as a remedy.\\

\textbf{Efficiency Notion}  & 
Relative error of estimator $\hat{p}$, $RE(\hat{p}) = \sqrt{\text{Var}(\hat{p})} / p$, is small (bounded or logarithmic in $p$ as $p\to0$). & 
Relative error of gradient estimator in the adaptive variance reduction scheme is small (Assumption \ref{assumption:second-moment}), and the \textbf{Safe Start} condition (Assumption \ref{def:safe-start}). \\


\textbf{Translation from Efficiency Notion to Sampling Complexity}  & 
 Chebyshev's inequality establishes that $N = \mathcal{O}(RE^2(\hat{p}))$. So a small (squared) relative error guarantees a proportionately small sampling complexity. &
\textbf{Our SGD analysis} establishes that a safe start initialization + efficient adaptive variance reduction for gradient estimates guarantees a small sampling complexity. \\

\bottomrule
\end{tabular}
\end{table}

\subsection{Our Contributions}\label{sec:contributions}
Our main contribution is to fill in the aforementioned significant gap, to understand what efficiency notions and algorithmic requirements drive the sampling complexity for rare-event optimization. Our theoretical results are two-fold: On the negative front, we show that, in general, embedding adaptive variance reduction into an iterative algorithm like SGD is fundamentally inadequate in achieving desirable sampling complexity, even if the variance reduction is excellent for the evaluation task. On the positive front, we show that, by using \emph{safe start} -- initializing the solution to be in a ``safe'' region where the risk term $p(\vect x)$ in \eqref{eq:start-formulation} is suitably small, we can guarantee a sub-exponential sampling complexity instead of an exploding exponential complexity, as long as we use a good-for-evaluation variance reduction. The last two entries in the ``Optimization'' column of Table \ref{tab:rare_event_comparison} highlight our main novelty: By introducing this new ``safe start'' notion as part of the algorithmic requirement, we argue, through our elaborate SGD analysis, that we can attain high sampling efficiency. Moreover, we demonstrate that this requirement is necessary, and so is the embedment of efficient adaptive variance reduction.


We briefly explain the intuition of safe start. At a high level, a reason why there is no easy translation from variance-based criteria to sampling complexity is the ultra-sensitive landscape of the objective function \eqref{eq:start-formulation} due to the rare-event penalty term. Across different solutions, rare-event probabilities can vary exponentially and, because of this, ``risky'' solutions can exhibit exponentially larger gradients than ``safe'' solutions. Consequently, it could happen that, no matter what step size we choose in the SGD, we would need an exponential number of iterations to obtain an adequately accurate solution, and this is the case even if we already embed efficient variance reduction scheme adaptively into the gradient estimation at each SGD iteration. Basically, we would either choose step size too small, in which case we have slow convergence, or risk making unstable big jumps, in which case we end up at a far-away position that again takes a lot of iterations to reach optimality. \emph{The key role of safe start is to prevent this exponential number of iterations, arising from the ultra-sensitivity of the objective landscape, from materializing.}



\subsubsection{An illustrative example.}

We give a preview of our main insights through the following example. Consider estimating the extreme $q$-quantile $x^*$ of a standard normal via the stochastic objective $\min_x \left\{ x + \frac{1}{1-q}\mathbb{E}[(Z - x)_+] \right\}$ using SGD with embedded variance reduction. Figure \ref{fig:teaser}a shows the convergence probability to $x^*$ across various SGD configurations including their constant step sizes ($\eta_0$ on the y-axis) and initializations ($x_0$ on the x-axis). Here, the convergence probability is defined as $\prob{\abs{x_{2000}-x^*}\leq 0.1}$ where $x_{2000}$ denotes the 2000-th iterate, and this probability is estimated via abundant repetition of SGD runs. In the heatmap, dark green cells represent configurations with convergence probability close to $1$, meaning they can reliably reach a tight neighborhood around $x^*$, while dark red cells indicate convergence probability close to $0$. The colors within this spectrum indicate convergence probabilities in between.

The blue dotted vertical line is the true quantile $x^*$. We see a sharp change of convergence behavior from the left to the right hand side of this line. On the left, the SGD fails to converge no matter what initialization and step size we choose (except when the configuration is very close to the vertical line). On the right hand side, \emph{all} initializations converge with high probability, by selecting a well-tuned step size. In this example, any initialization $x_0 \ge x^*$ is a safe start, since its rare-event penalty term $\mathbb{E}[(Z - x_0)_+]$ is less than that of $x^*$. That is, this example shows how enforcing a safe start can lead to efficient SGD while, in a very abrupt manner, not satisfying this condition leads to algorithmic failure.




Figure \ref{fig:teaser}b gives a closer look at the failure of SGD under an unsafe start ($x_0=0$). With a conservative step size ($\eta_0 = 10^{-5}$), the gradient updates move slowly. Conversely, increasing to $\eta_0 = 10^{-4}$ causes the trajectory to overshoot to $x_1 \approx 5.4$, followed by a painfully slow return. For even larger step sizes ($\eta_0 \geq 10^{-3}$), the updates blow up entirely, overshooting far outside the plotting window. These behaviors demonstrate the problematic dynamics of SGDs without safe start discussed earlier, that they either move slowly or exhibit unstable big jumps that again lead to prohibitively long iterations.






To close this discussion, Figure \ref{fig:teaser}c further illustrates the necessity of adaptive variance reduction to pair with safe start ($x_0=10$ in this figure). That is, both safe start and efficient adaptive variance reduction are needed to elicit optimization efficiency. In Figure \ref{fig:teaser}c, we show the trajectories of three algorithms: SGD with variance reduction (blue), standard SGD (red), and standard SGD using a $10,000\times$ larger mini-batch size compared to the first two (gray) (here ``standard SGD'' means not using variance reduction). Without variance reduction, the red trajectory explodes beyond the plot boundaries due to extreme variance, while the blue trajectory remains stably within the neighborhood of the true quantile. To make standard SGD achieve this same level of stability, the gray trajectory requires $10,000\times$ more samples in each iteration.

  
   


\begin{figure}[http]
    \centering
    \begin{minipage}{0.48\textwidth}
        \includegraphics[width=\linewidth]{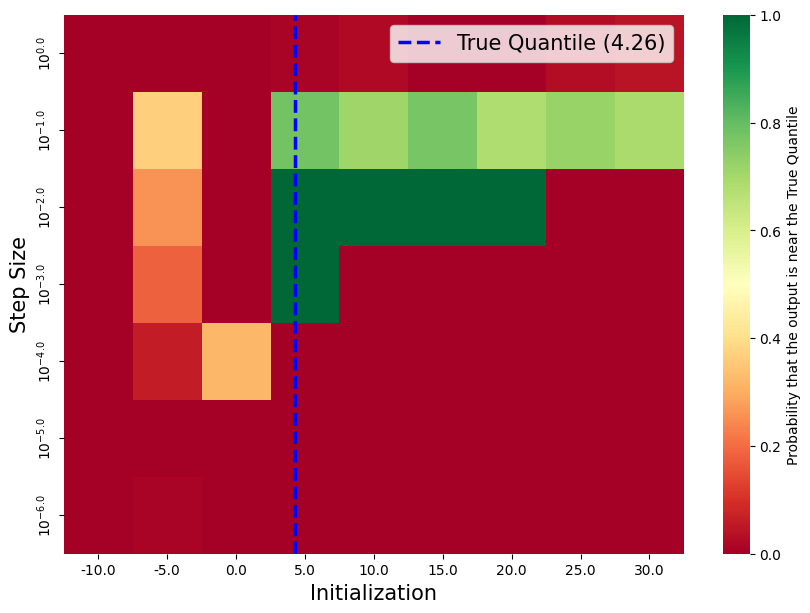}
        \centerline{(a)}
    \end{minipage}\hfill
    \begin{minipage}{0.48\textwidth}
        \includegraphics[width=\linewidth]{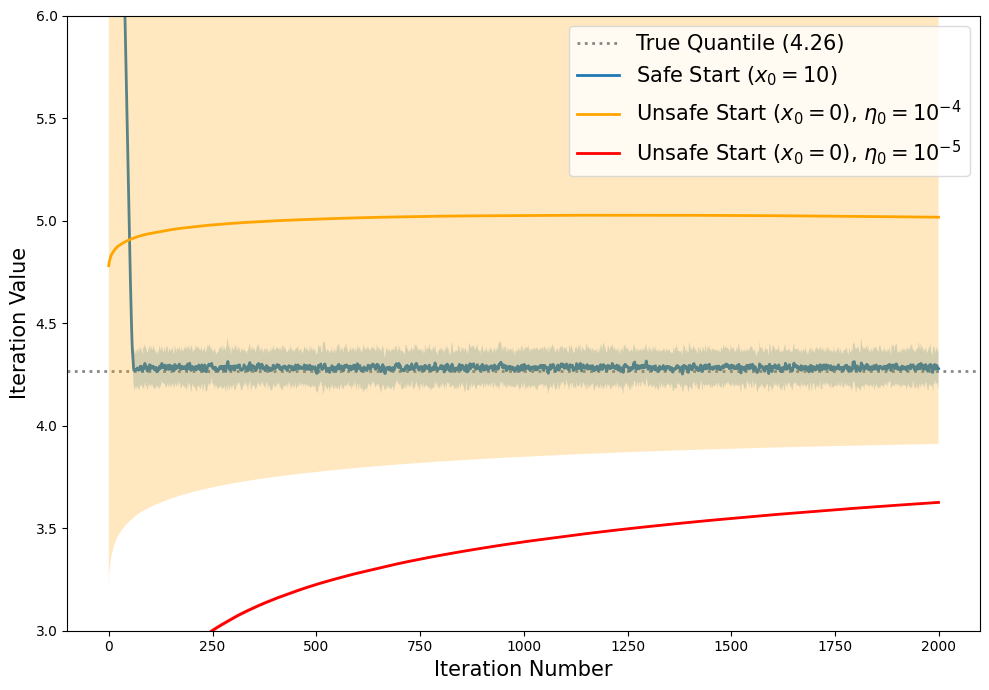}
        \centerline{(b)}
    \end{minipage}

    \vspace{0.5cm} 

    \begin{minipage}{0.48\textwidth}
        \includegraphics[width=\linewidth]{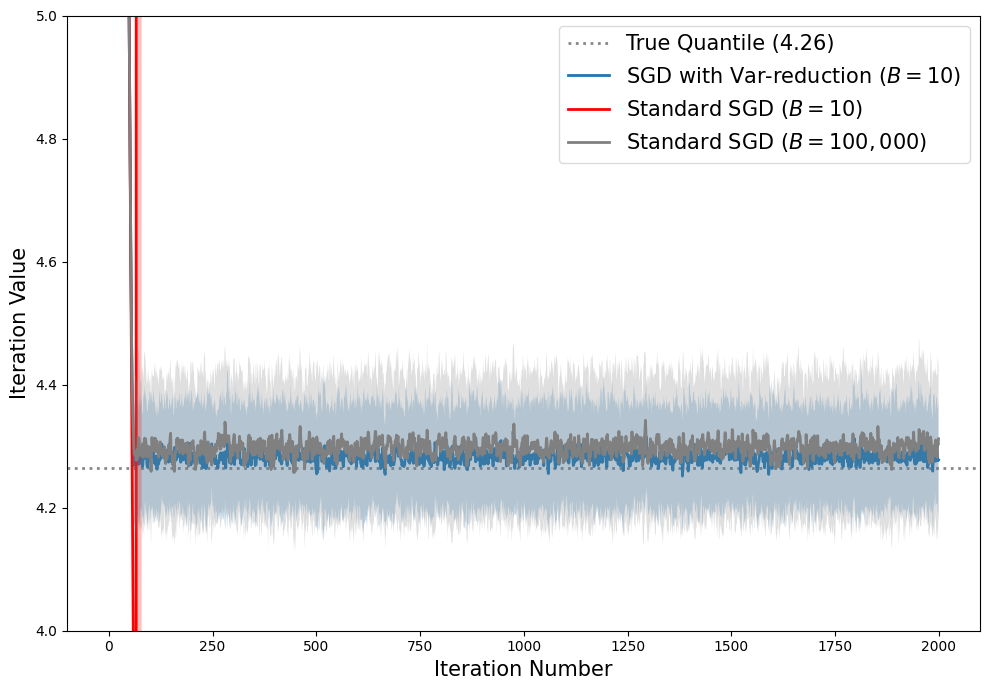}
        \centerline{(c)}
    \end{minipage}

 \caption{\textbf{SGD convergence across algorithmic configurations for extreme quantile estimation.} \textbf{(a)} Heatmap of convergence probability for different constant step size $\eta_0$ and initialization $x_0$. Vertical line signifies the true quantile $\vect{x}^*$. \textbf{(b)} Trajectories from a safe start ($x_0 = 10$) and an unsafe start ($x_0 = 0$). \textbf{(c)} Trajectories from a safe start ($x_0 = 10$) using SGD with variance reduction (blue), standard SGD (red), and another standard SGD with a $10,000\times$ larger mini-batch size than the previous two (gray). Here, $B$ denotes the mini-batch size per iteration.}
\label{fig:teaser}
\end{figure}

 
\subsection{Other Related Literature}
We close this introduction by briefly discussing other related recent works. First, \cite{deo2025achieving,deo2024importance}, similar to us, also study rare-event optimization. They use the notion of self-structuring importance sampling that exploits distributional self-similarity arising from the large deviations of optimization objectives, and use a ``transport'' map to execute the importance samplers. Unlike us, they do not need adaptive variance reduction or SGD and only require a single-stage sampling design. On the other hand, they rely on structural forms of the optimization objective, and as such their assumptions are arguably more restrictive than ours. \cite{blanchet2024efficient} study rare-event chance-constrained optimization under heavy tails. They focus on linear chance constraints and, like \cite{deo2025achieving,deo2024importance}, exploit linearity of the constraints and heavy-tail properties to derive efficient sampling schemes. \cite{tong2022-opt-under-rare} also investigate rare-event chance-constrained optimization, but under light-tailed settings. They propose the use of large deviations asymptotics to approximate the chance constraint and obtain tractable reformulations. Their approach has a spirit similar to \cite{glynn1996importance} who suggests inverting large deviations asymptotics to approximate extreme quantiles. Distinct from all these works, we focus on general objective functions and are motivated from the curse of circularity that advocates using SGDs to naturally embed adaptive variance reduction. More importantly, we investigate the sampling complexity in running SGD for rare-event optimization that provides, as far as we know, the first bridge between established notions in rare-event estimation and complexity notions in optimization, ultimately leading to the safe start concept. Lastly, we note that our framework is based on light-tailed large deviations, as opposed to heavy tail \citep{nair2022fundamentals,blanchet2012state} that would warrant a separate interesting line of work.

 
\subsection{Organization} The remainder of this paper is organized as follows. Section \ref{sec:setting} formalizes the rare-event optimization framework, detailing the assumptions on the setup, the rarity structure and the SGD procedures. Section \ref{sec:main} presents our main theoretical results and insights on safe start and its relations to adaptive variance reduction and step size selection. Section \ref{subsec:proof} discusses the key technical developments in achieving the main theoretical results. Section \ref{sec:experiment} presents numerical experiments that empirically validate our theoretical claims and demonstrate the practical implications in examples across quantitative finance and robust machine learning. Finally, the Appendix presents the proofs of all results in the paper.


\section{Problem Setting}
\label{sec:setting}

Building on \eqref{eq:start-formulation}, we introduce a ``rarity'' parameter $\lambda > 0$ that signifies the target rarity level (i.e., $\lambda\to\infty$ implies the likelihood of the rare event goes to 0). This parameter is a modeling artifact commonly used in the rare-event literature to facilitate large deviations analysis \citep{bucklew2004introduction,juneja2006rare,blanchet2012state}. With this, we consider the following formulation parameterized by the rarity level $\lambda$:
    \begin{equation*}\label{eq:main-formulation}
    (\ProblemRare) \qquad \min_{\mathbf{x} \in \mathcal{X} \subseteq \mathbb{R}^n} F(\mathbf{x}; \lambda) \quad \text{where} \quad F(\mathbf{x}; \lambda) = f(\mathbf{x}) + \gamma(\lambda) p(\mathbf{x})
    \end{equation*}


As in the introduction, we denote $\mathbf{x}^*(\lambda)$ as an optimal solution to formulation $\ProblemRare$. For convenience, we define $p^*(\lambda):=p(\mathbf{x}^*(\lambda))$, representing the extreme risk at this optimal solution when the rarity level is $\lambda$.

\subsection{Main Assumptions on Optimization Formulation}
To analyze the performance of our optimization algorithms as $\lambda$ scales, we make two assumptions. The first characterizes the asymptotic decay of the target risk, following a Large Deviation Principle (LDP) \citep{dembo2009large,budhiraja2019analysis} style.

\begin{assumption}[LDP of Target Risk]\label{assumption:ldp-optimization}
There exists a rate $I>0$ such that:
$$\lim_{\lambda\to\infty} \frac{1}{\lambda}\log p^*(\lambda) = -I$$
Moreover, the weighting parameter $\gamma(\lambda)$ is chosen to match this exponential scale, such that:
$$\limsup_{\lambda\to\infty} \frac{1}{\lambda}\log \gamma(\lambda) = I$$
\end{assumption}

 This assumption ensures a balance between average-case performance and extreme risk. If the exponential growth rate of the coefficient is strictly greater than $I$, the risk penalty $\gamma(\lambda)p(\mathbf{x}^*(\lambda))$ in $\mathbf P_\lambda$ would blow up exponentially fast. This would force the optimizer to ignore the average-case objective and output an overly conservative solution. Conversely, if the growth rate is strictly less than $I$, the risk penalty asymptotically vanishes, reducing the problem to minimizing the average-case loss. In a practically meaningful risk-averse setting, the exponential growth rate of $\gamma(\lambda)$ exactly matches the LDP decay rate $I$. This requires $\gamma(\lambda)$ to scale in the order of $1/p(\mathbf{x}^*(\lambda))$, corroborating our intuition from Section 1.

We will focus on gradient-based iterative algorithms to solve $\mathbf P_\lambda$. To this end, we make an assumption on the estimation quality of the gradient:
\begin{assumption}[Unbiased and Efficient Gradient Estimators]\label{assumption:second-moment}
At any rarity level $\lambda >0$, there exist unbiased estimators $G_f(\mathbf{x},\xi)$ and $G_p(\mathbf{x},\xi; \lambda)$ for $\nabla f(\mathbf{x})$ and $\nabla p(\mathbf{x})$, respectively. Furthermore, there exists a constant $\sigma_f \ge 0$ and a rarity-dependent quantity $\sigma_p(\lambda) \ge 0$ such that, for all $\mathbf{x}\in\mathcal{X}$, we have
$\mathbb{E}\left[\big\|G_f(\mathbf{x},\xi)-\nabla f(\mathbf{x})\big\|^2\right] \le \sigma_f^2$
and
\begin{equation}
    \mathbb{E}\left[\big\|G_p(\mathbf{x},\xi; \lambda)-\nabla p(\mathbf{x})\big\|^2\right] \le \sigma_p^2(\lambda)\|\nabla p(\mathbf{x})\|^2.\label{efficiency gradient}
\end{equation}
where $\sigma_p(\lambda)$ grows at most sub-exponentially with respect to the rarity level:
\begin{equation}
    \limsup_{\lambda \to \infty} \frac{1}{\lambda} \log \sigma_p(\lambda) \le 0. \label{variance-growth}
\end{equation}
\end{assumption}


In Assumption \ref{assumption:second-moment}, the unbiasedness of $G_f$ and $G_p$ signifies a first-order gradient oracle, and this typically can be obtained by infinitesimal perturbation analysis or pathwise differentiation \citep{ho1983infinitesimal,heidelberger1988convergence}, the likelihood ratio or score function method \citep{rubinstein1986score,reiman1989sensitivity,glynn1990likelihood}, or other approaches such as measure-valued or weak differentiation \citep{heidergott2008measure,heidergott2010gradient}. On the other hand, condition \eqref{efficiency gradient} is in line with the notion of weak efficiency, or synonymously logarithmic efficiency or asymptotic optimality in the rare-event literature \citep{l2010asymptotic,bucklew2004introduction}. Specifically, this means that in estimating a rare-event probability, say $p$, using an unbiased per-run estimator, say $Y$, the ratio between the logarithms of the per-run second moment and squared mean satisfies
$\lim_{p\to0}\log E[Y^2]/\log(p^2)=1$, or equivalently $\lim_{p\to0}\log(RE)/\log p\leq0$ where $RE$ is the relative error (note that $\lim_{p\to0}\log(RE)/\log p\geq0$ because of Jensen's inequality). Under Assumption \ref{assumption:ldp-optimization} where $p$ decays exponentially in $\lambda$, this means that the relative error grows at a rate slower than exponential in $\lambda$. In contrast, recall the introduction that naive Monte Carlo has a relative error given by $\sqrt{(1-p)/p}$, which implies the relative error blows up to $\infty$ exponentially in $\lambda$. That is, weak efficiency suppresses the growth rate of relative error from exponential in naive Monte Carlo to sub-exponential and hence significantly smaller. Condition \eqref{efficiency gradient} states the same efficiency notion for the gradient instead of rare-event probabilities, using the $L_2$-norm as a natural multivariate extension of the square. Generally, if we use importance sampling as the variance reduction method (as commonly adopted), then the efficiency for the rare-event probability estimator can be translated into that of the related gradient estimator via the likelihood ratio, though this analysis will need to be conducted case-by-case. Here our focus is on the algorithmic implications, assuming we already have in hand efficient gradient estimators.

\subsection{Configurations of Stochastic Gradient Descent}


We use SGD to solve \ProblemRare, not only because of its versatility but also because it provides a natural platform to adaptively embed variance reduction in the gradient estimator. At each iteration $t$, we draw a mini-batch of $B \ge 1$ i.i.d. samples, denoted as $\xi_{t, 1:B} = \{\xi_{t,1}, \dots, \xi_{t,B}\}$. We construct the mini-batch gradient estimator for the objective $F(x; \lambda)$  as:
$$\bar{G}(\mathbf{x}_t, \xi_{t, 1:B}) = \frac{1}{B} \sum_{i=1}^{B} \left( G_f(\mathbf{x}_t, \xi_{t,i}) + \gamma(\lambda)G_p(\mathbf{x}_t, \xi_{t,i}) \right)$$
Given the current iterate $\mathbf{x}_t \in \mathcal{X}$, the projected SGD update rule is:
$$\mathbf{x}_{t+1} = \Pi_{\mathcal{X}}\left(\mathbf{x}_t - \eta_t \bar{G}(\mathbf{x}_t, \xi_{t, 1:B})\right)$$
where $\eta_t > 0$ is the step size and $\Pi_{\mathcal{X}}$ denotes the Euclidean projection onto the convex feasible domain $\mathcal{X}$.

We denote the configuration of an SGD as $(\mathcal{A}, \mathbf{x}_0)$, where $\mathbf{x}_0$ is the initial solution and $\mathcal{A}$ denotes all other SGD specifications. This notation is convenient to highlight the role of $\mathbf{x}_0$ later. Here, $\mathcal{A}$ prescribes:
\begin{enumerate}
\item A \emph{non-adaptive} step-size sequence $\{\eta_t\}_{t\ge 0}$ where step sizes are deterministically decided in advance and not dependent on the observations. Specifically, we consider the polynomially decaying step size $\eta_t = \eta_0(1+t)^{-\alpha}$ for a decay rate $\alpha \in [0,1)$. Note that setting $\alpha=0$ yields constant step size.
\item A constant mini-batch size $B \ge 1$.
\item An output method after iteration $T$ that gives either the last-iterate $\mathbf{x}_T$ or a step-size-weighted average $\bar{\mathbf{x}}_T=\frac{\sum_{t=0}^{T-1} \eta_t \mathbf{x}_{t+1}}{\sum_{t=0}^{T-1} \eta_t}.$

\end{enumerate}
For convenience, we denote $\mathbb{A}_{\alpha,\mathrm{avg}}$ and $\mathbb{A}_{\alpha,\mathrm{last}}$ as SGD schemes that use $\alpha$ in the step size specification, and average-iterate and last-iterate outputs respectively. Note that for $T$ gradient updates, the sampling effort of the scheme $\mathcal{A}$ is exactly $T \cdot B$.




\subsection{Rarity-Aware Sampling Complexity, Efficiency and Safe Start}

We define the sampling complexity for a rare-event optimization problem \ProblemRare\ explicitly in the rarity parameter $\lambda$, as follows:
\begin{definition}[Rarity-Aware Sampling Complexity]\label{def:rarity-complexity}
For an SGD configuration $(\mathcal{A}, \mathbf{x}_0)$, its sampling complexity $T^{\varepsilon,\kappa}_{\lambda}(\mathcal{A}, \mathbf{x}_0)$ is the minimum number of stochastic gradient samples required at rarity level $\lambda$ to produce an iterate $\hat{\mathbf{x}}$ satisfying:
$$F(\hat{\mathbf{x}}; \lambda) \le (1+\varepsilon)\cdot F(\mathbf{x}^*(\lambda); \lambda)$$
with probability at least $1-\kappa$.
\end{definition}
In other words, our success criterion is defined as reaching a relative accuracy $\varepsilon>0$ with confidence $1-\kappa\in (0,1)$, in terms of the attained objective value. Next, we describe algorithmic efficiency:
\begin{definition}[Efficient Configuration]\label{def:efficient-config}
A sequence of configurations $\{(\mathcal{A}(\lambda), \mathbf{x}_0(\lambda))\}_{\lambda>0}$ is called \textit{efficient} if, for any given $\varepsilon>0$ and $1-\kappa\in(0,1)$, its sampling complexity grows sub-exponentially in $\lambda$:
$$\limsup_{\lambda\to\infty}\frac{1}{\lambda} \log T^{\varepsilon,\kappa}_{\lambda}\big(\mathcal{A}(\lambda),\mathbf{x}_0(\lambda)\big) \le 0$$
\end{definition}
Analogous to Assumption \ref{assumption:second-moment}, this definition is the generalization of the weak efficiency notion in rare-event estimation \citep{l2010asymptotic} to optimization, where now the sampling complexity refers to the number of samples needed to reach an $\varepsilon$-nearly-optimal solution. Note that the sampling complexity is dependent on $\varepsilon$ and $\kappa$, which we hide for ease of presentation and their values will be apparent in the considered contexts. Now, we formally introduce safe start:
\begin{assumption}[Safe Start]\label{def:safe-start}
Under Assumption \ref{assumption:ldp-optimization} with decay rate $I$, a sequence $\{\mathbf{x}_0(\lambda)\}_{\lambda>0}$ is a sequence of \emph{safe starts}, namely it satisfies
$$\limsup_{\lambda\to\infty}\frac{1}{\lambda}\log p\big(\mathbf{x}_0(\lambda)\big)\le -I$$
\end{assumption}
That is, safe start means that the risk level of the SGD initialization is upper bounded by that of the optimal solution, in exponential scale.

\subsection{Additional Regularity Assumptions}
\label{appendix-sec:additional_assumption}

In addition to Assumptions \ref{assumption:ldp-optimization}--\ref{def:safe-start}, we also impose additional regularity assumptions on the average-case loss function $f$, the  risk term $p$, and the sequence of initializations $\prtc{\mathbf{x}_0(\lambda)}_{\lambda >0}.$

\subsubsection{Regularity Assumptions on the Average-Case Loss.}
We first introduce the following assumption that is standard in stochastic optimization.

\begin{assumption}[Smoothness and Convexity of the Average-Case Loss]\label{assumption:smooth-connvex}
$\mathcal{X}\subseteq \mathbb{R}^n$ is a convex, closed, non-empty domain. Let the average-case loss $f$ be twice continuously differentiable, convex, and $L_f$-smooth, i.e., $\|\nabla^2 f(\mathbf{x})\| \le L_f$ for all $\mathbf{x} \in \mathcal{X}$, where $L_f > 0$. 
\end{assumption}


Convexity and Lipschitz smoothness are two common, well-understood assumptions in first-order stochastic optimization \citep{nemirovski2009robust, beck2017first}. Geometrically, convexity ensures that any local minimum is a global minimum, while Lipschitz smoothness controls the rate at which the gradient changes. We note that this smoothness is imposed only on the average-case loss term. On the other hand, the sensitivity challenge in rare-event optimization is driven by the extreme-risk term, which we will discuss soon.

We next assume a coercivity condition on the average-case loss $f$, expressed through the boundedness of its sublevel set.

\begin{assumption}[Sublevel-Boundedness]\label{assumption:level-bounded}
There exist constants $\epsilon_f > 0$ and $d_f > 0$ such that the set $\{\mathbf{x} \in \mathcal{X} \mid f(\mathbf{x}) \le \epsilon_f\}$ is non-empty and has diameter at most $d_f$; that is, any $\mathbf{x}_1, \mathbf{x}_2 \in \mathcal{X}$ with $f(\mathbf{x}_1) \le \epsilon_f$ and $f(\mathbf{x}_2) \le \epsilon_f$ satisfy $\|\mathbf{x}_1 - \mathbf{x}_2\| \le d_f$.
\end{assumption}

Without coercivity, $f$ may fail to attain a minimizer, leaving the optimization ill-posed. This condition holds automatically when the feasible domain $\mathcal{X}$ is bounded, or whenever $f$ is coercive, such as the least square loss.

By the boundedness of the $\epsilon_f$-sublevel set and the continuity of $f$, the Weierstrass Extreme Value Theorem guarantees that $f$ attains a minimizer $\tilde{\mathbf{x}}^*$. Without loss of generality, we assume the objective value at this minimum is zero, which renders $f$ non-negative.

\begin{assumption}[Zero Minimum of the Average-Case Loss]\label{assumption:lower-bounded}
The objective value of $f$ at the minimum is zero, i.e., $f(\tilde{\mathbf{x}}^*) = 0$.
\end{assumption}

\subsubsection{Regularity Assumptions on the Extreme Risk.}

We introduce an assumption on the extreme risk that is central to our analysis of rare-event optimization efficiency. Unlike the global smoothness of Assumption~\ref{assumption:smooth-connvex}, we bound the magnitude of the extreme-risk gradient and Hessian \emph{relative to the extreme risk itself}. We do not impose a global Lipschitz bound on $p$, which would make the smoothness constant of $F$ scale with the exponentially large penalty $\gamma(\lambda)$.

\begin{assumption}[Relative Gradient and Hessian Bounds]\label{assumption:scale-grad-hessian}
Let the extreme risk $p: \mathcal{X} \to [0,\infty)$ be twice continuously differentiable
on $\mathcal{X}$. There exist constants $L_{p,1}, L_{p,2} > 0$ and exponents $\beta_1,\beta_2 \ge 0$
such that for all $\mathbf{x} \in \mathcal{X}$:
\begin{equation*}
\|\nabla p(\mathbf{x})\| \le L_{p,1} \prtr{1+\norm{\mathbf{x} - \tilde{\mathbf{x}}^*}^{\beta_1}} p(\mathbf{x})
\quad \text{and} \quad
\|\nabla^2 p(\mathbf{x})\| \le L_{p,2} \prtr{1+\norm{\mathbf{x} - \tilde{\mathbf{x}}^*}^{\beta_2}} p(\mathbf{x}).
\end{equation*}
\end{assumption}

This assumption captures the typical behavior of functions exponentially dependent on $\mathbf x$. Consider an extreme risk of the form $p(\mathbf{x}) = e^{\psi(\mathbf{x})}$ for some smooth $\psi$. Its derivatives are
\begin{equation*}
\nabla p = p\,\nabla\psi, \qquad
\nabla^2 p = p\prtr{\nabla\psi\,\nabla\psi^\top + \nabla^2\psi},
\end{equation*}
so that the norms
\begin{equation*}
\|\nabla p\| = p\,\|\nabla\psi\|, \qquad
\|\nabla^2 p\| \le p\prtr{\|\nabla\psi\|^2 + \|\nabla^2\psi\|}
\end{equation*}
are controlled by the risk $p$ itself, up to factors depending only on the derivatives of $\psi$. We assume these derivatives grow at most polynomially in $\|\mathbf{x}-\tilde{\mathbf{x}}^*\|$ as in  Assumption~\ref{assumption:scale-grad-hessian}. 

We recall the CVaR risk term $p(x) = \mathbb{E}[(Z - x)_+]$ from
Section~\ref{sec:intro} for a standard normal $Z$, which illustrates this assumption.
Writing $\phi$ and $\overline{\Phi}$ for the standard normal density and tail
distribution function, we have
\begin{equation*}
p(x) = \phi(x) - x\,\overline{\Phi}(x), \qquad
p'(x) = -\overline{\Phi}(x), \qquad
p''(x) = \phi(x).
\end{equation*}
By the Mills ratio expansion, $\overline{\Phi}(x) \sim \phi(x)/x$ and
$p(x) \sim \phi(x)/x^2$ as $x\to\infty$. Therefore the relative gradient and Hessian
satisfy
\begin{equation*}
\frac{|p'(x)|}{p(x)} = \frac{\overline{\Phi}(x)}{p(x)} \sim x, \qquad
\frac{|p''(x)|}{p(x)} = \frac{\phi(x)}{p(x)} \sim x^2,
\end{equation*}
so the relative gradient and Hessian norms grow linearly and quadratically in $x$,
i.e., $\beta_1 = 1$ and $\beta_2 = 2$, aligning with
Assumption~\ref{assumption:scale-grad-hessian}.

Finally, we assume the combined objective of the average-case loss and the extreme risk remains convex, guaranteeing global optimality. The CVaR formulation above is again an example, as its combined objective is convex in $x$.

\begin{assumption}[Convexity of the Combined Objective]\label{assumption:connvex}
For any rarity level $\lambda > 0$, the combined objective $F(\cdot; \lambda)$ is convex over the convex domain $\mathcal{X}$.
\end{assumption}

\subsubsection{Regularity Assumption on Initializations.}

To isolate the sensitivity challenge specific to rare-event optimization, we rule out
the pathological case where an initialization sits exponentially far from the optimal
solution.

\begin{assumption}[Sub-Exponential Initial Distance]\label{assumption:init-distance}
The sequence of initializations $\{\mathbf{x}_0(\lambda)\}_{\lambda>0}$ has a sub-exponential initial distance, meaning that its distance to the average-case minimizer $\tilde{\mathbf{x}}^*$ grows at most sub-exponentially:
\[
\limsup_{\lambda\to\infty}\frac{1}{\lambda}\log \|\mathbf{x}_0(\lambda)-\tilde{\mathbf{x}}^*\| \le 0.
\]
\end{assumption}

We consider the distance to the average-case minimizer $\tilde{\mathbf{x}}^*$ rather than the optimum $\mathbf{x}^*(\lambda)$ to the rare-event optimization problem $\ProblemRare$. This is because $\tilde{\mathbf{x}}^*$ is fixed with respect to the rarity level $\lambda$, and thus the condition above can be verified more readily by merely solving the average-case-only formulation. 
Moreover, combining with other assumptions, we can show in Lemma \ref{lemma:subexponential-constants} that the sequence of initializations satisfying this assumption also has a sub-exponential distance to the optimal solution $\mathbf{x}^*(\lambda)$.



\section{Main Results: Safe Start with Variance Reduction is Necessary and Sufficient for Rare-Event Optimization Efficiency}
\label{sec:main}


We present our main results characterizing the conditions for optimization efficiency under extreme risk. We first establish that, given efficient gradient estimators, a safe start is sufficient to guarantee sub-exponential sampling complexity across standard SGD algorithms.

\begin{theorem}[Sufficiency of Safe Start to Attain Efficiency]\label{thm:main}
For a sequence of optimization instances $\{\ProblemRare\}_{\lambda>0}$ with $\mathcal{X}=\mathbb{R}^n$, suppose Assumptions \ref{assumption:ldp-optimization} to \ref{assumption:init-distance} hold with a sequence of safe starts $\{\mathbf{x}_0(\lambda)\}_{\lambda>0}$. Then, there exists a sequence of SGD schemes $\{\mathcal{A}(\lambda)\}_{\lambda>0}$, drawn exclusively from either $\mathbb{A}_{\alpha,\mathrm{last}}$ or $\mathbb{A}_{\alpha,\mathrm{avg}}$ for any fixed $\alpha \in [0,1)$, such that the sequence of optimization configurations $\{(\mathcal{A}(\lambda), \mathbf{x}_0(\lambda))\}_{\lambda>0}$ is efficient in the sense of Definition \ref{def:efficient-config}.
\end{theorem}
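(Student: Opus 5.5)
The plan is to show that, starting from a safe start, SGD with a suitably small step size never leaves a ``safe'' sublevel region with high probability. On that region $F$ behaves like a convex function whose smoothness constant and gradient noise grow only sub-exponentially in $\lambda$. Classical convex SGD bounds then give sub-exponential complexity.

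\textbf{Step 1: sub-exponential reference constants.} Define the sublevel set $\mathcal{L}_\lambda=\{\mathbf{x}: F(\mathbf{x};\lambda)\le F(\mathbf{x}_0(\lambda);\lambda)\}$ (or a slight enlargement of it). I would control three things on $\mathcal{L}_\lambda$. First, $F(\mathbf{x}_0;\lambda)=f(\mathbf{x}_0)+\gamma p(\mathbf{x}_0)$ is $e^{o(\lambda)}$. This uses the safe start assumption, Assumption \ref{assumption:ldp-optimization}, and $L_f$-smoothness of $f$ together with Assumption \ref{assumption:init-distance}. Second, the diameter of $\mathcal{L}_\lambda$ is $e^{o(\lambda)}$. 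This follows from convexity and sublevel-boundedness of $f$ (Assumption \ref{assumption:level-bounded}), since $f\le F$ and a convex function with a bounded sublevel set grows at least linearly outside it. This is in the spirit of Lemma \ref{lemma:subexponential-constants}, which also gives $\|\mathbf{x}_0-\mathbf{x}^*(\lambda)\|=e^{o(\lambda)}$. Third, on $\mathcal{L}_\lambda$ we have $\gamma p(\mathbf{x})\le F(\mathbf{x}_0;\lambda)$. By Assumption \ref{assumption:scale-grad-hessian}, the local smoothness constant $L_\lambda=L_f+\gamma L_{p,2}(1+D^{\beta_2})\sup p$ and the gradient bound $\gamma\|\nabla p\|$ are therefore $e^{o(\lambda)}$. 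By Assumption \ref{assumption:second-moment}, the mini-batch noise variance $\sigma_f^2+\gamma^2\sigma_p^2\|\nabla p\|^2$ is $e^{o(\lambda)}$ as well. The key point is that the safe start prevents $\gamma p$ from ever being exponentially large on the region the iterates visit.

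\textbf{Step 2: confinement.} I would show that, with probability at least $1-\kappa/2$, all iterates up to $T$ stay in an enlarged region $\mathcal{L}'_\lambda=\{F\le 2F(\mathbf{x}_0)+c\}$. Use the descent inequality on that region: for $\eta_t\le 1/L'_\lambda$,
\[
F(\mathbf{x}_{t+1})\le F(\mathbf{x}_t)-\tfrac{\eta_t}{2}\|\nabla F(\mathbf{x}_t)\|^2+\eta_t\langle\nabla F,\nabla F-\bar G\rangle+L'_\lambda\eta_t^2\|\bar G-\nabla F\|^2 .
\]
Stop the process at the exit time from $\mathcal{L}'_\lambda$. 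The stopped martingale and the accumulated noise term $\sum_t L'\eta_t^2\,\mathrm{Var}$ are then bounded via Doob's maximal inequality. Choosing $\eta_0=e^{-o(\lambda)}$ small enough relative to $T$ makes the noise sum at most $F(\mathbf{x}_0)$ with high probability. A single step cannot jump out of the region either: the step length is at most $\eta_t\|\bar G\|$, and Chebyshev applied to $\eta_t\|\bar G\|$ together with a union bound over $t$ handles this. \textbf{This is the main obstacle}, because the smoothness of $F$ holds only locally, so the exit event and the per-step overshoot must be handled simultaneously while keeping every constant sub-exponential. I would first try the stopped supermartingale $F(\mathbf{x}_{t\wedge\tau})+\sum_{s\ge t}L'\eta_s^2\sigma^2$ together with Ville's inequality.

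\textbf{Step 3: convergence and complexity.} On the confinement event, run the standard convex SGD analysis of $\|\mathbf{x}_t-\mathbf{x}^*\|^2$ with projection onto $\mathbb{R}^n$ (i.e., no projection), using Assumption \ref{assumption:connvex}. This gives $\mathbb{E}[F(\bar{\mathbf{x}}_T)-F^*]\lesssim (D^2+\sigma^2\sum\eta_t^2)/\sum\eta_t$ for the averaged iterate. For the last iterate, use the Shamir--Zhang type bound, which costs an extra $\log T$ factor. With $\eta_t=\eta_0(1+t)^{-\alpha}$ and $\alpha\in[0,1)$ fixed, the choices $\eta_0=e^{-o(\lambda)}$, $B=1$, and $T$ polynomial in $e^{o(\lambda)}/(\varepsilon\kappa F^*)$ suffice. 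Here $F^*\ge\gamma p^*$, and $\gamma p^*$ is bounded below by $e^{-o(\lambda)}$ by Assumption \ref{assumption:ldp-optimization}; I may need to choose $\gamma$ along the limsup subsequence or use $F^*\ge\gamma p^*$ directly. Applying Markov's inequality on the good event then yields relative error $\varepsilon$ with probability $1-\kappa$. Since every factor entering $T$ is $e^{o(\lambda)}$, we get $\limsup\frac1\lambda\log T\le0$, which is efficiency in the sense of Definition \ref{def:efficient-config}.
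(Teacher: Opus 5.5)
Your Step 1 matches the paper's Lemmas~\ref{lem:safe-region-regularity}--\ref{lemma:subexponential-constants}, but your Steps 2--3 take a genuinely different route. The paper keeps the step size at the local-smoothness scale, $\eta_0\in[1/(20L_S),1/(2L_S)]$, and puts all the noise control into the mini-batch. With $B(\lambda)$ of order $T\sigma_S^2/\kappa$ (times inverse thresholds), Markov's inequality and a union bound make the noise at \emph{every} step smaller than $\min\{\varepsilon F^*L_S/4,\,d_f^2L_S^2,\,\varepsilon^2(F^*)^2/(16B_S^2)\}$. On that event, a deterministic first-boundary-crossing argument along $[\mathbf{x}_t,\mathbf{x}_{t+1}]$ gives containment plus either a fixed decrement or arrival at the target (Lemmas~\ref{lemma:descent-lemma-last}--\ref{lemma:upperbound-iteration}). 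The last iterate therefore needs no last-iterate SGD theory. For averaging, the paper telescopes $\mathbb{E}[(F(\mathbf{x}_{t+1})-F^*)\mathbb{I}_{\mathrm{safe},t+1}]$, using that the smoothness right-hand side is nonnegative whenever a step exits $S$ (Lemma~\ref{lemma:next-step-eog}). It then adds a per-step escape bound $\sigma_S V_S$ (Lemma~\ref{lemma:upper-bound-escape}), at the price of $B$ of order $\sigma_S^2T^2V_S^2/\kappa^2$. You instead keep $B=1$ and let small step sizes average out the noise over time. Containment comes from maximal inequalities on the cumulative martingale and noise terms, together with a buffer region and a Chebyshev bound on step lengths. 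This is sound and shows that mini-batching is not what drives efficiency. The paper's route buys elementary per-step arguments; yours buys $B=1$ and standard convex SGD rates, at the cost of heavier probabilistic bookkeeping.

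A few things need tightening.

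(i) Ville's inequality alone bounds the exit probability from $\{F\le a\}$ only by about $F(\mathbf{x}_0)/a$. That is roughly $1/2$ for $a=2F(\mathbf{x}_0)+c$ with modest $c$. Either take $a$ of order $F(\mathbf{x}_0)/\kappa$, or use Doob's $L^2$ bound on $\sum_t\eta_t\langle\nabla F(\mathbf{x}_t),\mathbf{w}_t\rangle$.

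(ii) You cannot run the convex analysis conditionally on the confinement event, because that event depends on future noise and destroys unbiasedness. Use stopped processes or indicators as the paper does. Alternatively, analyze an auxiliary SGD projected onto the convex confinement region, which coincides with your iterates on that event.

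(iii) Shamir--Zhang is stated for projected SGD with $\eta_t\propto t^{-1/2}$. For general $\alpha$ it is easier to use smoothness. With $\eta_t\le 1/L$, on the good event $F(\mathbf{x}_T)\le\min_{1\le s\le T}F(\mathbf{x}_s)+2\max_t|M_t|+N_T$, where $M$ is the martingale part and $N$ the noise sum. The minimum is at most the step-weighted average, so it suffices to control the fluctuations at scale $\varepsilon F^*$ rather than $F(\mathbf{x}_0)$, which is still sub-exponential.

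(iv) The bound $\gamma p^*\ge e^{-o(\lambda)}$ does not follow from a $\limsup$ on $\gamma$, but $F^*$ is bounded below anyway. The function $p$ is continuous, $\lambda$-independent and strictly positive (Assumption~\ref{assumption:scale-grad-hessian} makes $\log p$ locally Lipschitz). Hence $p^*\to0$ forces $\mathbf{x}^*(\lambda)$ out of the compact set $\{f\le\epsilon_f\}$, so $F^*\ge f(\mathbf{x}^*(\lambda))>\epsilon_f$ for large $\lambda$. The paper's ``denominators bounded away from zero'' rests on the same fact.
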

Theorem \ref{thm:main} stipulates that, if we start safe and have in place an efficient gradient estimator, then we can tune SGD, either average-iterate or last-iterate, such that the algorithm is efficient. Moreover, the step size can be constant or polynomially decaying, and specified in advance. The existence of the efficient schemes in Theorem \ref{thm:main} is constructive: for any rarity level $\lambda$, we can find a mini-batch size $B_\lambda$, an iteration horizon $T_\lambda$, and a step size constant $\eta_0$ to achieve a target relative accuracy $\varepsilon$ with a probability at least $1-\kappa$, for any prescribed $\varepsilon$ and $1-\kappa$, using sub-exponential sampling cost. 


The main argument of Theorem \ref{thm:main} relies on the geometric insight that safe start places the SGD to a well-behaved initial \emph{sublevel set}, defined in our later technical developments in Section \ref{subsubsec:safe-landscape} as $S(x_0, \lambda, c) := \{x \in \mathcal{X} \mid F(x; \lambda) \le c \cdot F(x_0; \lambda)\}$ for a constant $c \ge 1$. Both the local Lipschitz constant and the variance of the gradient estimator are controlled within $S$. These desirable properties allow the SGD to stably traverse to the optimal solution without triggering an exponential explosion in the required number of iterations. These behaviors are explained in detail for $\mathbb{A}_{\alpha,\mathrm{last}}$ and $\mathbb{A}_{\alpha,\mathrm{avg}}$ respectively through 
Proposition~\ref{lemma:last-iterate-sufficiency} in Section~\ref{subsubsec:last-iterate} and Proposition~\ref{lemma:avg-iterate-sufficiency} in
Section~\ref{subsubsec:average-iterate} in the sequel. Conversely, if we do not apply safe start, then we could encounter poor complexity due to the ultra-sensitive objective landscape, as shown next. 



\begin{theorem}[Necessity of Safe Start to Attain Efficiency]\label{thm:unsafe}
Suppose Assumptions \ref{assumption:ldp-optimization}--\ref{assumption:second-moment} and \ref{assumption:smooth-connvex}--\ref{assumption:connvex} hold. There exists a relative accuracy $\epsilon>0,$ a confidence level $1-\kappa \in(0,1)$, a sequence of optimization instances $\{\ProblemRare\}_{\lambda>0}$, and a sequence of unsafe starts $\{\mathbf{x}_0(\lambda)\}_{\lambda>0}$ satisfying Assumption \ref{assumption:init-distance}, such that for any sequence of (projected) SGD schemes $\{\mathcal{A}(\lambda)\}_{\lambda>0}$ chosen from the universal class of all monotonically diminishing step sizes ($\eta_t \ge \eta_{t+1}$ for all $t \ge 0$), the sampling complexity grows exponentially:
\[
  \limsup_{\lambda\to\infty}\frac{1}{\lambda} \log T^{\varepsilon,\kappa}_{\lambda}\big(\mathcal{A}(\lambda),\mathbf{x}_0(\lambda)\big) > 0.
\]
\end{theorem}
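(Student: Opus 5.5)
The proof is a construction. I take the one-dimensional extreme quantile example from the introduction, with $\mathcal{X}=\mathbb{R}$ (or a large interval), a standard normal scaled by rarity, $\xi=Z$, and $f(x)=\tfrac12 x^2$ or $f(x)=x$ restricted suitably. Concretely, I use $f(x)=\tfrac12(x-1)^2$, which is convex, smooth and coercive with minimum $0$ at $\tilde x^*=1$. For the risk term I use $p(x)=e^{-\lambda x}$ on a bounded convex domain, say $\mathcal{X}=[0,M]$, and set $\gamma(\lambda)=e^{\lambda c}$. Then $p$ satisfies Assumption \ref{assumption:scale-grad-hessian} only if its derivatives are $\lambda$-free relative to $p$. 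To get this, I instead rescale so that $p(x)=e^{-\lambda I}e^{-(x-\tilde x^*)}$-type terms carry the rarity in a constant factor, while the relative gradient stays $O(1)$.

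A cleaner choice keeps the landscape sensitivity. Take $p(x)=\exp(-\lambda\, h(x))$ with the $\lambda$-dependence absorbed by letting the decision scale as $x\sim\lambda$. Let $p(x)=e^{-x}$, $f(x)=\tfrac{x^2}{2\lambda}$-like (or $f$ linear on a bounded domain), and $\gamma=e^{\lambda I}$. The optimum then sits at $x^*\approx\lambda I$ with $p^*\approx e^{-\lambda I}$, so Assumption \ref{assumption:ldp-optimization} holds. The gradient estimator is exact, or has $\sigma_p=1$, so Assumption \ref{assumption:second-moment} holds. I choose the unsafe start $x_0=0$, or $x_0=\lambda I/2$, which has polynomial distance and hence satisfies Assumption \ref{assumption:init-distance}. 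I adjust $f$ to satisfy Assumption \ref{assumption:smooth-connvex}; for example $f(x)=L_f$-smooth Huber-like so that $f'$ is bounded by a constant on the relevant range.

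The core is a dichotomy argument on the step sizes. At the unsafe start, $|F'(x_0)|\approx\gamma e^{-x_0}=e^{\lambda I/2}$, which is exponentially large. In Case 1, $\eta_0\ge e^{-\lambda\delta}$ for a small $\delta<I/2$. The first step jumps by at least $\eta_0\gamma e^{-x_0}$, which is $\ge e^{\lambda(I/2-\delta)}$, far past $x^*$ into the region where $F'\approx f'$ is $O(1)$. The projection onto $\mathcal{X}$ does not rescue this if $\mathcal{X}$ is large. From there, returning to within $\varepsilon$-optimality needs distance $e^{\lambda(I/2-\delta)}-\lambda I$ traversed with per-step movement at most $\eta_t\sup|f'|\le\eta_0 C$. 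I need to control $\eta_0$ from above as well: if $\eta_0$ is huge the return jump may also be large. Step sizes are nonincreasing, so I bound the total travel against the objective gap. The cleanest route is to put $f'$ bounded, say $f$ linear-Huber, so that the number of steps is at least $e^{\lambda(I/2-\delta)}/(C\eta_0)$. I then handle huge $\eta_0$ by noting that the overshoot also scales with $\eta_0$: it is at least $\eta_0 e^{\lambda I/2}$, so the ratio is $\ge e^{\lambda I/2}/C$ regardless. Since $F$ far away is $\gg(1+\varepsilon)F^*$, the output fails until then, and for the averaged output the overshoot dominates the weighted average.

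In Case 2, $\eta_0<e^{-\lambda\delta}$. In the region $[x_0,x^*-\lambda\delta']$ the gradient is at most $e^{\lambda(I-\delta'-\ldots)}$-controlled, and each step moves at most $\eta_t|F'(x_t)|$. Since $|F'|$ decreases as $x$ grows toward $x^*$ (the exponential decays), the iterate needs to cross a region where $\gamma e^{-x}\le$ something. I have not yet made the Case 2 bound fully rigorous: moving from $x_0$ to $x^*-1$ costs $\sum\eta_t\gamma e^{-x_t}$, and near $x^*$ the gradient is $O(1)$, so each step there moves only $\eta_0 O(1)\le e^{-\lambda\delta}$. Covering an $O(1)$ distance therefore needs $e^{\lambda\delta}$ iterations. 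I still need to show that reaching $(1+\varepsilon)$-optimality requires entering an $O(1)$ neighborhood of $x^*$ from the left, which follows by convexity since $F(x^*-a)\ge F^*+\Omega(a)$ given $F^*=\Theta(\lambda)$; I may need to pick $\varepsilon$ small or shift $f$ so that $F^*=O(1)$. Combining the two cases with $\delta$ fixed yields $\limsup\frac1\lambda\log T\ge\min(\delta,I/2)>0$. Noise can be taken as zero, or bounded, giving $\kappa$ any value below 1, and the batch size only multiplies $T$.

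The main obstacle is the Case 1/Case 2 threshold. It has to handle simultaneously every diminishing schedule, including large $\eta_0$ with rapid decay. For this I would argue directly on the first iterate where the step size falls below $e^{-\lambda\delta}$, and treat the iterations before and after that point separately.
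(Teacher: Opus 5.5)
\textbf{There is a genuine gap: a one-dimensional instance cannot be a counterexample here.}

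In Theorem~\ref{thm:unsafe} the start $\mathbf{x}_0(\lambda)$ is fixed \emph{before} the scheme $\mathcal{A}(\lambda)$ is chosen, so $\eta_0(\lambda)$ may be tuned to the start. In 1D, convexity forces $-F'(x_0)$ to point toward $x^*(\lambda)$. So $\eta_0(\lambda)=(x^*(\lambda)-x_0(\lambda))/|F'(x_0(\lambda);\lambda)|$ lands the first iterate exactly on the optimum. Concretely, take $F(x;\lambda)=x+e^{\lambda-x}$ and $x_0=\lambda/2$. The choice $\eta_0=\frac{\lambda/2}{e^{\lambda/2}-1}$ gives $x_1=\lambda=x^*$, hence $T^{\varepsilon,\kappa}_\lambda=1$ for every $\varepsilon,\kappa$.

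This $\eta_0$ is exponentially small, so it lies in your Case~2, and Case~2 is therefore false. The iterate never has to crawl through a neighbourhood of $x^*$ where $|F'|=O(1)$. It jumps into the target set from the far left, where the gradient is exponentially large. Separately, the $(1+\varepsilon)$-optimal set extends $\Theta(\log\lambda)$ to the left of $x^*$, where $|F'|=\Theta(\lambda)$ rather than $O(1)$.

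Noise allowed by Assumption~\ref{assumption:second-moment} would not rescue your instances either. A sub-exponential mini-batch makes the tuned first step land in the target set with high probability. Note the contrast with Proposition~\ref{prop:highvariance-instance}: its 1D instance is admissible because there the start is chosen \emph{after} the step size. Conversely, your worry about a large $\eta_0$ followed by fast decay is not the real obstacle. The overshoot happens at $t=0$, and every later move back is capped by $C\eta_t\le C\eta_0$, so splitting on $\eta_0$ alone suffices.

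\textbf{The missing idea.} You need a geometry in which the ray $\{\mathbf{x}_0-\eta\nabla F(\mathbf{x}_0):\eta>0\}$ misses the $(1+\varepsilon)$-optimal set, so no single tuned step succeeds. You also need a coordinate along which the risk gradient only pushes \emph{away} from the target, so all progress there must come from $\nabla f$. The paper does this in two dimensions:
\begin{itemize}
\item $\mathcal{X}=\mathbb{R}^2_{\ge0}$, $f(u,v)=u^2+v$, $p=e^{-u-v}$, $\gamma(\lambda)=e^{\lambda}$, $\mathbf{x}_0=(\lambda/4,0)$, $\varepsilon=1$, with $F^*=\lambda+3/4$.
\item Success forces $v_\tau\le 2\lambda+1.5$, because $F\ge v$.
\item Success also forces $u_\tau<\lambda/5$, because $\min_v F\ge u^2-u+\lambda+1$.
\item Since $\nabla_uF=2u-e^{\lambda-u-v}\le 2u$ and projection only raises coordinates, $u_{t+1}\ge(1-2\eta_t)u_t$. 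Hence if $\eta_0<e^{-\lambda/2}$, then $\tau>1/(10\eta_0)>e^{\lambda/2}/10$.
\item If $\eta_0\ge e^{-\lambda/2}$, then $v_1=\eta_0(e^{3\lambda/4}-1)$. Since $\nabla_vF\le 1$, each return step is capped at $\eta_t\le\eta_0$, forcing $\tau\ge e^{3\lambda/4}-(2\lambda+1.5)e^{\lambda/2}$.
\end{itemize}
Your Case~1 is essentially the $v$-coordinate half of this argument. What you must add is the $u$-coordinate, which is what defeats the tuned single step: along the first-step ray, $u$ only increases from $\lambda/4$. The weighted-average output then inherits both coordinatewise lower bounds, as you anticipated.
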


Theorem \ref{thm:unsafe} shows that, if we do not use safe start, then it is possible that no matter what SGD configurations we use, we run into exponential sampling complexity. Note that these configurations can include the broader classes of SGD schemes with monotonically diminishing step sizes, not only polynomially decaying.

Here, with an unsafe start, the initial gradient's magnitude is exponentially large. Consequentially,  if the step size is not exponentially small, an SGD update can cause an overshoot, with the iterate thrown exponentially away from the optimum. Returning from this exponential distance requires exponential iterations. Conversely, if the step size is exponentially small to offset with this large gradient, the algorithm progresses at a slow speed that takes exponentially many iterations to reach the optimum. Note that this dichotomic failure of convergence occurs even in a deterministic setting with zero gradient noise. This highlights the root of the issue as coming from the extreme sensitivity of the landscape geometry, rather than the noise incurred in gradient estimation. Proposition~\ref{prop:unsafe-instance} in Section \ref{subsec:nec-safe-start} provides an explicit instance that captures precisely these dynamics underlying the argument for Theorem~\ref{thm:unsafe}.


 



Finally, we show that, in addition to safe start, efficient variance-reduced gradient estimator is also crucial in guaranteeing efficiency:


\begin{theorem}[Necessity of Efficient Gradient Estimators]\label{thm:highvariance}
Suppose Assumptions \ref{assumption:ldp-optimization} and \ref{assumption:smooth-connvex}--\ref{assumption:connvex} hold, and Assumption \ref{assumption:second-moment} is relaxed so that the gradient estimator satisfies only a standard absolute variance bound $\mathbb{E}[\|G_p(\mathbf{x},\xi;\lambda)-\nabla p(\mathbf{x})\|^2] \le \mathcal{O}(\|\nabla p(\mathbf{x})\|)$. There exists a relative accuracy $\epsilon>0,$ a confidence level $1-\kappa \in(0,1)$, and  a sequence of optimization instances $\{\ProblemRare\}_{\lambda>0}$ such that, for any sequence of (projected) SGD schemes $\{\mathcal{A}(\lambda)\}_{\lambda>0}$ chosen from $\mathbb{A}_{0,\mathrm{last}}$, there exists a sequence of safe starts $\{\mathbf{x}_0(\lambda)\}_{\lambda>0}$ satisfying Assumptions \ref{def:safe-start} and \ref{assumption:init-distance} under which the sampling complexity grows exponentially:
\[
  \limsup_{\lambda\to\infty}\frac{1}{\lambda} \log T^{\varepsilon,\kappa}_{\lambda}\big(\mathcal{A}(\lambda),\mathbf{x}_0(\lambda)\big) > 0.
\]
\end{theorem}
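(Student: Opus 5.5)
We prove Theorem \ref{thm:highvariance} with an explicit one-dimensional CVaR/quantile instance, in the spirit of the illustrative example in Section \ref{sec:intro}. Take $\mathcal{X}=\mathbb{R}$, $f(x)=(x-a)^2/2$ (or any fixed smooth convex coercive loss), and $p(x)=\mathbb{E}[(Z_\lambda-x)_+]$ with $Z_\lambda=\sqrt{\lambda}Z$, where $Z$ is standard normal. Choose $\gamma(\lambda)$ so that $x^*(\lambda)$ is of order $\sqrt{\lambda}$ and $p^*(\lambda)=e^{-I\lambda+o(\lambda)}$, which gives Assumption \ref{assumption:ldp-optimization}. Assumptions \ref{assumption:smooth-connvex}--\ref{assumption:connvex} hold by the Mills-ratio computations already given in Section \ref{sec:setting} (after rescaling). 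For the gradient oracle we use the crude estimator $G_p(x,\xi)=-\mathbf{1}\{Z_\lambda>x\}$. It is unbiased, and its variance $\overline{\Phi}(x/\sqrt\lambda)(1-\overline{\Phi}(x/\sqrt\lambda))\le|p'(x)|$, so it meets the relaxed absolute bound $\mathcal{O}(\|\nabla p\|)$. Its relative error, however, is of order $|p'(x)|^{-1/2}$, which is exponentially large near $x^*$.

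The safe start is chosen depending on the scheme: $x_0(\lambda)=x^*(\lambda)+\delta$ for a fixed $\delta>0$. This satisfies Assumption \ref{def:safe-start} because $p$ is decreasing, and it satisfies Assumption \ref{assumption:init-distance} since the distance is polynomial in $\lambda$. The mechanism is as follows. In any iteration where some mini-batch sample hits the rare set $\{Z_\lambda>x_t\}$, the update adds at least $\eta_0\gamma(\lambda)/B$ in the negative direction. If no sample hits, the update is only $-\eta_0 f'(x_t)$, a drift toward $a$ whose size does not scale with $\gamma$. Since $\eta_t\equiv\eta_0$ under $\mathbb{A}_{0,\mathrm{last}}$, we split into cases on the size of $\eta_0\gamma(\lambda)/B$.

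\emph{Case 1: $\eta_0\gamma/B\ge e^{c\lambda}\cdot$poly for some $c>0$.} A single hit throws the iterate an exponential distance $\eta_0\gamma/B$ to the left, where $p$ is huge and $F(x)\ge\gamma p(x)\gg(1+\varepsilon)F^*$. A hit occurs in one iteration with probability about $B\,p'(x_t)$, and $p'(x_t)\approx e^{-I\lambda}$ near $x^*$. The iterate then needs exponentially many steps to recover, because steps at the far-left point are dominated by the $\gamma$-term in a way that keeps overshooting. We therefore either show the total samples $TB$ must be exponential to have reached $x^*$ and stayed near it, or show that, with probability bounded below, the last iterate sits in a bad region.

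\emph{Case 2: $\eta_0\gamma/B\le e^{o(\lambda)}$.} Then $\eta_0\le Be^{-I\lambda+o(\lambda)}$. Between hits the motion is a deterministic drift of size $\eta_0|f'|$, which is at most polynomial in $\lambda$ times $\eta_0$. To move from $x^*+\delta$ to where $F\le(1+\varepsilon)F^*$, the iterate must travel a fixed distance. We choose $\delta$ large enough that $F(x_0)>(1+\varepsilon)F^*$, using the fact that $f$ grows away from $x^*$ once $x^*\ne a$. Covering that distance requires $T\gtrsim\delta/(\eta_0\,\mathrm{poly})$ steps. Hits only push leftward toward the optimum, but each pushes by at most $\eta_0\gamma/B$ and they occur at rate $Bp'$; their expected contribution over $T$ steps is $T\eta_0\gamma|p'|$. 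This total becomes the full distance only once $TB\gtrsim e^{(\cdot)\lambda}$ when the ratio is balanced. We must take care that the balancing regime gives exponential complexity through either the rarity of hits or the smallness of steps.

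The main obstacle is the intermediate regime, where $\eta_0\gamma/B$ is comparable to the distance $\delta$ and the batch size $B$ is allowed to be exponential. Here we must argue via the total sample count $TB$ rather than $T$. Because each sample hits with probability $e^{-I\lambda}$, any algorithm whose output depends on the rare-event term must observe at least one hit with probability $1-\kappa$. Hence $TB\ge \kappa' e^{I\lambda-o(\lambda)}$: if no hit occurs, the trajectory is exactly gradient descent on $f$ alone, which moves toward $a$. That trajectory does not land in the $(1+\varepsilon)$-optimal region for $F$ when $x_0$ is chosen with $\delta$ sized appropriately relative to $\eta_0$, which the adversarial choice of $x_0$ permits. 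This ``no-hit implies wrong answer'' argument is what I would try first, since it might unify all cases, with Case 1 handling the trajectories that do see hits.
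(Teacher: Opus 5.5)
\textbf{There is a genuine gap.} For your instance and estimator, some scheme is efficient from every safe start of the form $x^*+\delta$, so no case split on $\eta_0\gamma/B$ can close the argument. Three things go wrong.

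\emph{The sign is reversed.} Since $\nabla p(x)=-\overline{\Phi}(x/\sqrt{\lambda})<0$, a mini-batch with $N$ hits adds $+\eta_0\gamma N/B$ to the iterate. It throws the iterate to the \emph{right}, where $p$ is negligible, not to the left where $p$ is huge.

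\emph{There is no exponential recovery time.} With $f(x)=(x-a)^2/2$ the no-hit map is $x_{t+1}-a=(1-\eta_0)(x_t-a)$. So for $\eta_0\in(0,1)$ an overshoot of size $e^{O(\lambda)}$ is undone in $O(\lambda/\eta_0)$ iterations.

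\emph{An efficient scheme exists.} Take $B=1$ and a constant $\eta_0$ with $1-\eta_0\ge(1+\varepsilon/2)^{-1/2}$. For $x\ge x^*$ you have $\gamma p(x)\le\gamma p(x^*)=o(F^*)$. Hence for large $\lambda$ the $(1+\varepsilon)$-optimal set contains $\{x\ge x^*:\ x-a\le(1+\varepsilon/2)^{1/2}(x^*-a)\}$. A decreasing geometric sequence in $x_t-a$ with ratio at least $(1+\varepsilon/2)^{-1/2}$ cannot jump over this interval. So from $x_0=x^*+\delta$ the no-hit trajectory enters the target within $O(\log(\lambda+\delta)/\eta_0)=e^{o(\lambda)}$ iterations, for every $\delta$ allowed by Assumption \ref{assumption:init-distance}. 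Each sample drawn at some $x_t\ge x^*$ hits with probability at most $|p'(x^*)|\le e^{-I\lambda+o(\lambda)}$. Hence no hit occurs along the way with probability $1-o(1)$.

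This is exactly where your ``no hit implies wrong answer'' step fails. Plain gradient descent on $f$ already sweeps through the target, and an adversarial $\delta$ can make it skip the target only when $\eta_0$ is large, which the scheme need not choose. Moving $x_0$ slightly left of $x^*$ (still safe) does not help under the first-passage criterion the paper uses. The iterate drifts toward $a$, is hit, is thrown right, and returns geometrically through the target in polynomial time.

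The root cause is that crude Monte Carlo noise is rare-jump noise, essentially invisible on the safe side, so a safe start never has to confront it. The relaxed variance bound holds, but the estimator is too benign where it matters. The paper's instance is built to close exactly these escape routes:
\begin{enumerate}
\item It uses $f(x)=x$ on $\mathcal{X}=\mathbb{R}_{\ge0}$. Every return step is then at most $\eta$, and an overshoot to the left is projected to $0$ and then thrown to $\eta(e^{\lambda}-1)$.
\item It uses a custom estimator meeting the relaxed bound, with noise $\pm e^{-x/2}$. This noise occurs with probability $1/2$, not rarely, and is exponentially larger than the mean $e^{-x}$. It is confined to a window $\mathcal{H}_\lambda$ on the safe side, between the target threshold and the safe starts.
\item The safe start is chosen as a function of $\eta$. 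Slow traversal handles tiny $\eta$, and projection followed by overshoot handles huge $\eta$. For intermediate $\eta$, continuity of the deterministic first step gives a start with $x_1\in\mathcal{H}_\lambda$. The noise then sends $x_2$ to $0$ or exponentially far right unless $B$ is exponential.
\end{enumerate}
Your Case 2 (small $\eta_0\gamma/B$, giving $TB\ge e^{I\lambda-o(\lambda)}$) is fine, but the rest needs an estimator or instance of this kind.

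Two minor points. First, $Z_\lambda=\sqrt{\lambda}Z$ makes $x^*\asymp\lambda$ rather than $\sqrt{\lambda}$, and it makes $p$ depend on $\lambda$, whereas in $\mathbf{P}_\lambda$ only $\gamma(\lambda)$ varies; use a fixed $Z$ with $\gamma=e^{I\lambda}$. Second, $\delta$ cannot be a fixed constant, since $F(x_0)>(1+\varepsilon)F^*$ forces $\delta$ to grow proportionally to $x^*-a$.
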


Theorem \ref{thm:highvariance} shows that, even if we enforce safe start, without efficient gradient estimator, we can run into problem instances with poor sampling complexity no matter how we tune the SGD. This failure arises from combined effects of ultra-sensitivity of the objective and the exploding gradient noise. That is, step sizes that are too large cause SGD trajectories to overshoot, while those that are too small traverse slowly. Even if an intermediate step size is selected and initialized from a safe start, the high-variance gradient estimator prevents the convergence of SGD, unless an exponentially large batch size is used to mitigate this noise. In all these cases, the sampling requirements become exponential. This necessity of efficient gradient estimation can be viewed as the sampling-complexity counterpart of the asymptotic study in \citet{he2024adaptive} that argues the need of running adaptive variance reduction for rare-event optimization. At a high level, Theorem \ref{thm:highvariance} confirms a similar requirement as \citet{he2024adaptive} at the level of finite-time performance instead of only asymptotic convergence. Like Theorem~\ref{thm:unsafe}, we provide an explicit instance to elicit these dynamics that lead to Theorem~\ref{thm:highvariance} in Proposition \ref{prop:highvariance-instance} in Section \ref{subsec:nec-safe-start}.

In summary, Theorems~\ref{thm:unsafe} and~\ref{thm:highvariance} isolate two challenges for SGD under extreme risk: the ultra-sensitivity of the rare-event objective and the exploding relative variance of its gradient estimator. Theorem~\ref{thm:main} shows these challenges can be addressed, respectively, by a safe start and adaptive variance reduction. The analysis establishing these results is developed in the following section.



\section{Main Technical Development via Joint Containment and Descent Analysis}
\label{subsec:proof}

We present the technical developments behind the theorems above. The beginning analysis of Theorem~\ref{thm:main} rests on the initial sublevel set anchored at the initialization and its regularity properties, such as the local Lipschitz smoothness constant. These properties are derived in Section~\ref{subsubsec:safe-landscape}, where we show that a safe start keeps them controlled. Moreover, these properties continue to hold along the SGD trajectory, but only locally while an iterate lies in the initial sublevel set. At the same time, these properties are needed to guarantee proper descent of the SGD. These in turn create the need of a joint containment–descent analysis to achieve SGD convergence. We detail this development for both last-iterate and average-iterate SGDs, in Sections~\ref{subsubsec:last-iterate} and~\ref{subsubsec:average-iterate} respectively. Then, we present the optimization instances that establish Theorems~\ref{thm:unsafe} and~\ref{thm:highvariance} in Section~\ref{subsec:nec-safe-start}.


\subsection{Initial Sublevel Set and Its Regularity Properties}
\label{subsubsec:safe-landscape}

Our analysis begins by looking at the sublevel set of the combined
objective, anchored at the initialization $\mathbf{x}_0$. We define such sublevel set as follows:
\begin{definition}[Initial Sublevel Set]
For a given rarity level $\lambda > 0$, an initial solution $x_0 \in \mathcal{X}$, and a positive constant $c \ge 1$, we define the initial sublevel set $S(\mathbf{x}_0, \lambda, c)$ as:
$$S(\mathbf{x}_0, \lambda, c) := \{\mathbf{x} \in \mathcal{X} \mid F(\mathbf{x}; \lambda) \le c F(\mathbf{x}_0; \lambda)\}$$
\end{definition}

Under the assumptions in Section \ref{appendix-sec:additional_assumption}, we can derive three regularity conditions of
this sublevel set: boundedness of the set, Lipschitz smoothness of the
objective, and boundedness of the gradient estimation noise. 




\begin{lemma}[Regularity of the Initial Sublevel Set]
\label{lem:safe-region-regularity}
Fix a rarity level $\lambda > 0$, a constant $c \ge 1$, and an initialization
$\mathbf{x}_0 \in \mathcal{X}$, and let $S = S(\mathbf{x}_0,\lambda,c)$ be the initial sublevel set.
Under Assumptions~\ref{assumption:second-moment}, \ref{assumption:smooth-connvex},
\ref{assumption:level-bounded}, \ref{assumption:lower-bounded} and
\ref{assumption:scale-grad-hessian}, the following hold for all $\mathbf{x} \in S$ where the constants $B_S$, $L_S$, and
$\sigma_S$ are defined below. All other constants are as defined in the referenced
assumptions, and we denote $D_S := d_f + \dfrac{c\,d_f}{\epsilon_f}\,F(\mathbf{x}_0;\lambda)$.
\begin{enumerate}
  \item \emph{(Bounded radius to optimal solution)} 
  The distance to the optimal solution
  $\mathbf{x}^*(\lambda)$ is bounded:
  \[
    \|\mathbf{x} - \mathbf{x}^*(\lambda)\| \le B_S,
    \qquad
    B_S := 2 D_S.
  \]
  \item \emph{(Local smoothness)} The objective $F(\cdot\,;\lambda)$ is
  $L_S$-smooth over $S$, i.e.,
  \[
    \bigl\|\nabla^2 F(\mathbf{x};\lambda)\bigr\| \le L_S,
    \qquad
    L_S := L_f + c\,L_{p,2}\,\bigl(1 + D_S^{\beta_2}\bigr)\,F(\mathbf{x}_0;\lambda).
  \]
  \item \emph{(Bounded gradient noise)} The per-run gradient estimator
  \[
    G(\mathbf{x},\xi) := G_f(\mathbf{x},\xi) + \gamma(\lambda)\,G_p(\mathbf{x},\xi;\lambda)
  \]
  is conditionally unbiased for $\nabla F(\mathbf{x};\lambda)$ and satisfies
  \[
    \mathbb{E}\bigl[\|G(\mathbf{x},\xi) - \nabla F(\mathbf{x};\lambda)\|^2\bigr] \le \sigma_S^2,
    \qquad
    \sigma_S := \sigma_f + c\,L_{p,1}\,\sigma_p(\lambda)\,\bigl(1 + D_S^{\beta_1}\bigr)\,F(\mathbf{x}_0;\lambda).
  \]
\end{enumerate}
\end{lemma}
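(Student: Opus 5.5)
The plan is to first bound the distance of any $\mathbf{x}\in S$ to $\tilde{\mathbf{x}}^*$ by $D_S$, and then derive all three claims from this single bound. Nonnegativity of $f$ (Assumption~\ref{assumption:lower-bounded}) and of $p$ gives, for $\mathbf{x}\in S$, both $f(\mathbf{x})\le F(\mathbf{x};\lambda)\le cF(\mathbf{x}_0;\lambda)$ and $\gamma(\lambda)p(\mathbf{x})\le cF(\mathbf{x}_0;\lambda)$.

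\textbf{Step 1 (distance to $\tilde{\mathbf{x}}^*$).} We convert the sublevel bound on $f$ into a distance bound using convexity and Assumption~\ref{assumption:level-bounded}. Fix $\mathbf{x}\in S$ and consider the segment $\mathbf{y}_\theta=\tilde{\mathbf{x}}^*+\theta(\mathbf{x}-\tilde{\mathbf{x}}^*)$. Convexity gives $f(\mathbf{y}_\theta)\le \theta f(\mathbf{x})$.
\begin{itemize}
\item If $f(\mathbf{x})\le\epsilon_f$, then $\|\mathbf{x}-\tilde{\mathbf{x}}^*\|\le d_f$ directly.
\item Otherwise, take $\theta=\epsilon_f/f(\mathbf{x})$. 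Then $\mathbf{y}_\theta$ lies in the $\epsilon_f$-sublevel set, which contains $\tilde{\mathbf{x}}^*$. Hence $\theta\|\mathbf{x}-\tilde{\mathbf{x}}^*\|\le d_f$, i.e.\ $\|\mathbf{x}-\tilde{\mathbf{x}}^*\|\le d_f f(\mathbf{x})/\epsilon_f\le c\,d_f F(\mathbf{x}_0;\lambda)/\epsilon_f$.
\end{itemize}
In both cases $\|\mathbf{x}-\tilde{\mathbf{x}}^*\|\le D_S$. Since $\mathcal{X}$ is convex, $\mathbf{y}_\theta\in\mathcal{X}$, so the argument is valid.

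\textbf{Step 2 (claim 1).} Note that $\mathbf{x}^*(\lambda)\in S$, because $F(\mathbf{x}^*;\lambda)\le F(\mathbf{x}_0;\lambda)\le cF(\mathbf{x}_0;\lambda)$ using $c\ge1$ and $F\ge0$. The triangle inequality through $\tilde{\mathbf{x}}^*$ then gives $\|\mathbf{x}-\mathbf{x}^*(\lambda)\|\le 2D_S=B_S$.

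\textbf{Step 3 (claims 2 and 3).} These follow from Assumption~\ref{assumption:scale-grad-hessian} and the bound $\gamma(\lambda)p(\mathbf{x})\le cF(\mathbf{x}_0;\lambda)$. For the Hessian,
\[
\|\nabla^2F(\mathbf{x};\lambda)\|\le L_f+\gamma(\lambda)L_{p,2}\bigl(1+D_S^{\beta_2}\bigr)p(\mathbf{x})\le L_S .
\]
For the noise, unbiasedness of $G$ follows from linearity. Minkowski's inequality in $L^2$ splits the error into the $f$-part and the $p$-part, and Assumption~\ref{assumption:second-moment} bounds each:
\[
\sqrt{\mathbb{E}\|G-\nabla F\|^2}\le \sigma_f+\gamma(\lambda)\,\sigma_p(\lambda)\,\|\nabla p(\mathbf{x})\|\le \sigma_f+cL_{p,1}\sigma_p(\lambda)\bigl(1+D_S^{\beta_1}\bigr)F(\mathbf{x}_0;\lambda)=\sigma_S .
\]
Squaring gives claim 3. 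One subtlety: $L_S$-smoothness "over $S$" is read here as a Hessian bound at points of $S$, which is exactly what is stated.

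\textbf{Main obstacle.} The only nontrivial step is Step 1: turning the sublevel-set diameter assumption into a linear-in-$F(\mathbf{x}_0;\lambda)$ distance bound via the convexity scaling trick. The rest is bookkeeping.
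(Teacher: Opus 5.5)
Your proof is correct and follows the paper's argument essentially step for step. Both use the same convexity-based distance bound to $\tilde{\mathbf{x}}^*$; you take $\theta=\epsilon_f/f(\mathbf{x})$ directly, which is slightly cleaner than the paper's use of the intermediate value theorem to locate a point with $f=\epsilon_f$ exactly. The rest also matches: the triangle inequality through $\tilde{\mathbf{x}}^*$ for $B_S$, and the Hessian triangle inequality and Minkowski bound driven by $\gamma(\lambda)p(\mathbf{x})\le cF(\mathbf{x}_0;\lambda)$. The paper additionally proves that $\mathbf{x}^*(\lambda)$ exists, since $S$ is closed and, by your Step 1, bounded, so Weierstrass applies; you take existence for granted, which is harmless because the statement presupposes it.
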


The bounded radius to optimal solution $B_S$ are derived from the sublevel-boundedness (Assumption~\ref{assumption:level-bounded})
together with the convexity of the average-case loss
(Assumption~\ref{assumption:smooth-connvex}). The bounded smoothness constant $L_S$
follows from the smoothness of the average-case loss
(Assumption~\ref{assumption:smooth-connvex}) with the relative Hessian bound on
the extreme risk (Assumption~\ref{assumption:scale-grad-hessian}). The bounded gradient-noise
bound $\sigma_S$ are from the efficiency of the adaptive variance reduction
(Assumption~\ref{assumption:second-moment}) with the relative gradient bound on
the extreme risk (Assumption~\ref{assumption:scale-grad-hessian}). 



If the initial extreme risk $p(\mathbf{x}_0)$ to be small, then it makes the combined objective value $F(\mathbf{x}_0;\lambda)$ and all three initial-sublevel-set constants $B_S$, $L_S$ and $\sigma_S$ in Lemma \ref{lem:safe-region-regularity}
small at once. That is, with safe starts, these terms grow at sub-exponential rates.

\begin{restatable}[Sub-Exponential Bounds on Initial-Sublevel-Set Constants via Safe Start]{lemma}{subexponentialconstants}
\label{lemma:subexponential-constants}
Suppose Assumptions~\ref{assumption:ldp-optimization} and
\ref{assumption:second-moment} hold for the adaptive variance reduction, and
suppose Assumption~\ref{def:safe-start} holds for a sequence of safe starts
$\prtc{\mathbf{x}_0(\lambda)}_{\lambda >0}$. Under the additional regularity
Assumptions~\ref{assumption:smooth-connvex}--\ref{assumption:init-distance}, the
initial objective value $F(\mathbf{x}_0(\lambda); \lambda)$ and the initial-sublevel-set constants $B_S(\lambda)$, $L_S(\lambda)$, and $\sigma_S(\lambda)$ (as defined in Lemma \ref{lem:safe-region-regularity}) grow sub-exponentially in $\lambda$.
\end{restatable}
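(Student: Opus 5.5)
The plan is to bound $F(\mathbf{x}_0(\lambda);\lambda)$ first; the remaining constants $B_S$, $L_S$, $\sigma_S$ from Lemma~\ref{lem:safe-region-regularity} are then built from it, $D_S$, and $\sigma_p(\lambda)$ by sums, products and fixed powers. Sub-exponential growth (in the sense $\limsup_{\lambda\to\infty}\lambda^{-1}\log(\cdot)\le 0$) is preserved under these operations. The reason is that $\log(a+b)\le \log 2+\max(\log a,\log b)$, $\log(ab)=\log a+\log b$, and $\log a^{\beta}=\beta\log a$ for fixed $\beta\ge 0$. Since the $\lambda$-independent constants $c,d_f,\epsilon_f,L_f,L_{p,1},L_{p,2},\sigma_f,\beta_1,\beta_2$ contribute nothing in the limit, it suffices to show that each of the building blocks $F(\mathbf{x}_0;\lambda)$ and $\sigma_p(\lambda)$ is sub-exponential.

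\textbf{Step 1: the initial objective.} Write $F(\mathbf{x}_0;\lambda)=f(\mathbf{x}_0)+\gamma(\lambda)p(\mathbf{x}_0)$. For the risk part, Assumption~\ref{assumption:ldp-optimization} gives $\limsup\lambda^{-1}\log\gamma(\lambda)=I$, and safe start (Assumption~\ref{def:safe-start}) gives $\limsup\lambda^{-1}\log p(\mathbf{x}_0)\le -I$. Adding these yields $\limsup\lambda^{-1}\log(\gamma p(\mathbf{x}_0))\le 0$.

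For the loss part, I use $L_f$-smoothness (Assumption~\ref{assumption:smooth-connvex}) together with $\nabla f(\tilde{\mathbf{x}}^*)=0$ and $f(\tilde{\mathbf{x}}^*)=0$ (Assumption~\ref{assumption:lower-bounded}). Here $\tilde{\mathbf{x}}^*$ is an unconstrained minimizer; if $\mathcal{X}\neq\mathbb{R}^n$, I would instead use the first-order optimality condition together with the integral form of Taylor's theorem. This gives $f(\mathbf{x}_0)\le \tfrac{L_f}{2}\|\mathbf{x}_0-\tilde{\mathbf{x}}^*\|^2$, which is sub-exponential by Assumption~\ref{assumption:init-distance}. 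Hence $F(\mathbf{x}_0;\lambda)$ is sub-exponential. One caveat is that $F$ might tend to zero, so its logarithm could be very negative; this is harmless because we only need an upper bound on the $\limsup$.

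\textbf{Step 2: the derived constants.} Since $D_S=d_f+c\,d_f F(\mathbf{x}_0;\lambda)/\epsilon_f$ is an affine function of $F(\mathbf{x}_0;\lambda)$, Step 1 makes $D_S$ sub-exponential, and therefore $B_S=2D_S$ is as well. Next, $L_S=L_f+cL_{p,2}(1+D_S^{\beta_2})F(\mathbf{x}_0;\lambda)$ is sub-exponential by the closure properties above. Finally, $\sigma_S=\sigma_f+cL_{p,1}\sigma_p(\lambda)(1+D_S^{\beta_1})F(\mathbf{x}_0;\lambda)$ is sub-exponential, using also \eqref{variance-growth} for $\sigma_p(\lambda)$. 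The additive constants $1$ and the constant terms $L_f,\sigma_f$ only help, since their logarithms are bounded.

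\textbf{Main obstacle.} Step 2 is routine, so the only step needing care is bounding $f(\mathbf{x}_0)$. Safe start controls only the risk term, so the loss must be controlled separately through the initial-distance assumption. Assumption~\ref{assumption:init-distance} is measured relative to $\tilde{\mathbf{x}}^*$, and converting it into a bound on $f$ requires the quadratic growth bound from smoothness. If the domain is constrained, a fallback is convexity: $f(\mathbf{x}_0)\le f(\tilde{\mathbf{x}}^*)+\langle\nabla f(\mathbf{x}_0),\mathbf{x}_0-\tilde{\mathbf{x}}^*\rangle$. Combined with $\|\nabla f(\mathbf{x}_0)\|\le\|\nabla f(\tilde{\mathbf{x}}^*)\|+L_f\|\mathbf{x}_0-\tilde{\mathbf{x}}^*\|$, this still gives a polynomial, hence sub-exponential, bound in the initial distance.
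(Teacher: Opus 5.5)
Your proposal is correct and follows the paper's proof essentially step for step. You bound $\gamma(\lambda)p(\mathbf{x}_0)$ by combining the LDP rate of $\gamma$ with safe start, bound $f(\mathbf{x}_0)$ through the $L_f$-smooth quadratic upper bound plus the sub-exponential initial distance, and then observe that $B_S$, $L_S$, $\sigma_S$ are polynomial in $F(\mathbf{x}_0;\lambda)$ times the sub-exponential factor $\sigma_p(\lambda)$.

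The one difference is that the paper never sets $\nabla f(\tilde{\mathbf{x}}^*)=0$. It simply keeps the linear term $\|\nabla f(\tilde{\mathbf{x}}^*)\|\,\|\mathbf{x}_0-\tilde{\mathbf{x}}^*\|$ via Cauchy--Schwarz, which covers the constrained case directly. Note that the first-order optimality condition you mention gives $\langle\nabla f(\tilde{\mathbf{x}}^*),\mathbf{x}_0-\tilde{\mathbf{x}}^*\rangle\ge 0$, which is the wrong direction for an upper bound. What actually closes that case is your convexity-plus-Lipschitz-gradient fallback, or equivalently the paper's Cauchy--Schwarz step.
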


The subsequent convergence analyses express the sampling complexity of SGD
through these constants. Because a safe start keeps each of them sub-exponential,
the resulting sampling complexity becomes sub-exponential as well, making SGD with
a safe start an efficient configuration.

\subsection{Convergence of Last-Iterate SGD}
\label{subsubsec:last-iterate}
We first analyze SGD in the last-iterate scheme class $\mathbb{A}_{\alpha,\mathrm{last}}$ (for $\alpha \in [0,1)$), which outputs the last iterate under a decaying step-size schedule $\eta_t \propto (t+1)^{-\alpha}$ together with mini-batch variance reduction. To this end, Lemmas \ref{lem:safe-region-regularity} and \ref{lemma:subexponential-constants} provide \emph{local} regularity guarantees, i.e., within the initial sublevel set $S$. That is, any smoothness and variance bounds that allow to drive a descent step hold only when the iterate lies in the safe region $S$. This creates a coupling we must resolve: if the current iterate descends, then, by definition of a sublevel set, it remains within the set. However, deriving that descent property in the first place requires the update to already lie within the safe region, so that we can leverage the local smoothness. Containment and descent are thus interdependent and must be established jointly at each step. Our analysis does exactly this, as we first show that a single controlled step both descends and remains in $S$ with high probability, and then extend this guarantee along the trajectory.


The outline of the proof is as follows. In Section~\ref{subsubsec:fixed-rarity-last}, we fix a rarity level and establish the joint containment and descent properties of SGD per iteration. Then, in Section~\ref{subsubsec:asymptotic-last}, we present specific configurations with  step sizes, iteration count, and mini-batch size for schemes in $\mathbb{A}_{\alpha,\mathrm{last}}$ such that with the complexity of such a configuration grows only sub-exponentially in the rarity level.

\subsubsection{Convergence at a Fixed Rarity Level}
\label{subsubsec:fixed-rarity-last}

Throughout this subsection, we fix the rarity level $\lambda$ and simplify notation by writing $F(\mathbf{x}) := F(\mathbf{x}; \lambda)$ and $F^* := F(\mathbf{x}^*(\lambda); \lambda)$. Let $\mathbf{x}_0$ be the initial solution, and let $S = S(\mathbf{x}_0, \lambda, 2)$ be the initial sublevel set. We treat the constants $L_S$, $\sigma_S^2$, $B_S$, and $d_f$ from Section~\ref{subsubsec:safe-landscape} as fixed at this level; their dependence on $\lambda$ is carried through only at the end, when we read off the sampling complexity in terms of $\lambda$.

Given that an iterate is already inside the safe region, we first show that a properly chosen step size together with controlled gradient noise guarantees, with high probability, both that the next iterate remains in $S$ and that the objective descends by at least a fixed amount.

\begin{restatable}[Safe Descent for Last-Iterate SGD]{lemma}{safedescentlast}
\label{lemma:descent-lemma-last}
Suppose Assumptions \ref{assumption:second-moment}--\ref{assumption:connvex} hold. Consider SGD with step sizes $\prtc{\eta_t}_{t=0}^{\infty}$ where $\eta_0 \le \frac{1}{L_S}$, and denote $\mathbf{x}_t$ as the $t$-th iterate. If $F(\mathbf{x}_0) > (1+\varepsilon)F^*$, $\mathbf{x}_t \in \text{int}(S)$, and the variance satisfies:
\begin{equation*}
    \sigma_S^2 \le \kappa \min \prtc{ \frac{\varepsilon F^* L_S}{4}, d_f^2 L_S^2, \frac{\varepsilon^2 (F^*)^2}{16 B_S^2} }
\end{equation*}
then with probability at least $1-\kappa$, $\mathbf{x}_{t+1} \in \text{int}(S)$. Furthermore, either $F(\mathbf{x}_{t+1}) \le (1+\varepsilon)F^*$ or:
\begin{equation*}
    F(\mathbf{x}_{t+1}) \le F(\mathbf{x}_t) - \frac{3\eta_t \varepsilon^2 (F^*)^2}{32 B_S^2}
\end{equation*}
\end{restatable}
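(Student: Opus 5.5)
The plan is to condition on a single good event for the gradient noise and then run a continuity (first-exit) argument along the update segment. This lets the local smoothness of Lemma~\ref{lem:safe-region-regularity} be used before we know that $\mathbf{x}_{t+1}\in S$. Write $\mathbf{g}=\bar G(\mathbf{x}_t,\xi_{t,1:B})=\nabla F(\mathbf{x}_t)+\mathbf{e}$. Since $\mathbf{x}_t\in S$, Lemma~\ref{lem:safe-region-regularity} gives $\mathbb{E}\|\mathbf{e}\|^2\le\sigma_S^2/B\le\sigma_S^2$, conditionally on the past. By Chebyshev/Markov, with probability at least $1-\kappa$,
\[
\|\mathbf{e}\|^2\le \sigma_S^2/\kappa\le \min\Bigl\{\tfrac{\varepsilon F^*L_S}{4},\,d_f^2L_S^2,\,\tfrac{\varepsilon^2(F^*)^2}{16B_S^2}\Bigr\}.
\]
All remaining steps are deterministic on this event. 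Since $\mathcal{X}=\mathbb{R}^n$ in Theorem~\ref{thm:main}, I will ignore the projection; with a projection, nonexpansiveness should give the same bounds.

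\textbf{Descent along the segment.} Set $\phi(s)=F(\mathbf{x}_t-s\mathbf{g})$ for $s\in[0,\eta_t]$. The set $S$ is convex, being a sublevel set of the convex $F$ (Assumption~\ref{assumption:connvex}). Let $\bar s$ be the supremum of the $s$ for which the segment $[\mathbf{x}_t,\mathbf{x}_t-s\mathbf{g}]$ lies in $S$; we have $\bar s>0$ because $\mathbf{x}_t\in\mathrm{int}(S)$. For $s\le\bar s$, the bound $\|\nabla^2F\|\le L_S$ holds on the segment, so the standard quadratic upper bound applies:
\[
\phi(s)\le F(\mathbf{x}_t)-s\langle\nabla F,\nabla F+\mathbf{e}\rangle+\tfrac{L_Ss^2}{2}\|\nabla F+\mathbf{e}\|^2 .
\]
Using $s\le\eta_0\le 1/L_S$ and Young's inequality on the cross term $\langle\nabla F,\mathbf{e}\rangle$, I aim for a bound of the form
\[
\phi(s)\le F(\mathbf{x}_t)-\tfrac{s}{4}\|\nabla F(\mathbf{x}_t)\|^2+C s\|\mathbf{e}\|^2
\]
with a small absolute constant $C$. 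The constants in Young's inequality are to be tuned so that the final $3/32$ comes out.

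\textbf{Gradient lower bound and the two regimes.} Suppose $F(\mathbf{x}_t)>(1+\varepsilon)F^*$. Convexity and $\|\mathbf{x}_t-\mathbf{x}^*\|\le B_S$ give
\[
\varepsilon F^*<F(\mathbf{x}_t)-F^*\le\|\nabla F(\mathbf{x}_t)\|\,B_S ,
\]
so $\|\nabla F\|^2\ge\varepsilon^2(F^*)^2/B_S^2\ge16\|\mathbf{e}\|^2$. Substituting, the noise term is absorbed and $\phi(s)\le F(\mathbf{x}_t)-\tfrac{3s}{32}\varepsilon^2(F^*)^2/B_S^2$. In particular $\phi(s)<F(\mathbf{x}_t)\le 2F(\mathbf{x}_0)$ for $s\in(0,\bar s]$, so the segment cannot reach $\partial S$. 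Hence $\bar s\ge\eta_t$, and the case $s=\eta_t$ yields both $\mathbf{x}_{t+1}\in\mathrm{int}(S)$ and the stated descent.

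If instead $F(\mathbf{x}_t)\le(1+\varepsilon)F^*$, I drop the $-\tfrac{s}{4}\|\nabla F\|^2$ term and use $s\|\mathbf{e}\|^2\le \varepsilon F^*/4$, which holds by $s\le1/L_S$ and the first term of the minimum. This gives $\phi(s)\le(1+\tfrac54\varepsilon)F^*$ up to the constant $C$. That is strictly below $2(1+\varepsilon)F^*<2F(\mathbf{x}_0)$, which again rules out exit and gives containment. The term $d_f^2L_S^2$ in the minimum gives $\eta_t\|\mathbf{e}\|\le d_f$, so the displacement is at most $\eta_t\|\nabla F\|+d_f$. I would use this as a backup for the exit argument if a step-length bound is needed, for instance to keep the endpoint within the radius $B_S$.

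\textbf{Main obstacle.} The delicate point is the dichotomy in the second regime. When $\mathbf{x}_t$ is already $\varepsilon$-optimal, the argument above only shows $F(\mathbf{x}_{t+1})\le(1+O(\varepsilon))F^*$, not the alternative $\le(1+\varepsilon)F^*$. I would first check whether the intended reading applies the descent alternative only while $F(\mathbf{x}_t)>(1+\varepsilon)F^*$, which is how the lemma is used along the trajectory. If it is not, I would try to split $\varepsilon$, proving the statement with threshold $\varepsilon/2$ for $\mathbf{x}_t$. The remaining work is the bookkeeping of the Young's-inequality constants so that $3/32$ and the factor $16$ match exactly.
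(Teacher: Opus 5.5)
Your fallback of splitting at $(1+\varepsilon/2)F^*$ is the right fix, and with it the proof goes through. Your first option is not enough. That option reads the descent alternative as applying only while $F(\mathbf{x}_t)>(1+\varepsilon)F^*$. But the proof of Lemma~\ref{lemma:upperbound-iteration} invokes the dichotomy after the trajectory has already reached $(1+\varepsilon)F^*$, to guarantee that the \emph{last} iterate is still $\varepsilon$-optimal. So the case you leave open, where you only get $F(\mathbf{x}_{t+1})\le(1+O(\varepsilon))F^*$, is genuinely needed. With the split at $\varepsilon/2$ there are two cases. If $F(\mathbf{x}_t)>(1+\varepsilon/2)F^*$, convexity and $\|\mathbf{x}_t-\mathbf{x}^*\|\le B_S$ give $\|\nabla F(\mathbf{x}_t)\|>\varepsilon F^*/(2B_S)$, hence $\|\mathbf{e}\|^2<\frac14\|\nabla F(\mathbf{x}_t)\|^2$. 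Otherwise the step raises $F$ by at most $\varepsilon F^*/8$, so $F(\mathbf{x}_{t+1})\le(1+\tfrac58\varepsilon)F^*$, which gives both containment and the first alternative.

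This is the paper's argument with the case split moved from the gradient to the function value. The paper separates $\|\nabla F(\mathbf{x}_t)\|\le\varepsilon F^*/(2B_S)$ from the large-gradient case; by the same convexity inequality, the small-gradient case forces $F(\mathbf{x}_t)\le(1+\varepsilon/2)F^*$, and the step then adds at most $\varepsilon F^*/4$. The paper handles the cross term by Cauchy--Schwarz, so its small-gradient case needs $\|\nabla F\|\,\|\mathbf{w}_t\|\le\frac{\varepsilon F^*}{2B_S}\,d_fL_S$. That is the only place the $d_f^2L_S^2$ entry and $B_S\ge d_f$ are used.

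Your route avoids that entry by absorbing the cross term, but the constant matters. Your target form $-\frac s4\|\nabla F\|^2+Cs\|\mathbf{e}\|^2$ cannot produce $3/32$ once $\|\mathbf{e}\|^2$ is only bounded by $\frac14\|\nabla F\|^2$. Use instead, for $L_Ss\le1$,
\[
\phi(s)\le F(\mathbf{x}_t)-s\langle\nabla F(\mathbf{x}_t),\mathbf{g}\rangle+\tfrac{s}{2}\|\mathbf{g}\|^2
=F(\mathbf{x}_t)-\tfrac{s}{2}\bigl(\|\nabla F(\mathbf{x}_t)\|^2-\|\mathbf{e}\|^2\bigr).
\]
In the first case this gives $-\frac{3s}{8}\|\nabla F(\mathbf{x}_t)\|^2\le-\frac{3s\varepsilon^2(F^*)^2}{32B_S^2}$. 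In the second it gives the $\varepsilon F^*/8$ increase. These are exactly the constants $16$ and $3/32$ in the statement, which are calibrated to the $\varepsilon/2$ split.
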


That is, a single controlled step ensures both next-step containment and descent. This also lets us analyze descent along the entire trajectory: once a union bound secures containment at every next step, the per-step descent guarantees apply throughout the path. A sufficiently large step-size sum then accumulates these decrements to drive the objective below the target level, yielding a sufficient condition for last-iterate convergence.

\begin{restatable}[Sufficient Conditions for Last-Iterate SGD Convergence]{lemma}{lastiterateconvergence}
\label{lemma:upperbound-iteration}
Suppose Assumptions \ref{assumption:second-moment}--\ref{assumption:connvex} hold. We perform SGD for $T$ iterations with step sizes $\prtc{\eta_t}_{t=0}^{T-1}$ where $\eta_0 \le \frac{1}{2L_S}$. Assume the sum of step sizes satisfies:
\begin{equation*}
    \sum_{t=0}^{T-1}\eta_t > \frac{32 B_S^2 F(\mathbf{x}_0)}{3 \varepsilon^2 (F^*)^2}
\end{equation*}
If $F(\mathbf{x}_0) > (1+\varepsilon)F^*$ and the variance satisfies:
\begin{equation*}
    \sigma_S^2 \le \frac{\kappa}{T} \min \prtc{ \frac{\varepsilon F^* L_S}{4}, d_f^2 L_S^2, \frac{\varepsilon^2 (F^*)^2}{16 B_S^2} }
\end{equation*}
then $F(\mathbf{x}_{T}) \le (1+\varepsilon)F^*$ with probability at least $1-\kappa$.
\end{restatable}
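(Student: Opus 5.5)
We would prove Lemma~\ref{lemma:upperbound-iteration} by iterating Lemma~\ref{lemma:descent-lemma-last} along the trajectory, with a union bound over the $T$ steps, and then a telescoping argument on the descent inequality. Define the stopping index $\tau := \min\{t \le T : F(\mathbf{x}_t) \le (1+\varepsilon)F^*\}$ (with $\tau = \infty$ if no such $t$ exists). Since $F(\mathbf{x}_0) > (1+\varepsilon)F^*$, we have $\tau \ge 1$. Note that $\mathbf{x}_0 \in \text{int}(S)$ because $F(\mathbf{x}_0) < 2F(\mathbf{x}_0)$ and $F$ is continuous; this uses $F(\mathbf{x}_0)>0$, which follows from $F(\mathbf{x}_0) > (1+\varepsilon)F^* \ge 0$. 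The assumed variance bound is $\kappa/T$ times the bracket in Lemma~\ref{lemma:descent-lemma-last}. Hence applying that lemma with confidence parameter $\kappa/T$, and with $\eta_0 \le 1/(2L_S) \le 1/L_S$, gives the following. Conditional on $\mathbf{x}_t \in \text{int}(S)$, with probability at least $1-\kappa/T$, the next iterate $\mathbf{x}_{t+1}$ lies in $\text{int}(S)$ and either reaches the target level or decreases $F$ by $\frac{3\eta_t\varepsilon^2(F^*)^2}{32B_S^2}$.

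For the bad events, let $E_t$ be the event that step $t$ fails the guarantee of Lemma~\ref{lemma:descent-lemma-last}, given that $\mathbf{x}_t \in \text{int}(S)$. The lemma is applied conditionally on $\mathcal{F}_t$, using only the fresh mini-batch $\xi_{t,1:B}$. By induction on $t$, on $\bigcap_{s<T} E_s^c$ every iterate $\mathbf{x}_0,\dots,\mathbf{x}_T$ stays in $\text{int}(S)$, and for every $t < \min(\tau,T)$ the descent inequality holds. A union bound gives $\mathbb{P}(\bigcup_{s<T} E_s) \le \sum_{s<T} \kappa/T = \kappa$. The delicate point is to check that the per-step probability bound is valid conditionally on the past, so that the events can be chained. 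This holds because the lemma's probability is over $\xi_{t,1:B}$ alone, given any deterministic $\mathbf{x}_t \in \text{int}(S)$.

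On the good event, suppose for contradiction that $\tau > T$. Telescoping the descent inequality over $t=0,\dots,T-1$ gives
\[
F(\mathbf{x}_T) \le F(\mathbf{x}_0) - \frac{3\varepsilon^2(F^*)^2}{32B_S^2}\sum_{t=0}^{T-1}\eta_t < 0
\]
by the step-size-sum assumption. This contradicts $F \ge 0$, which holds since $f\ge 0$ by Assumption~\ref{assumption:lower-bounded} and $p \ge 0$. Hence $\tau \le T$.

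The main obstacle is that the conclusion concerns the last iterate $\mathbf{x}_T$, not $\mathbf{x}_\tau$. So we must show that once the target level $(1+\varepsilon)F^*$ is reached, subsequent iterates stay at or below it. Lemma~\ref{lemma:descent-lemma-last} assumes $F(\mathbf{x}_0) > (1+\varepsilon)F^*$ but is stated for a general iterate $\mathbf{x}_t$. Its dichotomy then says that, for each $t \ge \tau$, either $\mathbf{x}_{t+1}$ is at target level or $F(\mathbf{x}_{t+1}) \le F(\mathbf{x}_t)$. A simple induction therefore gives $F(\mathbf{x}_{t+1}) \le (1+\varepsilon)F^*$ for all $t \ge \tau$. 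I would double-check this reading of the lemma's proof. If the dichotomy only yields $F(\mathbf{x}_{t+1}) \le F(\mathbf{x}_t) + (\text{noise term})$, I would instead strengthen the argument: apply the descent step at the looser level $\varepsilon/2$, and use the margin $\varepsilon F^*/4$ coming from the variance bound $\sigma_S^2 \le \kappa\varepsilon F^* L_S/(4T)$ to absorb the one-step increase. Combining this with the union bound yields $F(\mathbf{x}_T) \le (1+\varepsilon)F^*$ with probability at least $1-\kappa$.
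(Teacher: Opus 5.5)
Your proposal is correct and follows essentially the same route as the paper. You apply Lemma~\ref{lemma:descent-lemma-last} with confidence $\kappa/T$ at each step, take a union bound over the $T$ steps, use the lemma's dichotomy to show the target level is never left once reached, and rule out never reaching it by telescoping the descent bound into $F(\mathbf{x}_T)<0$. Your fallback plan is unnecessary, since Lemma~\ref{lemma:descent-lemma-last} as stated already gives the clean dichotomy (target level or strict descent) at every $\mathbf{x}_t\in\text{int}(S)$, whether or not $\mathbf{x}_t$ is already at target; your conditioning on the past before the union bound is a slightly more careful version of the paper's informal union bound.
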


That is, if we can tune the step sizes and mini-batch size to satisfy the above conditions, we reach the target accuracy for a fixed rarity level. We specify such a configuration in what follows.

\subsubsection{Efficiency of Last-Iterate SGD}
\label{subsubsec:asymptotic-last}

We now construct explicit SGD configurations as functions of $\lambda$ and apply the sub-exponential bounds of Lemma~\ref{lemma:subexponential-constants} to establish efficiency. Specifically, we show that these configurations in $\mathbb{A}_{\alpha,\mathrm{last}}$ satisfy the sufficient conditions of Lemma~\ref{lemma:upperbound-iteration}, reaching the target accuracy with high probability at every rarity level.

\begin{restatable}[Rarity-Scaled Configuration for Last-Iterate SGD]{lemma}{lastiterateconfiguration}
\label{lemma:last-iterate-configuration}
Under Assumptions \ref{assumption:ldp-optimization}--\ref{assumption:init-distance}, choose a decay rate $\alpha \in [0,1)$. Consider a sequence of safe starts $\prtc{\mathbf{x}_0(\lambda)}_{\lambda>0}$ where $F(\mathbf{x}_0(\lambda); \lambda) > (1+\varepsilon)F^*(\lambda)$. For each $\lambda$, we select step sizes $\eta_t(\lambda) = \frac{\eta_0(\lambda)}{(t+1)^\alpha}$ with $\eta_0(\lambda) \in \prts{\frac{1}{20L_S(\lambda)}, \frac{1}{2L_S(\lambda)}}$. We define the iteration horizon $T(\lambda)$ and the mini-batch size $B(\lambda)$ as:
\begin{align*}
    T(\lambda) &= \ceiling{ \prtr{\frac{640 B_S^2(\lambda) L_S(\lambda) F(\mathbf{x}_0(\lambda); \lambda)}{3\varepsilon^2 (F^*(\lambda))^2}}^{\frac{1}{1-\alpha}} } \\
    B(\lambda) &= \ceiling{ \frac{T(\lambda) \sigma_S(\lambda)^2}{\kappa} \prtr{\frac{4}{\varepsilon F^*(\lambda)L_S(\lambda)} + \frac{1}{d_f^2 L_S(\lambda)^2} + \frac{16 B_S(\lambda)^2}{\varepsilon^2 (F^*(\lambda))^2}} }
\end{align*}
where the initial-sublevel-set constants $L_S(\lambda), B_S(\lambda)$, and $\sigma_S(\lambda)$ are derived as in Section~\ref{subsubsec:safe-landscape}. Then, the optimization configuration $(\mathcal{A}(\lambda), \mathbf{x}_0(\lambda))$ utilizing $\mathcal{A}(\lambda) \in \mathbb{A}_{\alpha,\mathrm{last}}$ satisfies the conditions of Lemma~\ref{lemma:upperbound-iteration}, guaranteeing $F(\mathbf{x}_{T(\lambda)}; \lambda) \le (1+\varepsilon)F^*(\lambda)$ with probability at least $1-\kappa$.
\end{restatable}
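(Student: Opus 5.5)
The plan is to check the three hypotheses of Lemma~\ref{lemma:upperbound-iteration} directly for the prescribed $\eta_t(\lambda)$, $T(\lambda)$ and $B(\lambda)$, at each fixed $\lambda$. Lemma~\ref{lemma:upperbound-iteration} is stated for a per-sample variance bound $\sigma_S^2$. With mini-batching, the relevant noise is that of the averaged estimator $\bar G$. Since the $B$ samples are i.i.d. and conditionally unbiased, item 3 of Lemma~\ref{lem:safe-region-regularity} gives a batch variance of at most $\sigma_S^2/B$ for every $\mathbf{x}\in S$. So the first step is to apply Lemma~\ref{lemma:upperbound-iteration} with $\sigma_S^2$ replaced by $\sigma_S^2/B(\lambda)$; the sublevel-set constants $L_S,B_S,d_f$ stay unchanged. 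The standing hypotheses $F(\mathbf{x}_0)>(1+\varepsilon)F^*$ and Assumptions \ref{assumption:second-moment}--\ref{assumption:connvex} are given.

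\textbf{Step-size condition.} We need $\eta_0\le 1/(2L_S)$, which holds by the choice $\eta_0(\lambda)\in[1/(20L_S),1/(2L_S)]$.

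\textbf{Step-size sum.} For $\alpha\in[0,1)$, compare the sum with an integral:
\[
\sum_{t=0}^{T-1}(t+1)^{-\alpha}\ge\int_1^{T+1}s^{-\alpha}\,ds=\frac{(T+1)^{1-\alpha}-1}{1-\alpha}.
\]
It is simpler to use the bound $\sum_{t=0}^{T-1}(t+1)^{-\alpha}\ge T\cdot T^{-\alpha}=T^{1-\alpha}$, since each term is at least $T^{-\alpha}$. Combining this with the lower bound $\eta_0\ge 1/(20L_S)$ gives
\[
\sum_{t=0}^{T-1}\eta_t\ge \frac{T^{1-\alpha}}{20L_S}.
\]
The definition of $T(\lambda)$ gives $T^{1-\alpha}\ge 640B_S^2L_SF(\mathbf{x}_0)/(3\varepsilon^2(F^*)^2)$. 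Substituting yields
\[
\sum_{t=0}^{T-1}\eta_t\ge \frac{32B_S^2F(\mathbf{x}_0)}{3\varepsilon^2(F^*)^2}.
\]
Lemma~\ref{lemma:upperbound-iteration} asks for a strict inequality. I expect this to be the only slightly delicate point. I would obtain strictness from the bound $T\cdot T^{-\alpha}$ being strict whenever $T\ge2$ and $\alpha>0$. For $\alpha=0$ or $T=1$, I would instead check whether the lemma's proof only uses the non-strict inequality; failing that, I would add one to $T$, which leaves the asymptotics untouched.

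\textbf{Variance condition.} We need $\sigma_S^2/B\le(\kappa/T)\min\{a_1,a_2,a_3\}$, where $a_1=\varepsilon F^*L_S/4$, $a_2=d_f^2L_S^2$ and $a_3=\varepsilon^2(F^*)^2/(16B_S^2)$. This is equivalent to
\[
B\ge \frac{T\sigma_S^2}{\kappa}\max_i\frac1{a_i}.
\]
Since $\max_i 1/a_i\le\sum_i 1/a_i$, and $\sum_i 1/a_i$ is exactly the bracket in the definition of $B(\lambda)$, the choice of $B(\lambda)$ suffices. All conditions of Lemma~\ref{lemma:upperbound-iteration} then hold, so $F(\mathbf{x}_{T(\lambda)};\lambda)\le(1+\varepsilon)F^*(\lambda)$ with probability at least $1-\kappa$.

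\textbf{Main obstacle.} The main point needing care is the mini-batch reduction in the first paragraph, that is, making sure Lemma~\ref{lemma:upperbound-iteration} legitimately applies to $\bar G$. Its proof uses only conditional unbiasedness and a second-moment bound on $S$, and both transfer to the batch average via independence given $\mathbf{x}_t$.
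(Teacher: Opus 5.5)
Your proposal is correct and follows the paper's proof essentially step for step. You check $\eta_0\le 1/(2L_S)$ from the chosen range, obtain $\sum_t\eta_t\ge T^{1-\alpha}/(20L_S)$ from the termwise bound $(t+1)^{-\alpha}\ge T^{-\alpha}$, and get $\sigma_S^2/B(\lambda)\le(\kappa/T)\min_i a_i$ from $\max_i 1/a_i\le\sum_i 1/a_i$ after the standard mini-batch reduction. On strictness, which the paper's own proof also glosses over by deriving only $\ge$, no change to $T$ is needed: in the proof of Lemma~\ref{lemma:upperbound-iteration} the non-strict sum condition gives $F(\mathbf{x}_T)\le 0$, and this already contradicts $F(\mathbf{x}_T)>(1+\varepsilon)F^*\ge 0$ under the standing contradiction hypothesis.
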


The mini-batch size $B(\lambda)$ performs additional variance reduction, driving the per-step variance below the threshold of Lemma~\ref{lemma:upperbound-iteration}. Without a safe start and the variance control afforded by Assumption~\ref{assumption:second-moment}, the term $\sigma_S^2(\lambda)$ would grow exponentially in the rarity level, forcing exponential mini-batch sizes; the smoothness constant $L_S(\lambda)$ could likewise blow up exponentially, inflating the horizon $T(\lambda)$. A safe start controls both terms, keeping them sub-exponential, so that the total sampling complexity $T(\lambda)\,B(\lambda)$ is sub-exponential as well. Hence the configuration is efficient, establishing the sufficiency direction of Theorem~\ref{thm:main} for the last-iterate SGD.

\begin{restatable}[Sufficiency of Safe Starts for Last-Iterate SGD]{proposition}{lastiteratesufficiency}
\label{lemma:last-iterate-sufficiency}
For a sequence of optimization instances $\{\ProblemRare\}_{\lambda>0}$ defined in \eqref{eq:main-formulation}, suppose Assumptions \ref{assumption:ldp-optimization}--\ref{assumption:init-distance} hold with a sequence of safe starts $\{\mathbf{x}_0(\lambda)\}_{\lambda>0}$. Then, there exists a sequence of SGD schemes $\{\mathcal{A}(\lambda)\}_{\lambda>0}$, drawn from $\mathbb{A}_{\alpha,\mathrm{last}}$ (for any fixed $\alpha \in [0,1)$), such that the sequence of optimization configurations $\{(\mathcal{A}(\lambda), \mathbf{x}_0(\lambda))\}_{\lambda>0}$ is efficient.
\end{restatable}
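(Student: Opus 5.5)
The plan is to instantiate the configuration of Lemma~\ref{lemma:last-iterate-configuration} and check that its total sampling cost $T(\lambda)B(\lambda)$ is sub-exponential. Fix $\alpha\in[0,1)$, $\varepsilon>0$ and $\kappa\in(0,1)$. There are two cases. If $F(\mathbf{x}_0(\lambda);\lambda)\le(1+\varepsilon)F^*(\lambda)$, the initialization already meets the success criterion, and a scheme with $T=1$ and $B=1$ suffices. (Alternatively, note that zero gradient samples are needed, so the complexity is trivially bounded.) Otherwise, we take $\eta_0(\lambda)=1/(2L_S(\lambda))$ and $\eta_t=\eta_0(t+1)^{-\alpha}$, together with $T(\lambda)$ and $B(\lambda)$ exactly as in Lemma~\ref{lemma:last-iterate-configuration}. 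That lemma then guarantees $F(\mathbf{x}_{T(\lambda)};\lambda)\le(1+\varepsilon)F^*(\lambda)$ with probability at least $1-\kappa$. Hence $T^{\varepsilon,\kappa}_\lambda(\mathcal{A}(\lambda),\mathbf{x}_0(\lambda))\le T(\lambda)B(\lambda)$, and it remains to show $\limsup_{\lambda\to\infty}\lambda^{-1}\log\bigl(T(\lambda)B(\lambda)\bigr)\le 0$.

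For the growth bound, write $\log(TB)=\log T+\log B$. Since the ceiling adds at most a factor of $2$ whenever its argument is at least $1$ (and otherwise the quantity equals $1$), it suffices to bound the logarithms of the arguments. By Lemma~\ref{lemma:subexponential-constants}, $F(\mathbf{x}_0(\lambda);\lambda)$, $B_S(\lambda)$, $L_S(\lambda)$ and $\sigma_S(\lambda)$ are all sub-exponential, i.e., each satisfies $\limsup\lambda^{-1}\log(\cdot)\le0$. The constant $d_f$ does not depend on $\lambda$, and the factors $\varepsilon$ and $\kappa$ are fixed.

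The remaining ingredients are negative powers of $F^*(\lambda)$ and of $L_S(\lambda)$, which need lower bounds. For $L_S$ this is immediate: $L_S\ge L_f>0$, so $1/L_S$ is bounded. For $F^*$, note that $F^*\ge \gamma(\lambda)p^*(\lambda)$ because $f\ge0$ by Assumption~\ref{assumption:lower-bounded}.

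The main obstacle is this lower bound on $\gamma(\lambda)p^*(\lambda)$. Assumption~\ref{assumption:ldp-optimization} gives only a $\limsup$ on $\lambda^{-1}\log\gamma$, whereas we need $\liminf\lambda^{-1}\log(\gamma p^*)\ge0$. I would first try to avoid it altogether. Since $F^*=\min F\ge \min f=0$, one can alternatively bound $F^*\ge$ something non-vanishing. But $f(\mathbf{x}^*)$ could be $0$, so that alone does not suffice. Instead, I would use that in the efficient-configuration analysis $F^*$ enters through the ratio $F(\mathbf{x}_0)/F^*$, which must be controlled. If the paper's reading of Assumption~\ref{assumption:ldp-optimization} is that $\gamma$ matches $1/p^*$ at exponential scale (i.e., the limit exists and equals $I$), then $\lambda^{-1}\log(\gamma p^*)\to0$. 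In that case $1/F^*\le 1/(\gamma p^*)$ is sub-exponential, and I would invoke it in that form, consistent with how Lemma~\ref{lemma:subexponential-constants} presumably treats $F^*$.

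Granting this, every factor in $T(\lambda)$ is sub-exponential. Raising to the fixed power $1/(1-\alpha)$ only multiplies the $\limsup$ by a constant, so it preserves $\limsup\le0$. $B(\lambda)$ is then a product and sum of sub-exponential quantities, so $\limsup\lambda^{-1}\log(T B)\le0$, which is exactly Definition~\ref{def:efficient-config}.
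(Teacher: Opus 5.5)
Your route matches the paper's proof. If $F(\mathbf{x}_0(\lambda);\lambda)\le(1+\varepsilon)F^*(\lambda)$, charge zero samples; otherwise instantiate Lemma~\ref{lemma:last-iterate-configuration} and read sub-exponential growth of $T(\lambda)B(\lambda)$ off Lemma~\ref{lemma:subexponential-constants}. Drop the claim that $T=B=1$ suffices in the trivial case: the last iterate $\mathbf{x}_1$ need not stay accurate, so use your parenthetical zero-sample convention, as the paper does.

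The genuine gap is the step you flag yourself. Lemma~\ref{lemma:subexponential-constants} says nothing about $F^*(\lambda)$. You get the needed lower bound only by reading Assumption~\ref{assumption:ldp-optimization} as an existing limit, which amounts to adding $\liminf_{\lambda\to\infty}\lambda^{-1}\log\gamma(\lambda)\ge I$. That is not among the hypotheses, so as written you prove a weaker proposition. Lemma~\ref{lemma:subexponential-constants} does not treat $F^*$ either, so it cannot supply this. The paper's proof only asserts that the denominators of $T(\lambda)$ and $B(\lambda)$ are bounded away from zero, but that assertion does follow from the stated assumptions.

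The missing idea is that $f(\mathbf{x}^*(\lambda))$ cannot stay small, contrary to your worry that it could be $0$. Let $R:=\|\mathbf{x}^*(\lambda)-\tilde{\mathbf{x}}^*\|$ and $g(s):=p\bigl(\mathbf{x}^*(\lambda)+s(\tilde{\mathbf{x}}^*-\mathbf{x}^*(\lambda))\bigr)$ for $s\in[0,1]$; the segment lies in $\mathcal{X}$ by convexity. Every point of the segment is within $R$ of $\tilde{\mathbf{x}}^*$, so Assumption~\ref{assumption:scale-grad-hessian} gives $|g'(s)|\le KR\,g(s)$ with $K:=L_{p,1}(1+R^{\beta_1})$. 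Hence $g(0)e^{-KR}\le g(1)\le g(0)e^{KR}$. Since $\lambda^{-1}\log p^*(\lambda)\to-I$, we have $g(0)=p^*(\lambda)>0$ for large $\lambda$. Therefore $p(\tilde{\mathbf{x}}^*)>0$ is a $\lambda$-independent constant, and
\[
\log p^*(\lambda)\;\ge\;\log p(\tilde{\mathbf{x}}^*)-L_{p,1}\bigl(1+R^{\beta_1}\bigr)R .
\]
The left side tends to $-\infty$, so $R=R(\lambda)\to\infty$. Once $R>d_f$, the convexity argument in Case~2 of Lemma~\ref{lemma:bounded-sublevel} gives $f(\mathbf{x}^*(\lambda))\ge \epsilon_f R/d_f>\epsilon_f$. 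Hence $F^*(\lambda)\ge f(\mathbf{x}^*(\lambda))>\epsilon_f$ for all large $\lambda$. Together with $L_S(\lambda)\ge L_f>0$, every denominator in $T(\lambda)$ and $B(\lambda)$ is bounded below by a constant, and the rest of your argument goes through unchanged.
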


In short, with safe starts and adaptive variance reduction, we control both the smoothness of the optimization landscape and the noise of the gradient estimator, allowing SGD to converge quickly with high probability.

\subsection{Convergence of Average-Iterate SGD}
\label{subsubsec:average-iterate}

We now turn to the average-iterate scheme, the class $\mathbb{A}_{\alpha,\mathrm{avg}}$
(for $\alpha \in [0,1)$), which outputs the weighted average of the trajectory under
a decaying step-size schedule rather than the last iterate. The
containment-descent coupling persists: local smoothness and variance control
are again available only inside the initial sublevel set $S$, so we must still certify
that the trajectory stays contained while it converges. Unlike the last-iterate case,
however, the output is an average over the whole trajectory, so a per-step descent
guarantee is no longer sufficient. We instead control containment and
descendance \emph{in aggregate}, tracking two trajectory-level quantities in parallel: the
expected optimality gap over the initial sublevel set, which governs descendance, and
the first-time escape probability, which governs containment. Combining the two yields
a high-probability guarantee for the averaged output.

\subsubsection{Convergence at a Fixed Rarity Level}
\label{subsubsec:avg-fixed-rarity}

As before, we fix the rarity level $\lambda$ and write $F(\mathbf{x}) := F(\mathbf{x}; \lambda)$ and $F^* := F(\mathbf{x}^*(\lambda); \lambda)$, with $\mathbf{x}_0$ the initial solution and $S := S(\mathbf{x}_0, \lambda, 2)$ the initial sublevel set. Recall that $L_S$, $\sigma_S$, and $B_S$ denote the local smoothness, gradient variance bound, and bounding radius over $S$ as in Lemma \ref{lem:safe-region-regularity}. Moreover, for a horizon $T$ and step sizes $\prtc{\eta_t}_{t=0}^{T-1}$, recall that the weighted-average iterate is
\begin{equation*}
    \bar{\mathbf{x}}_T := \frac{\sum_{t=0}^{T-1}\eta_t \mathbf{x}_{t+1}}{\sum_{t=0}^{T-1} \eta_t}.
\end{equation*}
 
We let $\mathbb{I}_{\mathrm{safe}, t}$ be the indicator equal to $1$ if all iterates
up to step $t$ lie in the interior of $S$, denoted $\text{int}(S)$, and $0$ otherwise.
Unlike standard iterative optimization analysis that derives a contraction of the
expected optimality gap directly, our regularity bounds hold only inside the sublevel
set. We therefore bound the expected optimality gap on the event that the iterates have
not left $S$, carrying the indicator $\mathbb{I}_{\mathrm{safe}, t}$ through the analysis.

\begin{restatable}[Next-Step Expected Optimality Gap]{lemma}{nextstepeog}
\label{lemma:next-step-eog}
Under Assumptions \ref{assumption:second-moment}--\ref{assumption:connvex}, given that an iterate $\mathbf{x}_t \in \operatorname{int}(S)$ (equivalently, $\mathbb{I}_{\mathrm{safe}, t}=1$), we perform one SGD step with step size $\eta_t \le \frac{1}{L_S}$. Writing $\mathbb{E}_{\xi_t}[\,\cdot \mid \mathbf{x}_t]$ for the expectation over the gradient noise $\xi_t$ conditional on the current iterate $\mathbf{x}_t$, the expected optimality gap within the sublevel set satisfies:
\begin{equation*}
    \mathbb{E}_{\xi_t}\!\prts{ \prtr{F(\mathbf{x}_{t+1}) - F^*}\, \mathbb{I}_{\mathrm{safe}, t+1} \mid \mathbf{x}_t }
    \le \eta_t \sigma_S^2 + \frac{1}{2\eta_t} \prtr{ \|\mathbf{x}_t - \mathbf{x}^*\|^2 - \mathbb{E}_{\xi_t}\!\prts{\|\mathbf{x}_{t+1} - \mathbf{x}^*\|^2 \mid \mathbf{x}_t} }.
\end{equation*}
\end{restatable}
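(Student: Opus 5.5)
The plan is a standard one-step SGD inequality, proved pointwise in the noise on the event $\mathbb{I}_{\mathrm{safe},t+1}=1$, together with a separate check that the same pointwise right-hand side is nonnegative off that event. That check is what lets the indicator be removed before taking conditional expectations.

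I take $\mathcal{X}=\mathbb{R}^n$, as in Theorem~\ref{thm:main}, so $\mathbf{x}_{t+1}=\mathbf{x}_t-\eta_t\mathbf{g}$ with $\mathbf{g}:=\bar G(\mathbf{x}_t,\xi_{t,1:B})$. Set $\mathbf{e}:=\mathbf{g}-\nabla F(\mathbf{x}_t)$, $D_t:=\|\mathbf{x}_t-\mathbf{x}^*\|^2$ and $\eta:=\eta_t$. By Lemma~\ref{lem:safe-region-regularity}(iii) and independence within the mini-batch, $\mathbb{E}_{\xi_t}[\mathbf{e}\mid \mathbf{x}_t]=0$ and $\mathbb{E}_{\xi_t}[\|\mathbf{e}\|^2\mid\mathbf{x}_t]\le\sigma_S^2/B\le\sigma_S^2$. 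Define the pointwise quantity
\[
Q:=\frac{D_t-D_{t+1}}{2\eta}+\eta\|\mathbf{e}\|^2+\eta\langle\nabla F(\mathbf{x}_t),\mathbf{e}\rangle-\langle \mathbf{e},\mathbf{x}_t-\mathbf{x}^*\rangle .
\]
Its conditional expectation is $\frac{1}{2\eta}(D_t-\mathbb{E}[D_{t+1}\mid\mathbf{x}_t])+\eta\,\mathbb{E}\|\mathbf{e}\|^2$, which is at most the claimed right-hand side. It therefore suffices to prove the pointwise bound $(F(\mathbf{x}_{t+1})-F^*)\,\mathbb{I}_{\mathrm{safe},t+1}\le Q$.

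\emph{On the event $\mathbb{I}_{\mathrm{safe},t+1}=1$.}
\begin{itemize}
\item By Assumption~\ref{assumption:connvex}, $S$ is convex, so the segment $[\mathbf{x}_t,\mathbf{x}_{t+1}]$ lies in $S$. Lemma~\ref{lem:safe-region-regularity}(ii) then gives the descent inequality $F(\mathbf{x}_{t+1})\le F(\mathbf{x}_t)-\eta\langle\nabla F(\mathbf{x}_t),\mathbf{g}\rangle+\tfrac{L_S\eta^2}{2}\|\mathbf{g}\|^2$.
\item Convexity gives $F(\mathbf{x}_t)-F^*\le\langle\nabla F(\mathbf{x}_t),\mathbf{x}_t-\mathbf{x}^*\rangle$.
\item Write $\nabla F(\mathbf{x}_t)=\mathbf{g}-\mathbf{e}$ and use the three-point identity $\langle \mathbf{g},\mathbf{x}_t-\mathbf{x}^*\rangle=\frac{D_t-D_{t+1}}{2\eta}+\frac{\eta}{2}\|\mathbf{g}\|^2$.
\item Using $\eta L_S\le1$, collect the terms: $\frac{\eta}{2}\|\mathbf{g}\|^2+\frac{\eta}{2}\|\mathbf{g}\|^2-\eta\langle\nabla F,\mathbf{g}\rangle=\eta\langle \mathbf{g},\mathbf{e}\rangle=\eta\|\mathbf{e}\|^2+\eta\langle\nabla F,\mathbf{e}\rangle$.
\end{itemize}
This yields exactly $F(\mathbf{x}_{t+1})-F^*\le Q$.

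\emph{Off the event.} The left-hand side is $0$, so I need $Q\ge0$. Reversing the three-point identity and expanding $\|\mathbf{g}\|^2=\|\nabla F\|^2+2\langle\nabla F,\mathbf{e}\rangle+\|\mathbf{e}\|^2$ gives
\[
Q=\langle\nabla F(\mathbf{x}_t),\mathbf{x}_t-\mathbf{x}^*\rangle+\tfrac{\eta}{2}\|\mathbf{e}\|^2-\tfrac{\eta}{2}\|\nabla F(\mathbf{x}_t)\|^2\ \ge\ F(\mathbf{x}_t)-F^*-\tfrac{\eta}{2}\|\nabla F(\mathbf{x}_t)\|^2 .
\]
It then suffices to show $F(\mathbf{x}_t)-F^*\ge\|\nabla F(\mathbf{x}_t)\|^2/(2L_S)$, because $\eta\le1/L_S$. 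This is the step I expect to be the main obstacle, since smoothness is only local to $S$. I would argue along the ray $\mathbf{y}(s)=\mathbf{x}_t-s\nabla F(\mathbf{x}_t)$ for $s\in[0,1/L_S]$:
\begin{itemize}
\item As long as the ray stays in $S$, local smoothness gives $F(\mathbf{y}(s))\le F(\mathbf{x}_t)-s(1-\tfrac{L_S s}{2})\|\nabla F\|^2\le F(\mathbf{x}_t)$.
\item Since $F(\mathbf{x}_t)<2F(\mathbf{x}_0)$ because $\mathbf{x}_t\in\operatorname{int}(S)$, a continuity (first-exit-time) argument shows the ray cannot leave $S$ on this interval.
\item Evaluating at $s=1/L_S$ gives $F^*\le F(\mathbf{y}(1/L_S))\le F(\mathbf{x}_t)-\|\nabla F\|^2/(2L_S)$.
\end{itemize}

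Combining the two cases gives the pointwise bound. Taking $\mathbb{E}_{\xi_t}[\cdot\mid\mathbf{x}_t]$ kills the two linear noise terms and bounds $\eta\,\mathbb{E}\|\mathbf{e}\|^2\le\eta\sigma_S^2$, which yields the lemma.

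For a general closed convex $\mathcal{X}$ with projection, the same scheme should go through with two changes. The three-point identity is replaced by the projection inequality $\langle \mathbf{g},\mathbf{x}_{t+1}-\mathbf{x}^*\rangle\le\frac{1}{2\eta}(D_t-D_{t+1}-\|\mathbf{x}_{t+1}-\mathbf{x}_t\|^2)$. The ray argument is replaced by the projected-gradient-step analogue.
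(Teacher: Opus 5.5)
Your proof is correct, but it is organized differently from the paper's.

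The paper first proves the almost-sure descent-lemma bound $(F(\mathbf{x}_{t+1})-F^*)\,\mathbb{I}_{\mathrm{safe},t+1}\le R$, where $R:=F(\mathbf{x}_t)-F^*-\eta_t\nabla F(\mathbf{x}_t)^\top G+\frac{\eta_t^2L_S}{2}\|G\|^2$. It takes the conditional expectation of $R$ right away. Only then does it use the expected distance expansion and convexity to trade $-\frac{\eta_t}{2}\|\nabla F(\mathbf{x}_t)\|^2$ for the telescoping distance term. You instead combine smoothness, convexity and the three-point identity at the sample-path level into the majorant $Q$, and take expectations only at the end. Convexity and $\eta L_S\le 1$ give $R\le Q$ pointwise.

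The more substantive difference is the escape case.
\begin{itemize}
\item The paper shows $R\ge 0$ on escape by locating the point where the realized noisy step $\mathbf{x}_t-\tau\eta_t G$ meets $\partial S$. It reads off $-\nabla F(\mathbf{x}_t)^\top G+\frac{\eta_t L_S}{2}\|G\|^2\ge 0$ from the smoothness bound there. This argument is tied to the noise realization and needs no auxiliary inequality. Since $R\le Q$, it would also have given you $Q\ge 0$ directly.
\item You show $Q\ge 0$ unconditionally. You use the exact rewriting $Q=\langle\nabla F(\mathbf{x}_t),\mathbf{x}_t-\mathbf{x}^*\rangle-\frac{\eta}{2}\|\nabla F(\mathbf{x}_t)\|^2+\frac{\eta}{2}\|\mathbf{e}\|^2$ together with the localized bound $\|\nabla F(\mathbf{x}_t)\|^2\le 2L_S(F(\mathbf{x}_t)-F^*)$. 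You prove that bound by showing the exact-gradient ray cannot leave $S$.
\end{itemize}

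Your route costs one extra lemma. That lemma is exactly what the paper invokes without justification in the proof of Lemma~\ref{lemma:upper-bound-escape}, so your first-exit ray argument fills that gap too. You are also more explicit than the paper in two places: you use convexity of $S$ to keep the segment $[\mathbf{x}_t,\mathbf{x}_{t+1}]$ inside $S$, and you track the mini-batch variance $\sigma_S^2/B$.
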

Here, a smaller $\sigma_S^2$ leads to a tighter upper bound on the expected optimality gap. With proper step sizes and noise controlled across the entire trajectory, this per-step improvement accumulates to bound the averaged optimality gap below the target level.
\begin{restatable}[Trajectory-Wide Expected Optimality Gap]{lemma}{decaysublinear}
\label{lemma:decay-sub-linear}
Under Assumptions \ref{assumption:second-moment}--\ref{assumption:connvex}, we perform SGD for $T$ iterations with step sizes $\prtc{\eta_t}_{t=0}^{T-1}$ where $\eta_0 \le \frac{1}{L_S}$. If the variance satisfies $\sigma_S^2 \le \frac{L_S^2 B_S^2}{2T}$ and the step sizes satisfy $\sum_{t=0}^{T-1}\eta_t \ge \frac{B_S^2}{\varepsilon F^*}$, then over the entire trajectory:
\begin{equation*}
    \mathbb{E}\prts{\prtr{F(\bar{\mathbf{x}}_T) - F^*} \mathbb{I}_{\mathrm{safe}, T}} \le \varepsilon F^*
\end{equation*}
\end{restatable}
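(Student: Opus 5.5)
We obtain the bound by summing the per-step inequality of Lemma~\ref{lemma:next-step-eog} along the trajectory, weighted by the step sizes, and then passing from the iterates to the weighted average using convexity (Assumption~\ref{assumption:connvex}).

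\textbf{Monotone indicator.} The indicators are nonincreasing, $\mathbb{I}_{\mathrm{safe},T}\le \mathbb{I}_{\mathrm{safe},t+1}\le\mathbb{I}_{\mathrm{safe},t}$. On $\{\mathbb{I}_{\mathrm{safe},T}=1\}$ every $\mathbf{x}_{t+1}$ lies in $S$, and Jensen gives
\[
F(\bar{\mathbf{x}}_T)-F^*\le \frac{\sum_t \eta_t (F(\mathbf{x}_{t+1})-F^*)}{\sum_t\eta_t}.
\]
Since every summand is nonnegative, we get
\[
\bigl(F(\bar{\mathbf{x}}_T)-F^*\bigr)\mathbb{I}_{\mathrm{safe},T}\le \frac{\sum_{t=0}^{T-1}\eta_t\bigl(F(\mathbf{x}_{t+1})-F^*\bigr)\mathbb{I}_{\mathrm{safe},t+1}}{\sum_t\eta_t}.
\]
It therefore suffices to bound $\sum_t \eta_t\,\mathbb{E}[(F(\mathbf{x}_{t+1})-F^*)\mathbb{I}_{\mathrm{safe},t+1}]$ by $B_S^2/2+\sum_t\eta_t^2\sigma_S^2$ or similar.

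\textbf{Per-step bound and telescoping.} Multiply Lemma~\ref{lemma:next-step-eog} by $\eta_t\mathbb{I}_{\mathrm{safe},t}$; this is $\mathcal{F}_t$-measurable, and $\mathbb{I}_{\mathrm{safe},t+1}=\mathbb{I}_{\mathrm{safe},t}\mathbb{I}_{\mathrm{safe},t+1}$. Taking total expectations gives
\[
\eta_t\mathbb{E}[(F(\mathbf{x}_{t+1})-F^*)\mathbb{I}_{\mathrm{safe},t+1}]\le \eta_t^2\sigma_S^2+\tfrac12\mathbb{E}\bigl[\mathbb{I}_{\mathrm{safe},t}(\|\mathbf{x}_t-\mathbf{x}^*\|^2-\|\mathbf{x}_{t+1}-\mathbf{x}^*\|^2)\bigr].
\]
The distance terms do not telescope directly because the indicator changes. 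We handle this by writing $\mathbb{I}_{\mathrm{safe},t}\|\mathbf{x}_{t+1}-\mathbf{x}^*\|^2\ge \mathbb{I}_{\mathrm{safe},t+1}\|\mathbf{x}_{t+1}-\mathbf{x}^*\|^2$, which holds since $\mathbb{I}_{\mathrm{safe},t}\ge\mathbb{I}_{\mathrm{safe},t+1}$ and the norm is nonnegative. This turns the right side into a telescoping difference of $a_t:=\mathbb{E}[\mathbb{I}_{\mathrm{safe},t}\|\mathbf{x}_t-\mathbf{x}^*\|^2]$. Summing over $t$ gives at most $\tfrac12 a_0\le \tfrac12 B_S^2$ by Lemma~\ref{lem:safe-region-regularity}(1), since $\mathbf{x}_0\in S$.

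\textbf{Combining.} We obtain
\[
\mathbb{E}[(F(\bar{\mathbf{x}}_T)-F^*)\mathbb{I}_{\mathrm{safe},T}]\le \frac{B_S^2/2+\sigma_S^2\sum_t\eta_t^2}{\sum_t\eta_t}.
\]
For the noise term, use $\eta_t\le\eta_0\le 1/L_S$ to get $\sum_t\eta_t^2\le T/L_S^2$. The assumption $\sigma_S^2\le L_S^2B_S^2/(2T)$ then bounds it by $B_S^2/2$. The numerator is therefore at most $B_S^2$, and the hypothesis $\sum_t\eta_t\ge B_S^2/(\varepsilon F^*)$ yields $\varepsilon F^*$.

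\textbf{Main obstacle.} The delicate step is the indicator bookkeeping in the telescoping. One must check that Lemma~\ref{lemma:next-step-eog} may be multiplied by $\mathbb{I}_{\mathrm{safe},t}$ and that its right side involves the unrestricted expectation of $\|\mathbf{x}_{t+1}-\mathbf{x}^*\|^2$, so that dropping to $\mathbb{I}_{\mathrm{safe},t+1}$ goes in the favorable direction. The Jensen step also needs care, since it is valid only on the safe event, where all iterates and hence their convex combination lie in the convex set $S$.
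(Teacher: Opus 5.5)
Your proposal is correct and follows the paper's proof essentially step for step. You multiply the per-step bound of Lemma~\ref{lemma:next-step-eog} by $\eta_t$ and replace $\mathbb{I}_{\mathrm{safe},t}$ by the smaller $\mathbb{I}_{\mathrm{safe},t+1}$ in the subtracted distance term, so the sum telescopes to at most $\sigma_S^2 T\eta_0^2 + B_S^2/2 \le B_S^2$; you then lower the left-hand indicators to $\mathbb{I}_{\mathrm{safe},T}$ and use convexity of $F$ to pass to $\bar{\mathbf{x}}_T$, exactly as the paper does.

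One small correction to your closing remark: the Jensen step is not restricted to the safe event, since Assumption~\ref{assumption:connvex} makes $F(\cdot;\lambda)$ convex on all of $\mathcal{X}$, so the indicator is needed only for the bookkeeping.
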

This bounded optimality gap alone is not enough to ensure our target accuracy with high probability: the indicator $\mathbb{I}_{\mathrm{safe}, T}$ inside the expectation means the gap could decay simply because the trajectory escapes $S$ with growing probability, rather than because it converges. We therefore control the escape probability directly, so that this decay reflects genuine progress.
\begin{restatable}[First-Time Escape Probability]{lemma}{upperboundescape}
\label{lemma:upper-bound-escape}
Under Assumptions \ref{assumption:second-moment}--\ref{assumption:connvex}, we perform SGD with step sizes $\eta_0 \le \frac{1}{2L_S}$. If $\sigma_S \le 1$, the probability of escaping the sublevel set for the first time at iteration $t+1$ is bounded by $\mathbb{P}\prtr{\mathbb{I}_{\mathrm{safe}, t+1}=0 \mid \mathbb{I}_{\mathrm{safe}, t}=1} \le \sigma_S V_S$, where:
\begin{equation*}
    V_S := \frac{2\sqrt{2L_S F(\mathbf{x}_0)} + 1}{4L_S F(\mathbf{x}_0)} + \frac{2 B_S}{\varepsilon F^*} + \frac{B_S^2}{\varepsilon^2(F^*)^2}
\end{equation*}
\end{restatable}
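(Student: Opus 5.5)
The plan is to separate the SGD step into a deterministic gradient step plus a noise perturbation. Containment of the deterministic part follows from local smoothness. The noise part is handled by showing that $S$ contains a ball of explicit radius around the deterministic point, and then applying a Markov/Chebyshev bound to the noise.

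Write $g_t := \nabla F(\mathbf{x}_t)$ and $\mathbf{e}_t := \bar G(\mathbf{x}_t,\xi_t) - g_t$. By Lemma~\ref{lem:safe-region-regularity}, $\mathbb{E}[\mathbf{e}_t\mid \mathbf{x}_t]=0$ and $\mathbb{E}[\|\mathbf{e}_t\|^2\mid\mathbf{x}_t]\le \sigma_S^2$. Since $\mathcal{X}=\mathbb{R}^n$, we have
\[
\mathbf{x}_{t+1} = \mathbf{y}_t - \eta_t \mathbf{e}_t, \qquad \mathbf{y}_t := \mathbf{x}_t - \eta_t g_t .
\]

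\textbf{Step 1 (deterministic step stays well inside $S$).} Note that $S$ is convex by Assumption~\ref{assumption:connvex}. Along the segment $\mathbf{x}_t - s g_t$, $s\in[0,\eta_t]$, I would run a continuity argument. Let $s_0$ be the first parameter at which $F$ reaches $2F(\mathbf{x}_0)$. Up to $s_0$ the path lies in $S$, so the $L_S$-smoothness of Lemma~\ref{lem:safe-region-regularity} gives
\[
F(\mathbf{x}_t - s g_t) \le F(\mathbf{x}_t) - s\bigl(1-\tfrac{L_S s}{2}\bigr)\|g_t\|^2 \le F(\mathbf{x}_t) < 2F(\mathbf{x}_0),
\]
using $\eta_t\le\eta_0\le 1/(2L_S)$. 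This contradicts the definition of $s_0$, so the whole segment stays in $S$ and $F(\mathbf{y}_t)\le F(\mathbf{x}_t)$.

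What the noise step actually needs is slack between $F(\mathbf{y}_t)$ and the level $2F(\mathbf{x}_0)$, together with control of $\|\nabla F(\mathbf{y}_t)\|$. For the gradient I would use the local co-coercivity bound $\|\nabla F(\mathbf{y})\|^2\le 2L_S\,(2F(\mathbf{x}_0))$ on $S$, which gives $\|\nabla F(\mathbf{y}_t)\|\le 2\sqrt{L_S F(\mathbf{x}_0)}$. This is justified by the same one-step argument applied at $\mathbf{y}_t$ together with $F\ge 0$. Two subcases are needed. If $F(\mathbf{x}_t)\le (1+\varepsilon)F^*$ the slack is immediate. Otherwise I would use convexity and the radius bound $B_S$:
\[
\|g_t\| \ge \frac{F(\mathbf{x}_t)-F^*}{B_S} \ge \frac{\varepsilon F^*}{B_S}.
\]
This forces a quantitative decrease and is where the terms $B_S/(\varepsilon F^*)$ and $B_S^2/(\varepsilon F^*)^2$ in $V_S$ should originate.

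\textbf{Step 2 (a ball around $\mathbf{y}_t$ lies in $S$).} For a direction $u$, apply the continuity and smoothness argument again along $\mathbf{y}_t+su$:
\[
F(\mathbf{y}_t+u)\le F(\mathbf{y}_t)+\|\nabla F(\mathbf{y}_t)\|\,\|u\|+\tfrac{L_S}{2}\|u\|^2 .
\]
This stays below $2F(\mathbf{x}_0)$ whenever $\|u\|\le r$, where $r$ is the positive root of $\tfrac{L_S}{2}r^2+2\sqrt{L_S F(\mathbf{x}_0)}\,r = \Delta$ and $\Delta$ is the available slack from Step~1. Escape therefore requires $\eta_t\|\mathbf{e}_t\|>r$.

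\textbf{Step 3 (probability bound).} Markov on $\|\mathbf{e}_t\|$ and Jensen give $\mathbb{P}(\eta_t\|\mathbf{e}_t\|>r)\le \eta_t\sigma_S/r$. Chebyshev gives $\eta_t^2\sigma_S^2/r^2$, which is at most $\eta_t^2\sigma_S/r^2$ because $\sigma_S\le1$; this is where that hypothesis enters. Then I would bound $\eta_t/r$ using $\eta_t\le 1/(2L_S)$ and the explicit root, aiming to produce the first term $\bigl(2\sqrt{2L_SF(\mathbf{x}_0)}+1\bigr)/(4L_SF(\mathbf{x}_0))$. The remaining two terms come from the case analysis in Step~1. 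Adding the bounds over the cases yields $\sigma_S V_S$.

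\textbf{Main obstacle.} I expect the hardest part to be Step~1–2: securing a slack $\Delta$ that is bounded below by a usable quantity, and justifying the local gradient bound at $\mathbf{y}_t$ using only smoothness inside $S$. The continuity argument must be carried out carefully each time smoothness is invoked, and matching the exact constants in $V_S$ will require bookkeeping of the two regimes ($F(\mathbf{x}_t)$ close to versus far from $F^*$).
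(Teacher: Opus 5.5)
Your containment steps are valid: the continuity argument along segments, the bound $\|\nabla F(\mathbf{y}_t)\|\le 2\sqrt{L_SF(\mathbf{x}_0)}$, and the ball of radius $r$ around $\mathbf{y}_t$. They do give an escape bound of the form $\sigma_S\cdot C$. But this route cannot produce the stated $V_S$, and the obstruction is structural, not bookkeeping.

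Consider the branch $F(\mathbf{x}_t)>(1+\varepsilon)F^*$ with $F(\mathbf{x}_t)$ close to $2F(\mathbf{x}_0)$. There the only slack you can certify is $\Delta\ge\tfrac34\eta_t\|g_t\|^2$. Since $\tfrac{L_S}{2}r^2\ge 0$, the root satisfies $r\le\Delta/(2\sqrt{L_SF(\mathbf{x}_0)})$. So at the certified slack your Markov bound is $\eta_t\sigma_S/r\ge\tfrac83\,\sigma_S\sqrt{L_SF(\mathbf{x}_0)}/\|g_t\|^2$. With $\|g_t\|\ge\varepsilon F^*/B_S$, this is of order $\sigma_S\sqrt{L_SF(\mathbf{x}_0)}\,B_S^2/(\varepsilon F^*)^2$.

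The factor $\sqrt{L_SF(\mathbf{x}_0)}$ has no counterpart in $V_S$ and cannot be absorbed. By Lemma~\ref{lem:safe-region-regularity}, $L_S\ge cL_{p,2}F(\mathbf{x}_0)$, so this factor is at least of order $F(\mathbf{x}_0)$, while $B_S/(\varepsilon F^*)$ need not grow. Taking the better of Markov and Chebyshev does not remove it. The loss comes from decoupling the noise from $g_t$ and bounding $\|\nabla F(\mathbf{y}_t)\|$ by its worst case over $S$. Near the boundary, escape actually requires the noise to be comparable to $\|g_t\|$ itself, independently of $\eta_t$. A milder mismatch occurs in your near-optimal branch: with slack close to $F(\mathbf{x}_0)$, the radius bound gives about $1.11\,\sigma_S/\sqrt{L_SF(\mathbf{x}_0)}$, which exceeds the first term of $V_S$ when $L_SF(\mathbf{x}_0)$ is large.

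Using $\|\nabla F(\mathbf{y}_t)\|\le(1+L_S\eta_t)\|g_t\|$ instead would restore the right order $B_S/(\varepsilon F^*)$, though not the exact constants. The cleaner fix, which is the paper's, is never to pass through $\mathbf{y}_t$. On the escape event, take the boundary point on $[\mathbf{x}_t,\mathbf{x}_{t+1}]$ and apply the quadratic upper bound from $\mathbf{x}_t$. Then use $\|G\|^2\le2(\|g_t\|^2+\|\mathbf{e}_t\|^2)$ with $\eta_tL_S\le\tfrac12$ to show that escape implies
\[
2F(\mathbf{x}_0)-F(\mathbf{x}_t)+\tfrac{\eta_t}{2}\|g_t\|^2\le\eta_t\bigl(\|g_t\|\,\|\mathbf{e}_t\|+\tfrac12\|\mathbf{e}_t\|^2\bigr).
\]
A single Markov inequality then bounds the escape probability by $\eta_t\sigma_S(\|g_t\|+\sigma_S/2)$ divided by the left-hand side. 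Split on $F(\mathbf{x}_t)\le F(\mathbf{x}_0)$ versus $F(\mathbf{x}_t)>F(\mathbf{x}_0)$, not on $(1+\varepsilon)F^*$:
\begin{itemize}
\item In the first case the denominator is at least $F(\mathbf{x}_0)$, and $\|g_t\|\le\sqrt{2L_SF(\mathbf{x}_0)}$ together with $\eta_t\le 1/(2L_S)$ gives the first term of $V_S$.
\item In the second case the denominator is at least $\tfrac{\eta_t}{2}\|g_t\|^2$, so $\eta_t$ cancels, and $\|g_t\|\ge(F(\mathbf{x}_0)-F^*)/B_S>\varepsilon F^*/B_S$ gives the last two terms.
\end{itemize}
Finally, $\sigma_S\le1$ converts $\sigma_S^2$ into $\sigma_S$. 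Your route still yields a sub-exponential escape constant, which is all Proposition~\ref{lemma:avg-iterate-sufficiency} needs. It does not prove the lemma with this $V_S$.
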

That is, the escape probability scales linearly with the noise level $\sigma_S$, so the same variance control that bounds the optimality gap also drives the trajectory to stay contained. Combining the bounded optimality gap with this escape bound, a sufficiently small stochastic variance guarantees that the averaged iterate reaches the target accuracy with high probability.
\begin{restatable}[Sufficient Conditions for Average-Iterate SGD Convergence]{lemma}{avgsufficientconditions}
\label{lemma:avg-sufficient-conditions}
Under Assumptions \ref{assumption:second-moment}--\ref{assumption:connvex}, we perform SGD for $T$ iterations with step sizes $\prtc{\eta_t}_{t=0}^{T-1}$ where $\eta_0 \le \frac{1}{2L_S}$. Assume the sum of step sizes satisfies:
\begin{equation*}
    \sum_{t=0}^{T-1}\eta_t \ge \frac{2B_S^2}{\kappa \varepsilon F^*}
\end{equation*}
If the variance is constrained such that:
\begin{equation*}
    \sigma_S^2 \le \min \prtc{1, \prtr{\frac{\kappa}{2V_S T}}^2, \frac{L_S^2 B_S^2}{2T}}
\end{equation*}
then the weighted-average iterate $\bar{\mathbf{x}}_T$ satisfies $F(\bar{\mathbf{x}}_T) \le (1+\varepsilon)F^*$ with probability at least $1-\kappa$.
\end{restatable}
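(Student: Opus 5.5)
The plan is to split the failure event $\{F(\bar{\mathbf{x}}_T) > (1+\varepsilon)F^*\}$ according to whether the trajectory stays in $\operatorname{int}(S)$ up to time $T$. Since $\{F(\bar{\mathbf{x}}_T) > (1+\varepsilon)F^*\}\subseteq \{\mathbb{I}_{\mathrm{safe},T}=0\}\cup\{(F(\bar{\mathbf{x}}_T)-F^*)\mathbb{I}_{\mathrm{safe},T} > \varepsilon F^*\}$, it suffices to make each of these two events have probability at most $\kappa/2$. The first is handled by Lemma~\ref{lemma:upper-bound-escape}, the second by Lemma~\ref{lemma:decay-sub-linear} and Markov's inequality. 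One observation makes the decomposition legitimate: on $\{\mathbb{I}_{\mathrm{safe},T}=1\}$ all $\mathbf{x}_{t+1}$ lie in $S$, and $S$ is convex because $F$ is convex (Assumption~\ref{assumption:connvex}). Hence $\bar{\mathbf{x}}_T\in S$, and $F(\bar{\mathbf{x}}_T)-F^*\ge 0$ there, so Markov applies to the nonnegative variable $(F(\bar{\mathbf{x}}_T)-F^*)\mathbb{I}_{\mathrm{safe},T}$.

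For containment, $\mathbb{I}_{\mathrm{safe},0}=1$ because $\mathbf{x}_0\in\operatorname{int}(S)$. This holds since $S=S(\mathbf{x}_0,\lambda,2)$ and $F(\mathbf{x}_0)<2F(\mathbf{x}_0)$ when $F(\mathbf{x}_0)>0$; if $F(\mathbf{x}_0)\le(1+\varepsilon)F^*$ one may handle it trivially or assume it away, as in the last-iterate lemmas. The escape events are nested, so
\[
\mathbb{P}(\mathbb{I}_{\mathrm{safe},T}=0)=\sum_{t=0}^{T-1}\mathbb{P}(\mathbb{I}_{\mathrm{safe},t+1}=0,\ \mathbb{I}_{\mathrm{safe},t}=1)\le \sum_{t=0}^{T-1}\mathbb{P}(\mathbb{I}_{\mathrm{safe},t+1}=0\mid \mathbb{I}_{\mathrm{safe},t}=1)\le T\sigma_S V_S .
\]
Lemma~\ref{lemma:upper-bound-escape} applies because $\eta_0\le 1/(2L_S)$ and $\sigma_S\le 1$. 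The hypothesis $\sigma_S\le \kappa/(2V_ST)$ then gives a bound of $\kappa/2$.

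For descent, the step-size condition $\sum\eta_t\ge 2B_S^2/(\kappa\varepsilon F^*)$ is exactly the condition of Lemma~\ref{lemma:decay-sub-linear} with $\varepsilon$ replaced by $\varepsilon'=\kappa\varepsilon/2$. The variance hypothesis $\sigma_S^2\le L_S^2B_S^2/(2T)$ does not involve $\varepsilon$, and $\eta_0\le 1/(2L_S)\le 1/L_S$. Lemma~\ref{lemma:decay-sub-linear} therefore yields $\mathbb{E}[(F(\bar{\mathbf{x}}_T)-F^*)\mathbb{I}_{\mathrm{safe},T}]\le \kappa\varepsilon F^*/2$. Markov's inequality then gives $\mathbb{P}((F(\bar{\mathbf{x}}_T)-F^*)\mathbb{I}_{\mathrm{safe},T}>\varepsilon F^*)\le\kappa/2$. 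A union bound over the two events finishes the proof.

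The main point to verify is the reuse of Lemma~\ref{lemma:upper-bound-escape}, since $V_S$ is defined there with the original $\varepsilon$. Those hypotheses do not involve the step-size sum, so the original $\varepsilon$ can be kept there while $\varepsilon'$ is used only in Lemma~\ref{lemma:decay-sub-linear}. It also needs checking that the conditional-probability bound is uniform over histories, so that the telescoping sum above is valid.
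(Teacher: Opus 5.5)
Your proposal is correct and follows essentially the same route as the paper. You bound $\mathbb{P}(\mathbb{I}_{\mathrm{safe},T}=0)\le T\sigma_S V_S\le \kappa/2$ by a union bound over the per-step escape probabilities of Lemma~\ref{lemma:upper-bound-escape}, and you get the remaining $\kappa/2$ from Lemma~\ref{lemma:decay-sub-linear} (the step-size sum gives the gap bound $\kappa\varepsilon F^*/2$) followed by Markov's inequality. The only difference is that your convexity-of-$S$ argument for nonnegativity is unnecessary, since $F^*$ is the global minimum over $\mathcal{X}$ and $\bar{\mathbf{x}}_T\in\mathcal{X}$ already.
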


\subsubsection{Efficiency of Average-Iterate SGD}
\label{subsubsec:avg-asymptotic}
Similar to the last-iterate case, we construct an explicit $\lambda$-dependent configuration in $\mathbb{A}_{\alpha,\mathrm{avg}}$ --- selecting the step-size schedule $\mathcal A(\lambda)$, horizon $T(\lambda)$, and mini-batch size $B(\lambda)$ so that the conditions of Lemma~\ref{lemma:avg-sufficient-conditions} hold at every rarity level.

\begin{lemma}[Rarity-Scaled Configuration for Average-Iterate SGD]\label{lemma:avg-iterate-configuration}
Under Assumptions \ref{assumption:ldp-optimization}--\ref{assumption:init-distance}, choose a decay rate $\alpha \in [0,1)$. Consider a sequence of safe starts $\prtc{\mathbf{x}_0(\lambda)}_{\lambda>0}$ where $F(\mathbf{x}_0(\lambda); \lambda) > (1+\varepsilon)F^*(\lambda)$. For each $\lambda$, we select step sizes $\eta_t(\lambda) = \frac{\eta_0(\lambda)}{(t+1)^\alpha}$ with $\eta_0(\lambda) \in \prts{\frac{1}{20L_S(\lambda)}, \frac{1}{2L_S(\lambda)}}$. We define the iteration horizon $T(\lambda)$ and the mini-batch size $B(\lambda)$ as:
\begin{align*}
    T(\lambda) &= \ceiling{\prtr{\frac{40L_S(\lambda) B_S^2(\lambda)}{\kappa \varepsilon F^*(\lambda)}}^{\frac{1}{1-\alpha}}} \\
    B(\lambda) &= \ceiling{\sigma_S^2(\lambda)\prtr{1 + \prtr{\frac{2T(\lambda)V_S(\lambda)}{\kappa}}^2 + \frac{2T(\lambda)}{L_S^2(\lambda)B_S^2(\lambda)}}}
\end{align*}
where the local sublevel set constants $L_S(\lambda), B_S(\lambda)$, and single-sample variance $\sigma_S^2(\lambda)$ are derived  as in Section~\ref{subsubsec:safe-landscape}, and the escape bound parameter $V_S(\lambda)$ is defined as in Lemma \ref{lemma:upper-bound-escape}. The average-iterate configuration utilizing $\bar{\mathbf{x}}_{T(\lambda)}(\lambda)$ satisfies the conditions of Lemma \ref{lemma:avg-sufficient-conditions}, guaranteeing $F(\bar{\mathbf{x}}_{T(\lambda)}(\lambda); \lambda) \le (1+\varepsilon)F^*(\lambda)$ with probability at least $1-\kappa$.
\end{lemma}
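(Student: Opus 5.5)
The plan is to verify, at each fixed $\lambda$, the three hypotheses of Lemma~\ref{lemma:avg-sufficient-conditions} for the stated choices of $\eta_0(\lambda)$, $T(\lambda)$ and $B(\lambda)$. The lemma then yields the conclusion directly. Since the mini-batch estimator $\bar G$ averages $B$ i.i.d.\ conditionally unbiased copies of $G$, Lemma~\ref{lem:safe-region-regularity} gives an effective per-step variance bound of $\sigma_S^2/B$ on $S=S(\mathbf{x}_0,\lambda,2)$. Lemma~\ref{lemma:avg-sufficient-conditions} is therefore applied with $\sigma_S^2$ replaced by $\tilde\sigma_S^2:=\sigma_S^2/B(\lambda)$.

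\textbf{Step sizes.} The condition $\eta_0\le \frac{1}{2L_S}$ holds by the choice $\eta_0(\lambda)\in\bigl[\frac{1}{20L_S},\frac{1}{2L_S}\bigr]$.

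\textbf{Step-size sum.} Using the lower end $\eta_0\ge \frac{1}{20L_S}$ and the integral comparison $\sum_{t=0}^{T-1}(t+1)^{-\alpha}\ge \int_1^{T+1}s^{-\alpha}\,ds$, which is at least $T^{1-\alpha}$ up to a constant depending on $\alpha$ (for $\alpha\in[0,1)$), we get
\[
\sum_{t=0}^{T-1}\eta_t \ge \frac{T^{1-\alpha}}{20L_S}\cdot c_\alpha .
\]
With $T^{1-\alpha}\ge \frac{40L_SB_S^2}{\kappa\varepsilon F^*}$, this gives at least $\frac{2B_S^2}{\kappa\varepsilon F^*}$, as required. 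The constant $40=2\cdot 20$ is designed exactly for this. The delicate point is the constant in the integral bound: $\int_1^{T+1}s^{-\alpha}ds=\frac{(T+1)^{1-\alpha}-1}{1-\alpha}$, and I would check that $(T+1)^{1-\alpha}-1\ge(1-\alpha)T^{1-\alpha}$. This holds by concavity, since $(T+1)^{1-\alpha}-T^{1-\alpha}\ge 0$ and, more sharply, a mean-value argument gives $\sum_{t=1}^{T}t^{-\alpha}\ge T\cdot T^{-\alpha}=T^{1-\alpha}$ directly because each term is at least $T^{-\alpha}$. The simpler route is this last one: $\sum_{t=0}^{T-1}(t+1)^{-\alpha}\ge T^{1-\alpha}$, with no $\alpha$-dependent constant.

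\textbf{Variance.} The three terms inside $B(\lambda)$ match the three entries of the minimum. Since $B\ge\sigma_S^2\cdot 1$, we have $\tilde\sigma_S^2\le 1$. Since $B\ge \sigma_S^2(2TV_S/\kappa)^2$, we have $\tilde\sigma_S^2\le(\kappa/(2V_ST))^2$. Since $B\ge \sigma_S^2\cdot 2T/(L_S^2B_S^2)$, we have $\tilde\sigma_S^2\le L_S^2B_S^2/(2T)$.

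The main obstacle is subtle rather than computational. Lemma~\ref{lemma:upper-bound-escape}'s hypothesis $\sigma_S\le1$ and its escape bound must be invoked with the mini-batch noise level $\tilde\sigma_S$ in place of $\sigma_S$. I would justify this by noting that the proofs of Lemmas~\ref{lemma:next-step-eog}--\ref{lemma:avg-sufficient-conditions} use the noise only through the conditional second-moment bound on $S$, which $\bar G$ satisfies with $\tilde\sigma_S^2$. The constants $L_S$, $B_S$, $V_S$ depend only on $F$, $\mathbf{x}_0$ and $S$, not on the estimator, so they are unchanged. The hypothesis $F(\mathbf{x}_0)>(1+\varepsilon)F^*$ is assumed. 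All conditions therefore hold, and Lemma~\ref{lemma:avg-sufficient-conditions} gives $F(\bar{\mathbf{x}}_{T(\lambda)};\lambda)\le(1+\varepsilon)F^*(\lambda)$ with probability at least $1-\kappa$.
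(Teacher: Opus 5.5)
Your proposal is correct and matches the paper's proof: bound each $(t+1)^{-\alpha}$ below by $T^{-\alpha}$ to get $\sum_{t<T}\eta_t\ge T^{1-\alpha}/(20L_S)\ge 2B_S^2/(\kappa\varepsilon F^*)$, and read off each entry of the variance minimum in Lemma~\ref{lemma:avg-sufficient-conditions} from the corresponding summand of $B(\lambda)$, applied to the mini-batch variance $\sigma_S^2/B(\lambda)$. Your explicit remark that Lemmas~\ref{lemma:upper-bound-escape} and~\ref{lemma:avg-sufficient-conditions} are then invoked with noise level $\sigma_S/\sqrt{B(\lambda)}$, with $L_S,B_S,V_S$ unchanged, is a point the paper leaves implicit under ``standard mini-batching''; drop the integral-comparison detour, though, because the inequality $(T+1)^{1-\alpha}-1\ge(1-\alpha)T^{1-\alpha}$ you attribute to concavity is false (e.g.\ $T=1$, $\alpha=1/2$ gives $\sqrt2-1<1/2$), and the termwise bound you settle on is all that is needed.
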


Under a safe start, the governing constants $L_S(\lambda)$, $B_S(\lambda)$, and $\sigma_S(\lambda)$ remain sub-exponential in $\lambda$, so the horizon and batch size --- and hence the total sample complexity $T(\lambda)\,B(\lambda)$ --- are sub-exponential as well.

\begin{restatable}[Sufficiency of Safe Starts for Average-Iterate SGD]{proposition}{avgsufficiency}
\label{lemma:avg-iterate-sufficiency}
For a sequence of optimization instances $\{\ProblemRare\}_{\lambda>0}$ defined in \eqref{eq:main-formulation}, suppose Assumptions \ref{assumption:ldp-optimization}--\ref{assumption:init-distance} hold with a sequence of safe starts $\{\mathbf{x}_0(\lambda)\}_{\lambda>0}$. Then, there exists a sequence of SGD schemes $\{\mathcal{A}(\lambda)\}_{\lambda>0}$, drawn from $\mathbb{A}_{\alpha,\mathrm{avg}}$ (for any fixed $\alpha \in [0,1)$), such that the sequence of optimization configurations $\{(\mathcal{A}(\lambda), \mathbf{x}_0(\lambda))\}_{\lambda>0}$ is efficient.

\end{restatable}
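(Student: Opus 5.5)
The plan is to take the configuration of Lemma~\ref{lemma:avg-iterate-configuration} and show that its total cost $T(\lambda)B(\lambda)$ is sub-exponential in $\lambda$. This mirrors the last-iterate argument behind Proposition~\ref{lemma:last-iterate-sufficiency}.

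First, I split into two cases according to whether the start is already good. For those $\lambda$ with $F(\mathbf{x}_0(\lambda);\lambda)\le(1+\varepsilon)F^*(\lambda)$, the output $\mathbf{x}_0$ already succeeds, e.g.\ by taking a single iteration with step sizes shrunk so that the projected step cannot move far; alternatively the complexity is trivially $O(1)$. This case therefore contributes nothing exponential. For the remaining $\lambda$, Lemma~\ref{lemma:avg-iterate-configuration} guarantees that $F(\bar{\mathbf{x}}_{T(\lambda)};\lambda)\le(1+\varepsilon)F^*(\lambda)$ with probability at least $1-\kappa$. Hence $T^{\varepsilon,\kappa}_\lambda\le T(\lambda)B(\lambda)$, and it suffices to bound $\limsup_{\lambda}\frac1\lambda\log\bigl(T(\lambda)B(\lambda)\bigr)\le 0$.

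Second, I collect the exponential rates of the ingredients. By Lemma~\ref{lemma:subexponential-constants}, each of $F(\mathbf{x}_0(\lambda);\lambda)$, $B_S(\lambda)$, $L_S(\lambda)$, $\sigma_S(\lambda)$ has $\limsup\frac1\lambda\log(\cdot)\le0$. The troublesome quantities are those appearing in denominators: $F^*(\lambda)$, $L_S(\lambda)$, $B_S(\lambda)$, and $F(\mathbf{x}_0)$ inside $V_S$. I need lower bounds on them.
\begin{itemize}
\item For $F^*$, use $F^*\ge \min_x f=0$ together with something better. Since $F^*\ge \gamma(\lambda)p^*(\lambda)$ and $f\ge0$, Assumption~\ref{assumption:ldp-optimization} gives $\liminf\frac1\lambda\log F^*\ge \liminf\frac1\lambda\log\gamma - I$. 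This is the delicate point: the assumption only provides a $\limsup$ on $\gamma$. I would argue that $F^*\ge\gamma p^*$ along with $F^*\le F(\mathbf{x}_0)$ is acceptable. Failing that, I would bound $1/F^*$ using the fact that $F^*$ is bounded below by a positive constant whenever $f$ has positive minimum over the relevant set. If neither works, I would read the assumption as effectively a limit (as the paper intends, $\gamma\asymp 1/p^*$).
\item For $L_S$, $L_S\ge L_f>0$ is a constant.
\item For $B_S$, $B_S=2D_S\ge 2d_f>0$.
\item For $F(\mathbf{x}_0)$, $F(\mathbf{x}_0)>(1+\varepsilon)F^*$, so its reciprocal is controlled by that of $F^*$.
\end{itemize}
It follows that $V_S(\lambda)$ is sub-exponential, as a sum of products of sub-exponential terms.

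Third, I plug these into the formulas. The base of $T(\lambda)$ is $40L_SB_S^2/(\kappa\varepsilon F^*)$. A product of sub-exponential factors is sub-exponential, and raising it to the fixed power $1/(1-\alpha)$ only multiplies the rate by a constant. Hence $\limsup\frac1\lambda\log T\le0$, and the ceiling adds at most $1$. The batch size $B(\lambda)$ is $\sigma_S^2$ times a polynomial in $T, V_S, 1/L_S, 1/B_S$, so it is sub-exponential too. Since $\log(TB)=\log T+\log B$, efficiency follows, with the step size $\eta_0\in[1/(20L_S),1/(2L_S)]$ being deterministic and polynomially decaying as required by $\mathbb{A}_{\alpha,\mathrm{avg}}$.

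The main obstacle is the lower control on $F^*(\lambda)$ (equivalently, on $\gamma(\lambda)$), since it sits in denominators. Everything else reduces to the bookkeeping rule that sums, products, and fixed powers of sub-exponential sequences remain sub-exponential.
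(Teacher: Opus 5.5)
Your route is the paper's. You handle separately the $\lambda$ for which $\mathbf{x}_0(\lambda)$ already meets the target. The paper simply counts zero samples there, which is cleaner than your tiny extra step, since a noisy step could push $F$ above the threshold when $F(\mathbf{x}_0)$ sits exactly at it. You then invoke Lemma~\ref{lemma:avg-iterate-configuration} and do sub-exponential bookkeeping on $T(\lambda)$, $B(\lambda)$ and $V_S(\lambda)$ via Lemma~\ref{lemma:subexponential-constants}.

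The one step you leave open is a lower bound on $F^*(\lambda)$ in the denominators. The paper does not prove this either; it only asserts that the ``denominators are strictly bounded away from zero.'' Your three fixes fare differently:
\begin{itemize}
\item The first does not work. Assumption~\ref{assumption:ldp-optimization} controls $\gamma$ only through a $\limsup$, so $F^*\ge\gamma p^*$ gives no $\liminf$ control, and $F^*\le F(\mathbf{x}_0)$ points the wrong way.
\item The third changes the hypothesis.
\item The second is the right idea, and it closes with a short compactness argument based on $F^*(\lambda)\ge f(\mathbf{x}^*(\lambda))$.
\end{itemize}

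Here is that argument. Suppose $f(\mathbf{x}^*(\lambda_k))\to0$ along some $\lambda_k\to\infty$. Then eventually $\mathbf{x}^*(\lambda_k)$ lies in $\{\mathbf{x}\in\mathcal{X}: f(\mathbf{x})\le\epsilon_f\}$. This set is compact: it is closed because $\mathcal{X}$ is closed and $f$ is continuous, and bounded by Assumption~\ref{assumption:level-bounded}. So $\mathbf{x}^*(\lambda_k)$ has a limit point $\bar{\mathbf{x}}$ with $f(\bar{\mathbf{x}})=0$. Since $I>0$ forces $p^*(\lambda)\to0$, continuity of $p$ gives $p(\bar{\mathbf{x}})=0$. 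But then $F(\bar{\mathbf{x}};\lambda)=0$ for every $\lambda$, so $F^*(\lambda)=0$. Every minimizer then has zero risk, i.e.\ $p^*(\lambda)=0$, which contradicts $\frac1\lambda\log p^*(\lambda)\to-I>-\infty$.

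Hence $\liminf_{\lambda\to\infty}F^*(\lambda)>0$. Combined with $F(\mathbf{x}_0)>(1+\varepsilon)F^*$, $L_S\ge L_f$ and $B_S\ge 2d_f$, every denominator in $T$, $B$ and $V_S$ is bounded away from zero, and the rest of your argument goes through.
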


Together with Proposition~\ref{lemma:last-iterate-sufficiency}, this establishes that a safe start suffices for efficient rare-event optimization under both the last-iterate and average-iterate schemes.


\subsection{Necessity of Safe Start and Efficient Gradient Estimators}
\label{subsec:nec-safe-start}

We now provide explicit rare-event optimization instances that constitute the
necessity results in Theorems \ref{thm:unsafe} and \ref{thm:highvariance}.

\begin{proposition}[Explicit Instance for Theorem~\ref{thm:unsafe}]
\label{prop:unsafe-instance}
Consider a sequence of two-dimensional optimization instances
$\{\ProblemRare\}_{\lambda>0}$ on $\mathcal{X}=\mathbb{R}_{\geq0}^2$,
\[
  (\ProblemRare) \qquad \min_{\mathbf{x}=(u,v)\in\mathbb{R}_{\geq0}^2}
  F(\mathbf{x};\lambda)= f(\mathbf{x}) +\gamma(\lambda) p(\mathbf{x})
\]
where $f(\mathbf{x})= u^2 +v$, $p(\mathbf{x})= e^{-u-v}$, and $\gamma(\lambda)= e^{\lambda}$.
We consider a sequence of unsafe starts $\prtc{\mathbf{x}_0(\lambda)}_\lambda$ where
$\mathbf{x}_0(\lambda) = \big(\tfrac{\lambda}{4},0\big)$. For  any
$\{\mathcal{A}(\lambda)\}_{\lambda>0}$ from either last-iterate or average-iterate SGD
with monotonically diminishing step sizes,
\[
  \limsup_{\lambda\to\infty}\tfrac1\lambda
  \log T^{\varepsilon,\kappa}_{\lambda}\big(\mathcal{A}(\lambda),\mathbf{x}_0(\lambda)\big)>0,
\]
where $\varepsilon=1$ and $\kappa = \tfrac12$.
\end{proposition}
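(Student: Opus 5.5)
We take the gradient oracle exact, $G_f=\nabla f$ and $G_p=\nabla p$. This satisfies Assumption~\ref{assumption:second-moment} with $\sigma_f=\sigma_p=0$, so the dynamics are deterministic, the batch size is irrelevant, and it suffices to lower bound the number of iterations $T$. The instance is $F(u,v;\lambda)=u^2+v+e^{\lambda-u-v}$.

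\textbf{Set-up.} First we check the setting. The KKT conditions give $\mathbf{x}^*(\lambda)=(\tfrac12,\lambda-\tfrac12)$, $p^*(\lambda)=e^{-\lambda}$ and $F^*=\lambda+\tfrac34$. Hence Assumption~\ref{assumption:ldp-optimization} holds with $I=1$. The start $\mathbf{x}_0=(\lambda/4,0)$ has $p(\mathbf{x}_0)=e^{-\lambda/4}$, so it is unsafe, and its distance to $\tilde{\mathbf{x}}^*=0$ is linear in $\lambda$. With $\varepsilon=1$, any $(u,v)$ with $F\le 2F^*=2\lambda+\tfrac32$ must satisfy three conditions:
\[
u\le\sqrt{2\lambda+2},\qquad v\le 2\lambda+2,\qquad s:=u+v\ge\lambda-\log(2\lambda+2).
\]
In particular, from the first and third conditions, any such point has $v\ge\lambda/2$ for large $\lambda$.

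\textbf{Two monotonicity facts.} Both hold for any step sizes, even with projection onto $\mathbb{R}^2_{\ge0}$:
\begin{itemize}
\item[(a)] Since $\partial_v F=1-e^{\lambda-s}\le1$, we have $v_{t+1}\ge v_t-\eta_t\ge v_t-\eta_0$, so $v$ can fall by at most $\eta_0$ per step.
\item[(b)] Since $-\partial_u F=e^{\lambda-s}-2u\ge-2u$, we have $u_{t+1}\ge(1-2\eta_t)u_t$, so $u$ contracts at most geometrically.
\end{itemize}
We also record the coupling $\Delta u_t=\Delta v_t+\eta_t(1-2u_t)$ between the two coordinates, valid away from the boundary. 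It shows that the iterates cannot push $v$ upward without pushing $u$ up by essentially the same amount.

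\textbf{Case 1: $\eta_0\ge e^{-\lambda/4}$.} The first step is $v_1=\eta_0(e^{3\lambda/4}-1)\ge \tfrac12 e^{\lambda/2}\gg 2\lambda+2$. By (a), $v_t\ge v_1-t\eta_0$, so reaching $v\le 2\lambda+2$ needs
\[
T\ge \frac{v_1-2\lambda-2}{\eta_0}\approx e^{3\lambda/4}.
\]
For the average-iterate output the same bound holds: every $\mathbf{x}_{t+1}$ with $t<T$ has $v_{t+1}\ge v_1-T\eta_0$, and hence so does the average $\bar v_T$.

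\textbf{Case 2: $\eta_0<e^{-\lambda/4}$.} Here we show that $\sum_{t<T}\eta_t$ must be bounded below by a constant $c>0$, which forces $T\ge c\,e^{\lambda/4}$. To reach the target, $v$ must climb from $0$ to at least $\lambda/2$. By the coupling identity, $u$ then climbs by roughly the same amount, up to a correction of at most $\sum\eta_t(1+2u_t)$. So at some time $u$ is at least of order $\lambda/2$, while at the end $u\le\sqrt{2\lambda+2}$. By (b), $\log u$ decreases by at most about $2\eta_t$ per step, so shrinking $u$ from order $\lambda$ to $\sqrt{2\lambda}$ needs $\sum\eta_t\gtrsim\tfrac14\log(\lambda/2)$. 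If instead $\sum\eta_t$ is small, the correction term is small and this same route applies. For the average iterate, we use convexity of the target region, or directly that $\bar u_T\le\sqrt{2\lambda+2}$ and $\bar v_T\ge\lambda/2$ require a positive fraction of the weighted iterates to have small $u$ and large $v$, and we rerun the same accounting.

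\textbf{Main obstacle.} The hardest step is making Case 2 rigorous when single steps are not small. For $\eta_0$ near $e^{-\lambda/4}$ and $s_t$ near $\lambda/4$, one step can move $u$ and $v$ by up to $\eta_0e^{3\lambda/4}$, which may be large. I would handle this with the coupling identity: whatever the jump size, it raises $u$ about as much as $v$, so any overshoot only enlarges the subsequent $u$-contraction cost under (b). Projection onto the boundary $u=0$ or $v=0$ needs separate checking, but it only occurs when the corresponding gradient component is positive, which does not weaken (a) or (b). Combining the two cases, every monotonically diminishing schedule gives $T\ge \exp(\lambda/4-o(\lambda))$, hence $\limsup_{\lambda\to\infty}\tfrac1\lambda\log T^{1,1/2}_\lambda\ge\tfrac14>0$.
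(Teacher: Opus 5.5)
Your proof is correct and follows the paper's route: an exact oracle, a split on $\eta_0$, the first-step $v$-overshoot plus fact (a) for large $\eta_0$, and the contraction bound (b) on $u$ for small $\eta_0$. The paper splits at $e^{-\lambda/2}$ and gets exponent $\tfrac12$; your split at $e^{-\lambda/4}$ gives $\tfrac14$, which is equally sufficient. The coupling identity and the ``main obstacle'' in Case 2 are unnecessary, because $u_0=\lambda/4$ already and (b) applied from $t=0$ gives $\min\{u_T,\bar u_T\}\ge\tfrac{\lambda}{4}\bigl(1-2\sum_{t<T}\eta_t\bigr)$ regardless of jump sizes; the average inherits this bound from each iterate, so convexity of the target region is not needed, and $u\le\sqrt{2\lambda+2}$ then forces $\sum_{t<T}\eta_t\ge\tfrac14$ for large $\lambda$, exactly as in the paper's Case 2.
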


We verify first that these instances satisfy every assumption except the safe start
(Assumption~\ref{def:safe-start}). The optimal solution is
$\mathbf{x}^*(\lambda)=\big(\tfrac12,\lambda-\tfrac12\big)$. The optimal tail risk decays at
rate $\lim_{\lambda\to\infty}-\tfrac1\lambda\log p^*(\lambda)=I$ with $I=1$, and the weight
$\gamma(\lambda)=e^{\lambda}$ matches this rate
(Assumption~\ref{assumption:ldp-optimization}). We work in the exact-gradient setting,
which satisfies Assumption~\ref{assumption:second-moment} trivially with
$\sigma_f=\sigma_p(\lambda)=0$. The average-case loss $f(\mathbf{x})=u^2+v$ is convex, twice
differentiable, and $L_f$-smooth with $L_f=2$
(Assumption~\ref{assumption:smooth-connvex}). Moreover, its sublevel set
$S_{\epsilon_f}=\{\mathbf{x}\in\mathcal{X}\mid u^2+v\le 1\}\subseteq[0,1]^2$ when
$\epsilon_f=1$ is bounded (Assumption~\ref{assumption:level-bounded}), and it attains minimum at $\tilde{\mathbf{x}}^*=(0,0)$ with $f(\tilde{\mathbf{x}}^*)=0$
(Assumption~\ref{assumption:lower-bounded}). The gradient and Hessian of the risk are
controlled by the risk itself, in the sense $\|\nabla p(\mathbf{x})\|=\sqrt2\,p(\mathbf{x})$ and
$\|\nabla^2 p(\mathbf{x})\|=2\,p(\mathbf{x})$, so
Assumption~\ref{assumption:scale-grad-hessian} holds with $\beta_1=\beta_2=0$. The
combined objective $F(\cdot;\lambda)$ is convex for every $\lambda$
(Assumption~\ref{assumption:connvex}), and the initialization is sub-exponentially close
to $\tilde{\mathbf{x}}^*$, since
$\limsup_{\lambda\to\infty}\tfrac1\lambda\log\|\mathbf{x}_0(\lambda)-\tilde{\mathbf{x}}^*\|
=\limsup_{\lambda\to\infty}\tfrac1\lambda\log(\lambda/4)=0$
(Assumption~\ref{assumption:init-distance}). The safe-start condition, however, fails:
$\limsup_{\lambda\to\infty}\tfrac1\lambda\log p(\mathbf{x}_0(\lambda))=-\tfrac14>-I$.

We first provide a proof sketch for the last-iterate scheme. In this deterministic setting, the sample complexity
$T^{\varepsilon,\kappa}_{\lambda}(\mathcal{A}(\lambda),\mathbf{x}_0(\lambda))$ is basically the first stopping time $\tau=\tau(\lambda)$ at which the iterate reaches the target level
$F(\mathbf{x}_{\tau};\lambda)\le 2F(\mathbf{x}^*(\lambda);\lambda)$. We can derive the necessary condition of $\mathbf{x}_\tau= (u_\tau, v_\tau)$ as follows: since $F(\mathbf{x};\lambda)\ge v$, the
target requires $v_\tau\le 2\lambda+1.5$; and since
$F(u_\tau,v_\tau;\lambda)\ge u_\tau^2-u_\tau+\lambda+1$ by minimizing over $v$, it also
requires $u_\tau<\lambda/5$. Writing
$\tau_u(\lambda)=\min\{t\ge1: u_t<\tfrac{\lambda}{5}\}$ and
$\tau_v(\lambda)=\min\{t\ge1: v_t\le 2\lambda+1.5\}$, we have
$\tau(\lambda)\ge\max\{\tau_u(\lambda),\tau_v(\lambda)\}$. We then show that, for any step sizes, either of these terms grow exponentially in the rarity level.

We divide the analysis into two cases. If $\eta_0\ge e^{-\lambda/2}$,
the exponentially large initial risk gradient makes the first $v$-update overshoot by
$v_1=\Omega(\eta_0 e^{3\lambda/4})$, an exponentially large distance from $v^*(\lambda)$.
Each subsequent step returns $v_t$ by at most $\eta_t\le\eta_0$ as the gradient $\grad_v F(u_t, v_t;\lambda )$ is roughly $1$; thus,
$\tau_v(\lambda)=\Omega(e^{3\lambda/4})$. If instead $\eta_0< e^{-\lambda/2}$, the
$u$-coordinate crawls: from $u_0=\lambda/4$, each step contracts $u_t$ by a factor at most
$1-2\eta_t$, while the target accuracy demands $u_\tau<\lambda/5$, resulting in
$\tau_u(\lambda)>\tfrac1{10}e^{\lambda/2}$. Either way, $\tau(\lambda)$ grows
exponentially in $\lambda$.

The average-iterate scheme inherits the same bound: a weighted average of iterates that
have all stayed above the thresholds $\lambda/5$ and $2\lambda+1.5$ stays above them as
well, so the average-iterate scheme reaches the thresholds no sooner than the last iterate does. Next, we present
the instances for Theorem~\ref{thm:highvariance}.

\begin{proposition}[Explicit Instance for Theorem~\ref{thm:highvariance}]
\label{prop:highvariance-instance}
Consider a sequence of one-dimensional optimization instances $\{\ProblemRare\}_{\lambda>0}$ on $\mathcal{X}=\mathbb{R}_{\geq0}$,
\[
  (\ProblemRare)\qquad \min_{x\in\mathbb{R}_{\geq0}} F(x;\lambda)= x+\gamma(\lambda)p(x),
\]
where $p(x)=e^{-x}$ and $\gamma(\lambda)=e^{\lambda}$. Suppose $\nabla f$ is estimated exactly by $G_f\equiv1$, while $\nabla p$ is estimated by
\[
  G_p(x,\xi)=
  \begin{cases}
    -e^{-x}+e^{-x/2}, & \text{w.p. } \tfrac12,\\[2pt]
    -e^{-x}-e^{-x/2}, & \text{w.p. } \tfrac12,
  \end{cases}
  \quad\text{for\ \ } x\in\mathcal{H}_\lambda,
  \qquad
  G_p(x,\xi)=-e^{-x}\ \text{ otherwise},
\]
for a high-variance region $\mathcal{H}_\lambda=\big(\tfrac32(\lambda+1),\,\tfrac{8}{5}(\lambda+1)\big)$. For any constant-step-size last-iterate scheme $\mathcal{A}(\lambda)\in\mathbb{A}_{0,\mathrm{last}}$with a constant mini-batch size $B(\lambda)$, there exists a safe start $x_0(\lambda)\in \mathcal{X}_0(\lambda)=[\tfrac{8}{5}(\lambda+1),\,\tfrac{33}{10}(\lambda+1)+1]$ under which
\[
  \limsup_{\lambda\to\infty}\tfrac1\lambda
  \log T^{\varepsilon,\kappa}_{\lambda}\big(\mathcal{A}(\lambda),x_0(\lambda)\big)>0,
\]
where $\varepsilon=\tfrac12$ and $\kappa =\tfrac12$.
\end{proposition}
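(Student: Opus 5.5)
We first pin down the landscape. Here $F(x;\lambda)=x+e^{\lambda-x}$, so $x^*(\lambda)=\lambda$ and $F^*=\lambda+1$. With $\varepsilon=\tfrac12$, success means $F(x_T)\le \tfrac32(\lambda+1)$, which forces $x_T\le\tfrac32(\lambda+1)$ because $F\ge x$. It also forces $x_T\ge \lambda-\log(\tfrac32(\lambda+1))$, since otherwise $e^{\lambda-x}$ exceeds the target. We then verify the assumptions. Every $x_0\in\mathcal X_0(\lambda)$ has $p(x_0)\le e^{-\frac85\lambda}$, so it is a safe start with sub-exponential distance to $\tilde x^*=0$. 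The gradient noise in $\mathcal H_\lambda$ is $\pm e^{-x/2}$, whose variance $e^{-x}=|\nabla p(x)|$ meets the relaxed absolute bound but not the relative bound \eqref{efficiency gradient}. After scaling by $\gamma$, the noise is $\pm e^{\lambda-x/2}$, and its magnitude on $\mathcal H_\lambda$ lies between roughly $e^{0.2\lambda}$ and $e^{0.25\lambda}$. Outside $\mathcal H_\lambda$ the dynamics are deterministic, and for $x\ge\tfrac32(\lambda+1)$ the drift is $\nabla F=1-e^{\lambda-x}\in[1-e^{-\lambda/2},1]$. Every iterate starting in $\mathcal X_0(\lambda)$ must therefore descend deterministically into, or past, the band $[\tfrac32(\lambda+1),\tfrac85(\lambda+1)]$ before it can succeed.

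Fix the scheme, that is, a constant step $\eta_0=\eta_0(\lambda)$ and batch $B=B(\lambda)$. We argue by contradiction: suppose $T$ and $B$ are both $e^{o(\lambda)}$ along a subsequence. We split into cases on $\eta_0$ and in each case choose $x_0(\lambda)\in\mathcal X_0(\lambda)$ adversarially.

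\emph{Case 1: $\eta_0\le e^{-\lambda/8}$.} Take $x_0=\tfrac{33}{10}(\lambda+1)$. Until the iterate reaches $\tfrac85(\lambda+1)$ there is no noise and each step moves by at most $\eta_0$. Covering distance $\tfrac{17}{10}(\lambda+1)$ therefore takes at least $\tfrac{17}{10}(\lambda+1)e^{\lambda/8}$ iterations, so $T$ is exponential.

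\emph{Case 2: $\eta_0\ge \tfrac{17}{10}(\lambda+1)+1$.} From any $x_0$ the first step lands at $0$ after projection. There the gradient is about $-e^{\lambda}$, so the next step throws the iterate to roughly $\eta_0e^{\lambda}$. Returning to $\tfrac32(\lambda+1)$ from there with drift at most $1$ per unit step takes about $e^{\lambda}$ iterations.

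\emph{Case 3: $e^{-\lambda/8}<\eta_0<\tfrac{17}{10}(\lambda+1)+1$.} The deterministic pre-$\mathcal H_\lambda$ map is essentially $x\mapsto x-\eta_0(1-\delta)$ with $\delta$ exponentially small. If $\eta_0<\tfrac1{10}(\lambda+1)$, the trajectory cannot jump over $\mathcal H_\lambda$, so it enters it from any $x_0$. If $\eta_0$ is larger, we use the freedom in $x_0$: the interval $\mathcal X_0(\lambda)$ has length exceeding $\eta_0$, so we can shift $x_0$ to make some iterate land in $\mathcal H_\lambda$. Alternatively, the iterate jumps below $\lambda-\log(\tfrac32(\lambda+1))$, where the gradient is again exponentially large and Case~2's overshoot applies.

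Once $x_t\in\mathcal H_\lambda$, the update noise is $\eta_0e^{\lambda-x_t/2}R_B$, where $R_B$ is the average of $B$ Rademacher variables. The main obstacle is to show this kick is exponentially large with constant probability per visit, uniformly over the unknown sub-exponential $B$. The plan is to use $\mathbb P(|R_B|\ge 1/\sqrt B)\ge c>0$ together with symmetry: with probability at least $c/2$ each, the kick is $\pm$ at least $\eta_0e^{0.2\lambda}/\sqrt B\ge e^{0.075\lambda-o(\lambda)}$.

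If the kick is downward, the iterate is projected near $0$. The next step then overshoots to order $\eta_0e^{\lambda}$, which costs about $e^{\lambda}$ return iterations as in Case~2.

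If the kick is upward, the iterate sits at distance $e^{0.075\lambda-o(\lambda)}$. It takes about $e^{0.075\lambda-o(\lambda)}/\eta_0$ deterministic steps to return, and then it re-enters $\mathcal H_\lambda$ and faces the same dichotomy. Here we must handle the fact that re-entry may skip $\mathcal H_\lambda$ for large $\eta_0$; in that case the iterate overshoots low, and Case~2's blow-up applies.

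In every branch, with probability more than $\kappa=\tfrac12$ the first success time is at least $e^{c'\lambda}$ for some $c'>0$, or else $B$ must be exponential. Either way $T\cdot B$ is exponential, which gives $\limsup_{\lambda\to\infty}\frac1\lambda\log T^{\varepsilon,\kappa}_\lambda>0$.
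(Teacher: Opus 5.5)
Your architecture is the paper's: three step-size regimes, an adversarial $x_0\in\mathcal X_0(\lambda)$, a forced visit to $\mathcal H_\lambda$, and then either a ``kick to $0$ and overshoot to about $\eta_0e^{\lambda}$'' or a ``kick exponentially far up''. The gap is in the probability accounting of Case~3. Success only has to occur with probability $1-\kappa=\tfrac12$, so you must show failure with probability strictly exceeding $\tfrac12$.

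You only establish a kick on $\{|R_B|\ge 1/\sqrt B\}$. That event has probability $\tfrac14$ at $B=3$ and tends to $\mathbb P(|Z|\ge 1)\approx 0.32$ as $B\to\infty$. You then assert ``probability more than $\kappa$'' without justification. Repetition does not rescue this: once $\eta_0$ exceeds the width $\tfrac1{10}(\lambda+1)$ of $\mathcal H_\lambda$, an unkicked iterate leaves the band downward after one step, so there is a single coin flip. Meanwhile the events $0<|R_B|<1/\sqrt B$ are left unanalysed, even though they also produce an exponential kick.

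The repair is the paper's Case~3.1/3.2 split on the sign of the net Rademacher count. On $\{R_B\neq 0\}$ you have $|R_B|\ge 1/B$, so the displacement is at least $\eta_0e^{0.2\lambda-0.8}/B\ge e^{0.075\lambda-0.8}/B$. This is exponential for sub-exponential $B$, and $\mathbb P(R_B=0)\le\tfrac12$.

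Even the repaired bound is only $\ge\tfrac12$, with equality at $B=2$, and there the claim actually fails. Take $B=2$ and, say, $\eta_0=0.3(\lambda+1)$. Every start in $\mathcal X_0(\lambda)$ either passes $\mathcal H_\lambda$ deterministically and succeeds, or visits it exactly once. On that visit the balanced outcome, which has probability exactly $\tfrac12$, moves the iterate into roughly $(1.2(\lambda+1),1.3(\lambda+1))$, where $F\le\tfrac32(\lambda+1)$. So success has probability at least $1-\kappa$ after $O(1)$ samples, and the statement with $\kappa=\tfrac12$ cannot be proved. You need some $\kappa<\tfrac12$, which is harmless for Theorem~\ref{thm:highvariance}. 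The paper's Case~3.3 (``at most $0.5$'') slips on the same boundary.

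Two smaller points. First, in your Case~2 the claim that ``from any $x_0$ the first step lands at $0$'' is false: from $x_0=3.3(\lambda+1)+1$ with $\eta_0=1.7(\lambda+1)+1$ you get $x_1\approx 1.6(\lambda+1)$. Fix $x_0=1.6(\lambda+1)+1$, as the paper does. Second, your fallback that skipping $\mathcal H_\lambda$ means overshooting low is wrong for $\eta_0$ between $\tfrac1{10}(\lambda+1)$ and $\tfrac12(\lambda+1)$, since such a skip lands in the success region. It is unnecessary, though: you can always choose $x_0$ so that an iterate lands in $\mathcal H_\lambda$, and after an upward kick the return time alone is already exponential.
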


These instances satisfy every assumption, including the safe start, except 
the efficiency of the gradient estimator (Assumption \ref{assumption:second-moment}).   The optimal solution is $x^*(\lambda)=\lambda$. The optimal risk decays at rate
$\lim_{\lambda\to\infty}-\tfrac1\lambda\log p^*(\lambda)=I=1$, matched by
$\gamma(\lambda)=e^{\lambda}$ (Assumption~\ref{assumption:ldp-optimization}). The
average-case loss $f(x)=x$ is convex, twice differentiable, and $L_f$-smooth for any
$L_f>0$ (Assumption~\ref{assumption:smooth-connvex}); its sublevel set
$\{x\ge0\mid x\le\epsilon_f\}$ is bounded (Assumption~\ref{assumption:level-bounded});
and it attains minimum at $\tilde{x}^*=0$ with $f(\tilde{x}^*)=0$
(Assumption~\ref{assumption:lower-bounded}). The risk's gradient and Hessian norms are controlled by the risk itself, in the sense
$\|\nabla p(x)\|=\|\nabla^2 p(x)\|=p(x)$, so
Assumption~\ref{assumption:scale-grad-hessian} holds with $\beta_1=\beta_2=0$. Moreover, 
$F(\cdot;\lambda)$ is convex since $F''(x;\lambda)=e^{\lambda-x}>0$
(Assumption~\ref{assumption:connvex}). Every $x_0(\lambda)\in \mathcal{X}_0(\lambda)$ lies within
$\mathcal{O}(\lambda)$ of $\tilde{x}^*$, so the initialization is sub-exponentially
close (Assumption~\ref{assumption:init-distance}), and it is a safe start
(Assumption~\ref{def:safe-start}): $p(x_0(\lambda))\le e^{-\frac85(\lambda+1)}$ gives
$\limsup_{\lambda\to\infty}\tfrac1\lambda\log p(x_0(\lambda))\le-\tfrac85<-I$.

The gradient estimator, however, is not efficient in the sense of satisfying Assumption~\ref{assumption:second-moment}. The efficiency bound in that assumption requires
$\mathbb{E}\big[\|G_p(x,\xi)-\nabla p(x)\|^2\big]= e^{-x}$ to be smaller than $ \sigma_p(\lambda)^2\,\|\nabla p(x)\|^2=\sigma_p(\lambda)^2 e^{-2x}$, forcing
$\sigma_p(\lambda)^2\ge e^{x}\ge e^{\frac32(\lambda+1)}$ over $\mathcal{H}_\lambda$. This violates the sub-exponential requirement of $\sigma_p$. The noise here only satisfies  the
relaxed absolute bound of Theorem~\ref{thm:highvariance}, namely
$\mathbb{E}\big[\|G_p(x,\xi)-\nabla p(x)\|^2\big]\leq \|\nabla p(x)\|$.

 For any fixed step size $\eta$, we can choose a safe start  $x_0\in \mathcal{X}_0$ that leads to an exponential sample complexity. Let $\tau$ be the stopping time that the iterate reach the target accuracy, i.e., $F(x_\tau;\lambda)\leq 1.5 F(x^*(\lambda);\lambda)$. Because $F(x; \lambda) > x$, reaching the target accuracy requires $x_\tau \leq 1.5(\lambda+1)$. We divide the analysis into three cases.

If the step size is too small ($\eta<e^{-0.1\lambda}$), we pick $x_0=\tfrac85(\lambda+1)+1$. The iterate sits above the high-variance interval, in the noiseless region, where each step lowers $x_t$ by at most $\eta$. That is, before arriving at $x_\tau \leq 1.5(\lambda+1)$, reaching the boundary of $\mathcal{H}_\lambda$ at $\tfrac85(\lambda+1)$ alone takes at least $1/\eta>e^{0.1\lambda}$ iterations.

If the step size is too large ($\eta>\tfrac{17}{10}(\lambda+1)+1$), we again pick $x_0=\tfrac85(\lambda+1)+1$. Now a single gradient step projects the iterate $x_1$ to $0$; then, the next gradient step sends the next iterate $x_2$ to $\eta(e^{\lambda}-1)$, exponentially far from $x^*(\lambda)$. Descent from there is capped at $\eta$ per step, so returning takes $\Omega(e^{\lambda})$ iterations, the same overshoot phenomenon as in Proposition~\ref{prop:unsafe-instance}.

The remaining case is intermediate $\eta\in[e^{-0.1\lambda},\,\tfrac{17}{10}(\lambda+1)+1]$. We can show that there exists $x_0\in \mathcal{X}_0$ such that the next iterate $x_1$ lands in the high-variance interval $\mathcal{H}_\lambda$. Once inside, unless the batch is exponentially large, with probability at least one
half, the noise drives the next iterate $x_2$ either to $0$ or exponentially far past
$\tfrac85(\lambda+1)$. In the former case, the following step $x_3$ is thrown exponentially
far as well; from either position, the number of subsequent iterations needed to meet the
necessary condition $x_\tau \le 1.5(\lambda+1)$ is exponential. In every case, either $\tau$ or $B$ is exponential in $\lambda$.

\section{Experimental Results}
\label{sec:experiment}

We apply safe starts in three experimental settings to highlight and empirically validate our theoretical insights. These settings include the estimation of extreme Value-at-Risk for financial portfolios (Section \ref{subsec:exp-finance}), portfolio optimization using conditional value-at-risk (Section \ref{sec:portfolio optimization}), and robust machine learning using the MAGIC Gamma Telescope data set (Section \ref{subsec:exp-magic}).






\subsection{Value-at-Risk at Extreme Level}

\label{subsec:exp-finance}


We estimate the Value-at-Risk (VaR) at an extreme level for a derivatives portfolio composed of multiple assets.

\paragraph{Problem formulation and variance reduction.}
We adopt a setting similar to that of \cite{glasserman2000variance} and \cite{he2024adaptive}. By the Rockafellar-Uryasev CVaR representation, the VaR is estimated by solving the following optimization problem: $\min_{x \in \mathbb{R}}\prtc{x+\frac{1}{1-q} \mathbb{E}_\xi \prts{ \prtr{L(t, \xi)-x}_+ }}$
where $L(t,\xi)$ is the discounted portfolio loss at time $t$. Because the hinge-loss risk term vanishes as $x$ grows, any sufficiently large initialization is a safe start for this problem. To reduce variance, we apply importance sampling (IS) built from a state-dependent quadratic approximation of the loss \citep{glasserman2000variance}.

\paragraph{Experiment setting.}
The simulation assumes a 250-day trading year, a 10-day risk horizon ($t=0.04$ years), and a risk-free rate of $r=5\%$. The portfolio tracks ten pairwise uncorrelated underlying assets (initial price $100$, volatility $0.3$). Positions include 10 at-the-money calls and 5 at-the-money puts per asset, all with half-year expirations. The target is the $q$-VaR where $q = 1-10^{-5}$. We compare three SGD specifications: a diminishing average-iterate scheme with IS ($\mathbb{A}_{0.1,avg}^{IS}$), a constant last-iterate scheme with IS ($\mathbb{A}_{0,last}^{IS}$), and a constant last-iterate scheme without IS ($\mathbb{A}_{0,last}^{MC}$ using Monte Carlo estimations).

 We evaluate 5 initializations: $x_0 \in \{0, 200, 400,600,800\}$. Each initialization is configured with 5 different initial step sizes: $\eta_0 \in \{10^{-2}, 10^{-1}, 1, 10, 10^{2}\}$. We fix the batch size to $B=10$ and the iteration limit to $T=500$. We then evaluate the convergence probability of each configuration defined as the probability that the final output from that configuration falls within a  tolerance band of $x^* \pm 10$, which is $[372, 392].$ For each configuration, this convergence probability is estimated by evaluating over 100 independent sample paths.

\paragraph{Results of safe start.}
Figure \ref{fig:heatmaps_finance} reports the empirical convergence probabilities for the three SGD specifications. The x-axis represents initialization $x_0$, separated into safe ($x_0 \ge x^* \approx 382$) and unsafe ($x_0 < x^*$) regions, and the y-axis represents the initial step size $\eta_0$. Each cell is colored by its convergence probability, from green (near $1$) to red (near $0$). Figures \ref{fig:heatmaps_finance}a and \ref{fig:heatmaps_finance}b show that the two IS schemes reliably converge to the target VaR when started from a safe initialization with an appropriate step size, whereas unsafe initializations fail regardless of step size. Finally, Figure \ref{fig:heatmaps_finance}c shows that the standard MC scheme fails across all tested configurations, illustrating that variance reduction remains necessary even with a safe start.

\begin{figure}[htbp]
    \centering
    \begin{subfigure}{0.48\textwidth}
        \centering
        \includegraphics[width=\linewidth]{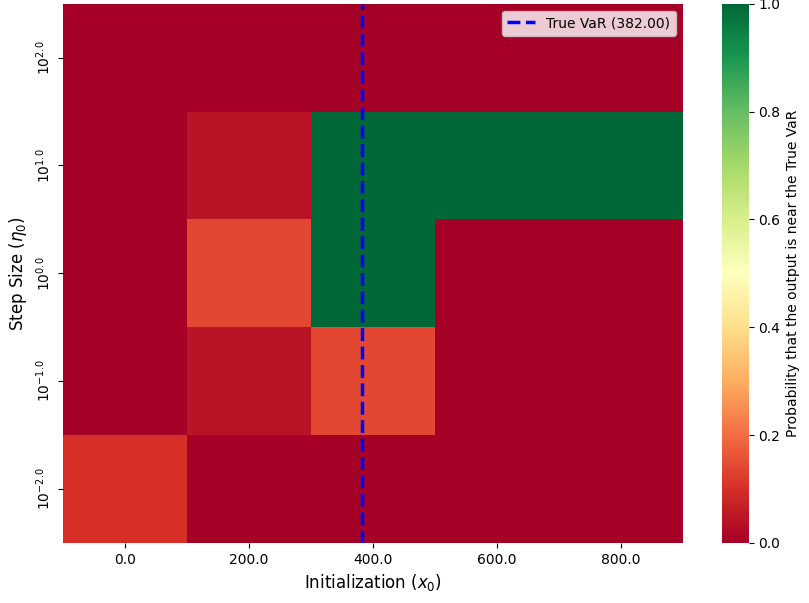}
        \caption{$\mathbb{A}_{0.1,\mathrm{avg}}^{\mathrm{IS}}$}
        \label{fig:heatmaps-a}
    \end{subfigure}\hfill
    \begin{subfigure}{0.48\textwidth}
        \centering
        \includegraphics[width=\linewidth]{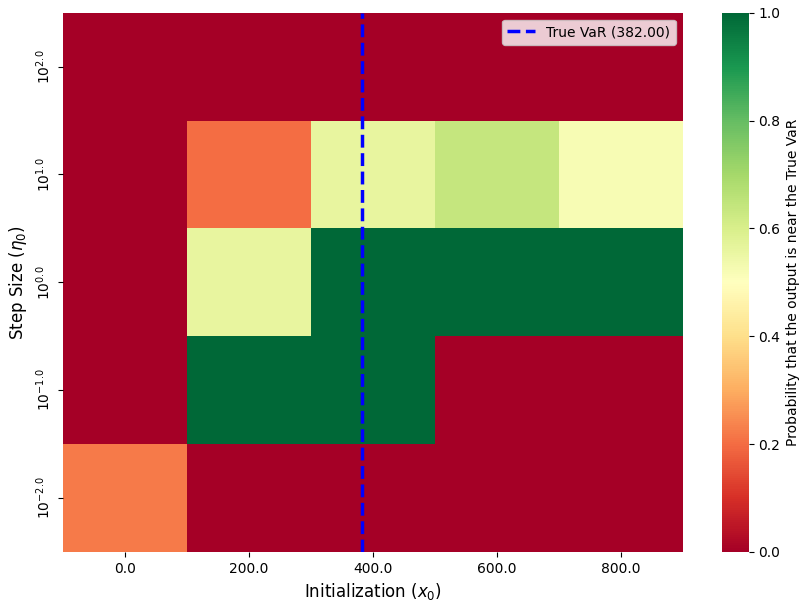}
        \caption{$\mathbb{A}_{0,\mathrm{last}}^{\mathrm{IS}}$}
        \label{fig:heatmaps-b}
    \end{subfigure}

    \vspace{0.5cm} 

    \begin{subfigure}{0.48\textwidth}
        \centering
        \includegraphics[width=\linewidth]{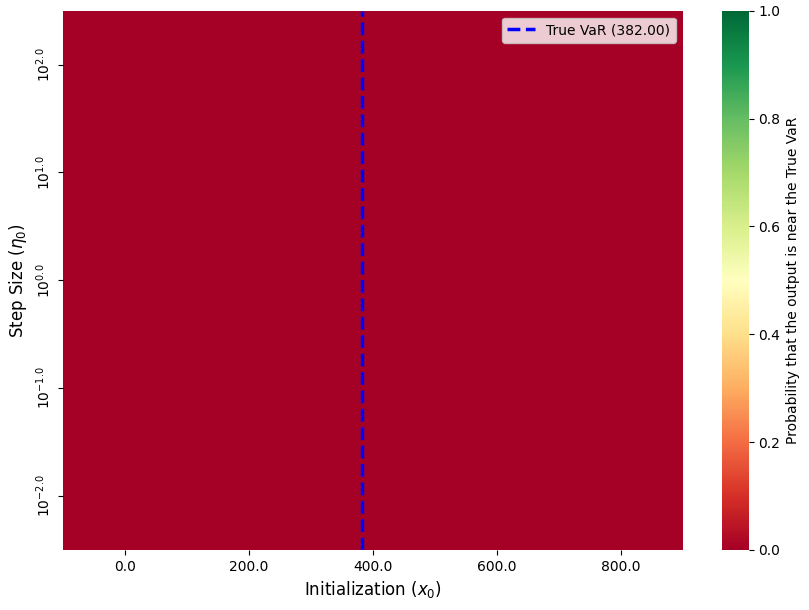}
        \caption{$\mathbb{A}_{0,\mathrm{last}}^{\mathrm{MC}}$}
        \label{fig:heatmaps-c}
    \end{subfigure}
    
  \caption{\textbf{Empirical convergence probability across configurations for portfolio VaR.} Each cell reports the empirical convergence probability at a given initialization $x_0$ (x-axis) and initial step size $\eta_0$ (y-axis), colored from green (near $1$) to red (near $0$). The dashed line marks the target $x^* \approx 382$. The three panels correspond to three SGD specifications: a diminishing average-iterate scheme with IS ($\mathbb{A}_{0.1,avg}^{IS}$), a constant last-iterate scheme with IS ($\mathbb{A}_{0,last}^{IS}$), and a constant last-iterate scheme without IS ($\mathbb{A}_{0,last}^{MC}$).}
    \label{fig:heatmaps_finance}
\end{figure}


\subsection{CVaR Portfolio Optimization}
\label{sec:portfolio optimization}


We extend the use of safe start to a multivariate setting through portfolio optimization. Here, we optimize an asset allocation to achieve minimal Conditional Value-at-Risk (CVaR). We assume the underlying asset prices follow a multivariate Gaussian distribution, allowing us to derive the minimal CVaR analytically and to verify that both a safe start and variance reduction are needed to converge to the minimal.

\paragraph{Problem formulation and variance reduction.}
Consider an $n$-dimensional random vector $R \sim \mathcal{N}(\mu, \Sigma)$ representing the returns of $n$ assets with known mean $\mu$ and covariance matrix $\Sigma$. Our goal is to find a long-only allocation $\mathbf{w} \in \mathbb{R}^n$, i.e., $\mathbf{w} \geq 0$ and $\mathbf{w}^\top \mathbf{1} = 1$, that minimizes the CVaR at level $q = 1 - 10^{-5}$. Like in the previous example, by the Rockafellar–Uryasev representation, we can formulate it as the following optimization problem:
$\min_{\mathbf{w},\, x}\ \left\{ x + \frac{1}{1-q}\, \mathbb{E}_{R}\!\left[\left(-\mathbf{w}^\top R - x\right)_+\right] \right\},$
where $-\mathbf{w}^\top R$ is the realized portfolio loss under allocation $\mathbf{w}$ and $x$ is an auxiliary variable. Under the Gaussian model, the inner CVaR admits a closed form: for a fixed allocation $\mathbf{w}$,
$\mathrm{CVaR}_q(\mathbf{w}) = -\mathbf{w}^\top \mu + \sqrt{\mathbf{w}^\top \Sigma\, \mathbf{w}}\;\frac{\phi\!\left(\Phi^{-1}(q)\right)}{1-q},$
where $\phi$ and $\Phi$ denote the standard normal density and CDF. Minimizing this over the simplex constraint is a convex program that yields the optimal allocation $\mathbf{w}^*$ and hence the exact minimal CVaR. Its VaR can also be derived as $x^* = -(\mathbf{w}^*)^\top \mu + \sqrt{(\mathbf{w}^*)^\top \Sigma\, \mathbf{w}^*}\;\Phi^{-1}(q)$.

To study SGD on this problem, we run projected gradient descent with adaptive variance
reduction. At iteration $t$, given the current iterate $(\mathbf{w}_t, x_t)$, the target event
$\{-\mathbf{w}_t^\top R \geq x_t\}$ becomes rare as the iterate approaches the optimum. We
therefore estimate the gradient by adaptive importance sampling: we sample
$R_{\theta_t} \sim \mathcal{N}\!\left(\mu - \theta_t\,\Sigma \mathbf{w}_t,\ \Sigma\right)$ with
$\theta_t = \frac{x_t + \mathbf{w}_t^\top \mu}{\mathbf{w}_t^\top \Sigma\, \mathbf{w}_t},$
chosen so that the loss $-\mathbf{w}_t^\top R_{\theta_t}$ is centered at the threshold $x_t$.

\paragraph{Experiment setting.}
We generate a random instance of $n = 10$ assets. Each asset's mean return is drawn
from $\mathcal{N}(0,1)$ and its volatility from $\mathrm{Unif}[0.8, 1.6]$, with a common
pairwise correlation of $0.4$ between distinct assets. For this instance, the minimal
CVaR is $F^* = 2.783$, attained at the optimal VaR $x^* = 2.638$.

We consider two SGD specifications: a diminishing last-iterate scheme with IS
($\mathbb{A}^{\mathrm{IS}}_{0.2,\mathrm{last}}$) and one with Monte Carlo estimation
($\mathbb{A}^{\mathrm{MC}}_{0.2,\mathrm{last}}$). Each uses a minibatch of $B = 256$ over
$T = 500$ iterations, with initial step sizes ranging from $10^{-3}$ to $10^{2}$. We
evaluate one safe and one unsafe initialization, both using the uniform allocation
$\mathbf{w}_0 = \mathbf{1}/n$ and differing only in the auxiliary variable $x_0$. The
$q$-VaR of the uniform allocation is $3.337$, which is derived analytically; the safe start sets $x_0 = 6.67$, so that
the initial risk term is of order $O(1-q)$, while the unsafe start sets $x_0$ to the mean
portfolio loss, $x_0 = -0.085$, where the risk term is large. For each configuration
we run $100$ seeds and report the empirical convergence probability, the fraction of runs
whose allocation attains a CVaR within $(1+\varepsilon)F^*$ ($\varepsilon = 0.05$).
 
\paragraph{Results of safe start.} As shown in 
Table~\ref{table:cvar_results}, projected gradient descent converges to the optimal allocation only when it is both
equipped with adaptive variance reduction and initialized at a safe start. The empirical
convergence probability is zero for every configuration with either the unsafe start or without the 
variance reduction scheme.

\begin{table}[ht]
\centering
\caption{Empirical convergence probability for CVaR portfolio optimization across step
sizes, for the two initializations (safe / unsafe) with and without variance reduction
(exponential tilting). Each entry is the fraction of $100$ seeds whose resulting
allocation attains a CVaR within $(1+\varepsilon)F^*$ ($\varepsilon = 0.05$). }
\label{table:cvar_results}
\setlength{\tabcolsep}{10pt}
\begin{tabular}{l cc cc}
\toprule
& \multicolumn{2}{c}{\textbf{With VR ($\mathbb{A}^{\mathrm{IS}}_{0.2,\mathrm{last}}$)}}
& \multicolumn{2}{c}{\textbf{Without VR ($\mathbb{A}^{\mathrm{MC}}_{0.2,\mathrm{last}}$)}} \\
\cmidrule(lr){2-3} \cmidrule(lr){4-5}
\textbf{Step Size}
& \textbf{Unsafe} & \textbf{Safe}
& \textbf{Unsafe} & \textbf{Safe} \\
\midrule
$10^{-3}$ & 0.00 & 0.00          & 0.00 & 0.00 \\
$10^{-2}$ & 0.00 & 0.00          & 0.00 & 0.00 \\
$10^{-1}$ & 0.00 & \textbf{1.00} & 0.00 & 0.00 \\
$10^{0}$  & 0.00 & \textbf{0.98} & 0.00 & 0.00 \\
$10^{1}$  & 0.00 & 0.00          & 0.00 & 0.00 \\
$10^{2}$  & 0.00 & 0.00          & 0.00 & 0.00 \\
\bottomrule
\end{tabular}
\end{table}

\subsection{Statistically Robust Classification on MAGIC Gamma Telescope}

\label{subsec:exp-magic}

This experiment shows that the safe-start notion extends to the training of robust machine learning models. We demonstrate this by training a classifier that stays accurate under input perturbations, a robustness criteria studied extensively in prior work \citep{goodfellow2014explaining, carlini2017towards}. Our focus is the high-robustness regime, where we require the prediction to both achieve high accuracy and remain accurate, with probability near one, under input perturbation .

\paragraph{Accuracy and robustness metrics.}
Consider a robust classification problem where the classifier is parameterized by the decision variable $\vect{x}$. The classifier sees a random input vector $\mathbf{Z}$ with an unobserved label $Y \in \{ 1,2,\ldots ,C\}$ for $C$ categories and classifies it as $\hat{y}(\mathbf{Z}; \vect{x}) \in \{ 1,2,\ldots ,C\}$. The pair $(\mathbf{Z}, Y)$ follows a data distribution $P_{\text{data}}$, from which we observe i.i.d. samples $\prtc{(\mathbf{z}_i, y_i)}_{i}$. We evaluate a classifier $\vect{x}$ by both its prediction accuracy and its robustness to a random input perturbation $\boldsymbol{\epsilon}$, defined as follows:
\begin{enumerate}
    \item \textbf{Prediction accuracy (PA):} $\mathbb{P}_{(\mathbf{Z},Y)}[\hat{y}(\mathbf{Z}; \vect{x}) = Y]$
    \item \textbf{Conditional robust accuracy (CRA):} $\mathbb{E}_{(\mathbf{Z},Y)}[ \mathbb{P}_{\boldsymbol{\epsilon}} (\hat{y}(\mathbf{Z}+\boldsymbol{\epsilon}; \vect{x}) = Y ) \mid \hat{y}(\mathbf{Z}; \vect{x}) = Y]$
\end{enumerate}
Here $\boldsymbol{\epsilon}$ follows some fixed noise distribution. Here, CRA measures robustness in terms of how often a correct prediction survives the perturbation. We condition on $\hat{y}(\mathbf{Z}; \vect{x}) = Y$ because robustness is only meaningful where the classifier is already correct; an input that is misclassified to begin with has no correct prediction to preserve.

\paragraph{Problem formulation.}
To achieve these two criteria simultaneously, we optimize the combined loss function:
\begin{equation}
    F(\vect{x}) := f(\vect{x}) + \gamma p(\vect{x})
\end{equation}
where $f(\vect{x}) := \mathbb{E}_{(\mathbf{Z},Y)}[ \ell(\mathbf{Z},Y; \vect{x}) ]$ is the standard cross-entropy loss and $p(\vect{x}) := \mathbb{E}_{(\mathbf{Z},Y)}[ \mathbb{I}\{ \hat{y}(\mathbf{Z};\vect{x} )=Y\} \, \mathbb{E}_{\boldsymbol{\epsilon}} [ \max_{c \neq Y} g_c(\mathbf{Z}+\boldsymbol{\epsilon}; \vect{x}) - g_Y(\mathbf{Z}+\boldsymbol{\epsilon}; \vect{x})]_+ ]$ acts as the risk term, with $g_c(\cdot; \vect{x})$ the logit output of the network for class $c$. Similar to the hinge penalty in the CVaR formulation, this term activates only when a perturbation pushes an incorrect class score above the true class score. A large penalty coefficient $\gamma$ emphasizes the robustness term, driving the classifier toward high consistency under perturbation.

\paragraph{Dataset and classification model.}
We use the MAGIC Gamma Telescope dataset \citep{asuncion2007uci}, which contains 19,020 data points. Each data point has a 10-dimensional feature vector of an atmospheric image and a label indicating whether it was induced by gamma rays (signal) or cosmic rays (background). The dataset contains $12,332$ signal images and $6,688$ background images. We train a multi-layer perceptron (MLP) with two hidden layers of 20 neurons each to distinguish the two.

\paragraph{Experiment setting and safe start.}
We split the dataset into $14{,}000/5{,}020$ points for training and testing. We set the penalty weight to $\gamma = 10^5$ and optimize for $100$ epochs with a minibatch size of $1{,}024$. We do not apply variance reduction to the risk term; instead, we estimate the perturbation expectation by crude Monte Carlo, drawing $100$ noise samples $\boldsymbol{\epsilon} \sim \mathcal{N}(\mathbf{0}, \sigma^2 \mathbf{I}_{10})$ with $\sigma = 2$ for each point in the minibatch. On the testing set, we draw $100{,}000$ noise samples per point to evaluate the CRA.

We consider two optimizers: a constant-step-size last-iterate SGD ($\mathbb{A}^{\mathrm{MC}}_{0,\mathrm{last}}$) and Adam \citep{kingma2014adam}, both using Monte Carlo gradient estimates. For each, we sweep $6$ initial step sizes: $\prtc{10^{-9},10^{-8}, \dots, 10^{-3}}$ for SGD and $\prtc{10^{-5},10^{-4}, \dots, 10}$ for Adam.

\paragraph{Constructing the safe start.}
We consider one safe initialization $\vect{x}_0^{\text{safe}}$ and one unsafe initialization $\vect{x}_0^{\text{unsafe}}$. The unsafe start is obtained by running the Adam optimizer for $400$ epochs on the average-case cross-entropy loss $f(\vect{x})$ over the training set. To construct the safe start, we take the unsafe start and modify solely the final-layer bias of the positive class, setting it to a constant ($15$ here) large enough that the model predicts almost exclusively that class. By this construction, the safe start incurs a much higher cross-entropy loss but a much smaller initial tail risk: by favoring one class, its predictions stay consistent under perturbation. Table \ref{table:magic_init} reports the initial conditions of each initialization.
\begin{table}[ht]
\centering
\caption{Conditions of the two initializations. }
\label{table:magic_init}
\begin{tabular}{l l l l l}
\toprule
Initialization  & $f(\vect{x}_0)$ & $p(\vect{x}_0)$ & PA & CRA\\
\midrule
\textbf{Unsafe start ($\vect{x}_0^{\text{unsafe}}$)}  & 0.34 & \textbf{1.50}  & 84.54\% & 64.60\% \\
\textbf{Safe start ($\vect{x}_0^{\text{safe}}$)}& 4.01 & \textbf{0.019}  & 66.53\% & 99.24\% \\
\bottomrule
\end{tabular}
\end{table}

\paragraph{Results of safe start.}
Our goal is to achieve both high standard accuracy ($>80\%$) and high conditional robustness ($\geq 99\%$). As shown in Table \ref{table:magic_results}, SGD with the unsafe start does not reach these targets at any step size we tested. In contrast, SGD with the safe start reaches them at initial step sizes $10^{-6}$ and $10^{-7}$. While the unsafe start fails under SGD, it does reach the targets at two step sizes under Adam. We attribute this to Adam's per-coordinate normalization, which damps the oversized updates behind the overshoot failure investigated  in  Theorem \ref{thm:unsafe}. Even so, the safe start remains advantageous, offering a wider band of viable step sizes.

\begin{table}[ht]
\centering
\caption{Prediction accuracy (PA) and conditional robust accuracy (CRA) across optimizers and step sizes, reported as PA / CRA. Values are the mean over 5 independent seeds, with standard deviation in parentheses. Bold indicates configurations achieving both high standard accuracy ($>80\%$) and conditional robustness ($\geq 99\%$). SGD and Adam are swept over different step-size ranges.}
\label{table:magic_results}
\resizebox{\textwidth}{!}{%
\begin{tabular}{c c c c c c}
\toprule
\multicolumn{3}{c}{\textbf{SGD} ($\mathbb{A}^{\mathrm{MC}}_{0,\mathrm{last}}$)} & \multicolumn{3}{c}{\textbf{Adam}} \\
\cmidrule(lr){1-3} \cmidrule(lr){4-6}
\textbf{Step Size} & \textbf{Unsafe ($\vect{x}_0^{\text{unsafe}}$)} & \textbf{Safe ($\vect{x}_0^{\text{safe}}$)} & \textbf{Step Size} & \textbf{Unsafe ($\vect{x}_0^{\text{unsafe}}$)} & \textbf{Safe ($\vect{x}_0^{\text{safe}}$)} \\
\midrule
$10^{-3}$ & 64.0 (0.0) / 100.0 (0.0) & 36.0 (0.0) / 100.0 (0.0) & $10^{1}$ & 64.0 (0.0) / 100.0 (0.0) & 36.0 (0.0) / 100.0 (0.0) \\
$10^{-4}$ & 64.0 (0.0) / 100.0 (0.0) & 58.4 (11.2) / 100.0 (0.0) & $10^{0}$ & 41.6 (11.2) / 100.0 (0.0) & 58.4 (11.2) / 100.0 (0.0) \\
$10^{-5}$ & 64.0 (0.0) / 100.0 (0.0) & 65.7 (3.5) / 99.9 (0.1) & $10^{-1}$ & 58.4 (11.2) / 100.0 (0.0) & 64.1 (15.4) / 99.7 (0.3) \\
$10^{-6}$ & 69.7 (17.1) / 99.2 (0.6) & \textbf{80.5 (2.2) / 99.2 (0.3)} & $10^{-2}$ & \textbf{80.9 (2.0) / 99.2 (0.3)} & \textbf{82.7 (0.3) / 99.1 (0.1)} \\
$10^{-7}$ & 79.5 (0.5) / 98.6 (0.1) & \textbf{80.5 (0.2) / 99.0 (0.1)} & $10^{-3}$ & \textbf{80.9 (0.8) / 99.1 (0.2)} & \textbf{82.8 (0.3) / 99.1 (0.1)} \\
$10^{-8}$ & 78.7 (0.1) / 97.4 (0.1) & 79.9 (0.1) / 98.1 (0.0) & $10^{-4}$ & 76.4 (0.1) / 97.3 (0.1) & \textbf{80.7 (0.1) / 99.0 (0.1)} \\
$10^{-9}$ & 77.6 (0.0) / 95.2 (0.0) & 75.1 (0.0) / 98.0 (0.0) & $10^{-5}$ & 76.1 (0.0) / 94.8 (0.0) & 72.3 (0.0) / 98.9 (0.0) \\
\bottomrule
\end{tabular}
}
\end{table}

\section{Conclusions}
While rare-event estimation has been long studied, rare-event optimization, which is important for safe AI training and decision-making, has been largely open. To this end, a major challenge arises from the breakdown of the established rare-event analysis framework that translates variance-based efficiency criteria into sampling complexity. Our paper provides a first bridge on this front, by eliciting both negative and positive results: our negative result dictates that adaptive variance reduction, when naturally embedded into SGD, does not suffice to guarantee optimization efficiency. On the other hand, our positive result reveals that, with our new notion of \emph{safe start} that initializes a solution with low risk level, a variety of adaptive variance-reduced SGD configurations become efficient. These results are drawn from our insights on the ultra-sensitivity of the objective landscape arising from the extreme-risk term, which renders SGD step size being either too small and hence exhibit slow convergence, or risking catastrophic big jumps, unless safe start is imposed to stabilize the landscape. Experiments on extreme quantile estimation, CVaR portfolio optimization, and robust neural network training corroborate our theory. These also show that safe start is simple to configure and crucial in rare-event optimization.




\bibliographystyle{informs2014} 
\bibliography{reference}

\newpage 


\ECSwitch
\ECHead{Proofs of Statements}

\section{Proofs of Lemmas \ref{lem:safe-region-regularity} and \ref{lemma:subexponential-constants}.}
\label{appendix-sec:prelim}

We divide the proof of Lemma \ref{lem:safe-region-regularity} into three parts: Lemma \ref{lemma:bounded-distance-to-optimal} in Appendix \ref{subsection:bounded}, Lemma \ref{lemma:smooth-over-sublevel} in Appendix \ref{subsec:lipschitz}, and Lemma \ref{lemma:second-moment} in Appendix \ref{subsec:second-moment}. Then, we provide the proof of Lemma \ref{lemma:subexponential-constants} in Appendix \ref{subsec:expo}.

\subsection{Boundedness.}
\label{subsection:bounded}

To prove the bounded distance to the optimal solution $\mathbf{x}^*(\lambda)$ in Lemma \ref{lemma:bounded-distance-to-optimal}, we first show that the distance to to the average-case minimizer $\tilde{\mathbf{x}}^*$ 
is bounded (Lemma \ref{lemma:bounded-sublevel}); then, we show that the optimal solution $\mathbf{x}^*(\lambda)$ exists (Lemma \ref{lemma:existence}).

\begin{lemma}[Bounded Sublevel Set]\label{lemma:bounded-sublevel}
Under Assumptions \ref{assumption:smooth-connvex}--\ref{assumption:lower-bounded}, let
$d_f, \epsilon_f,$ and $\tilde{\mathbf{x}}^*$ be as defined therein. Consider the initial sublevel set
$S := S(\mathbf{x}_0, \lambda, c)$. For all $\mathbf{x} \in S$, we have:
\begin{equation*}
    \|\mathbf{x} - \tilde{\mathbf{x}}^*\| \le D_S,
    \qquad \text{where } D_S := \frac{c d_f F(\mathbf{x}_0; \lambda)}{\epsilon_f} + d_f.
\end{equation*}
\end{lemma}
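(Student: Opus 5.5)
Fix $\mathbf{x}\in S$, so that $F(\mathbf{x};\lambda)\le cF(\mathbf{x}_0;\lambda)$. Since $\gamma(\lambda)p(\mathbf{x})\ge 0$, this gives $f(\mathbf{x})\le cF(\mathbf{x}_0;\lambda)=:M$. The plan is to combine convexity of $f$ along the segment from $\tilde{\mathbf{x}}^*$ to $\mathbf{x}$ with the sublevel-boundedness of Assumption~\ref{assumption:level-bounded}. This converts the value bound $f(\mathbf{x})\le M$ into a distance bound, scaling the $\epsilon_f$-sublevel diameter $d_f$ by $M/\epsilon_f$.

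\textbf{Case $f(\mathbf{x})\le\epsilon_f$.} Both $\mathbf{x}$ and $\tilde{\mathbf{x}}^*$ lie in the $\epsilon_f$-sublevel set, because $f(\tilde{\mathbf{x}}^*)=0\le\epsilon_f$ by Assumption~\ref{assumption:lower-bounded}. Hence $\|\mathbf{x}-\tilde{\mathbf{x}}^*\|\le d_f\le D_S$ directly.

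\textbf{Case $f(\mathbf{x})>\epsilon_f$.} Set $\theta:=\epsilon_f/f(\mathbf{x})\in(0,1)$ and $\mathbf{y}:=\tilde{\mathbf{x}}^*+\theta(\mathbf{x}-\tilde{\mathbf{x}}^*)$. The point $\mathbf{y}$ lies in $\mathcal{X}$ because $\mathcal{X}$ is convex (Assumption~\ref{assumption:smooth-connvex}). Convexity of $f$ together with $f(\tilde{\mathbf{x}}^*)=0$ gives
\[
f(\mathbf{y})\le(1-\theta)f(\tilde{\mathbf{x}}^*)+\theta f(\mathbf{x})=\epsilon_f .
\]
So $\mathbf{y}$ and $\tilde{\mathbf{x}}^*$ both lie in the $\epsilon_f$-sublevel set, and therefore $\theta\|\mathbf{x}-\tilde{\mathbf{x}}^*\|=\|\mathbf{y}-\tilde{\mathbf{x}}^*\|\le d_f$. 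Rearranging,
\[
\|\mathbf{x}-\tilde{\mathbf{x}}^*\|\le \frac{d_f f(\mathbf{x})}{\epsilon_f}\le \frac{c\,d_f F(\mathbf{x}_0;\lambda)}{\epsilon_f}\le D_S .
\]

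Combining the two cases gives the claim for every $\mathbf{x}\in S$. The only delicate point is the rescaling step. It must use the minimizer $\tilde{\mathbf{x}}^*$, where $f$ vanishes, as the anchor, so that convexity yields the clean proportional bound $f(\mathbf{y})\le\theta f(\mathbf{x})$. The argument also uses $\gamma(\lambda)>0$ and $p\ge 0$ to pass from the bound on $F$ to the bound on $f$, both of which are part of the setup.
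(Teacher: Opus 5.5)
Your proof is correct and follows the paper's argument. It uses the same two-case split on whether $f(\mathbf{x})\le\epsilon_f$, and applies convexity of $f$ along the segment from $\tilde{\mathbf{x}}^*$ (where $f=0$) to $\mathbf{x}$ together with sublevel-boundedness to turn $f(\mathbf{x})\le cF(\mathbf{x}_0;\lambda)$ into the distance bound. The only difference is cosmetic. You choose $\theta=\epsilon_f/f(\mathbf{x})$ explicitly and get $f(\mathbf{y})\le\epsilon_f$ directly from convexity, whereas the paper invokes the Intermediate Value Theorem to find a point with $f=\epsilon_f$ exactly, so your version avoids that continuity step.
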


\begin{proof}{Proof.}
For any $\mathbf{x} \in S$, we consider two cases.

\textbf{Case 1:} $f(\mathbf{x}) \le \epsilon_f$. Assumption \ref{assumption:level-bounded} immediately
implies $\|\mathbf{x} - \tilde{\mathbf{x}}^*\| \le d_f \le D_S$.

\textbf{Case 2:} $f(\mathbf{x}) > \epsilon_f$. Since $f$ is convex over the convex domain $\mathcal{X}$,
it is continuous, and the segment between $\mathbf{x}$ and $\tilde{\mathbf{x}}^*$ lies in $\mathcal{X}$.
By the Intermediate Value Theorem there exists $\tau \in (0,1)$ such that for
$\mathbf{x}_\tau := \tau \tilde{\mathbf{x}}^* + (1-\tau)\mathbf{x}$ we have $f(\mathbf{x}_\tau) = \epsilon_f$,
with $1-\tau = \frac{\|\mathbf{x}_\tau - \tilde{\mathbf{x}}^*\|}{\|\mathbf{x} - \tilde{\mathbf{x}}^*\|}$.
By convexity of $f$,
\begin{equation*}
    \tau f(\tilde{\mathbf{x}}^*) + (1-\tau)f(\mathbf{x}) \ge f(\mathbf{x}_\tau) = \epsilon_f.
\end{equation*}
Under Assumption \ref{assumption:lower-bounded}, $f(\tilde{\mathbf{x}}^*) = 0$, so
\begin{equation*}
    (1-\tau)f(\mathbf{x}) \ge \epsilon_f
    \implies \frac{\|\mathbf{x}_\tau - \tilde{\mathbf{x}}^*\|}{\|\mathbf{x} - \tilde{\mathbf{x}}^*\|} f(\mathbf{x}) \ge \epsilon_f.
\end{equation*}
Because $f(\mathbf{x}_\tau) = \epsilon_f$, Assumption \ref{assumption:level-bounded} gives
$\|\mathbf{x}_\tau - \tilde{\mathbf{x}}^*\| \le d_f$, hence
$\frac{d_f}{\|\mathbf{x} - \tilde{\mathbf{x}}^*\|} f(\mathbf{x}) \ge \epsilon_f$. Since $\mathbf{x} \in S$ and
$p(\cdot) \ge 0$, $f(\mathbf{x}) \le F(\mathbf{x}; \lambda) \le c F(\mathbf{x}_0; \lambda)$, so
\begin{equation*}
    \|\mathbf{x} - \tilde{\mathbf{x}}^*\| \le \frac{c d_f F(\mathbf{x}_0; \lambda)}{\epsilon_f} \le D_S.
\end{equation*}
Combining the two cases gives the bound for all $\mathbf{x} \in S$.
\Halmos \end{proof}

\begin{lemma}[Existence of an Optimal Solution]\label{lemma:existence}
Under the assumptions of Lemma \ref{lemma:bounded-sublevel}, if $\mathcal{X}$ is
non-empty, there exists an optimal solution $\mathbf{x}^*(\lambda) \in \mathcal{X}$
that minimizes $F(\mathbf{x}; \lambda)$ over $\mathcal{X}$.
\end{lemma}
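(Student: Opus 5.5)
We restrict the minimization of $F(\cdot;\lambda)$ to a nonempty compact set and then apply the Weierstrass Extreme Value Theorem. Fix any point $\mathbf{x}_0 \in \mathcal{X}$; this is possible because $\mathcal{X}$ is non-empty. Consider the sublevel set $S := S(\mathbf{x}_0,\lambda,1) = \{\mathbf{x}\in\mathcal{X} \mid F(\mathbf{x};\lambda)\le F(\mathbf{x}_0;\lambda)\}$. It contains $\mathbf{x}_0$ and is therefore non-empty. Lemma \ref{lemma:bounded-sublevel} with $c=1$ shows that every $\mathbf{x}\in S$ satisfies $\|\mathbf{x}-\tilde{\mathbf{x}}^*\|\le D_S$, so $S$ is bounded. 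Its proof uses only Assumptions \ref{assumption:smooth-connvex}--\ref{assumption:lower-bounded} and nonnegativity of $p$.

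For closedness we need continuity of $F(\cdot;\lambda)$ on $\mathcal{X}$. The function $f$ is twice continuously differentiable by Assumption \ref{assumption:smooth-connvex}. The risk $p$ is continuous because it is differentiable; its twice continuous differentiability is Assumption \ref{assumption:scale-grad-hessian}. If one wants to stay strictly within the assumptions listed for Lemma \ref{lemma:bounded-sublevel}, the fallback is Assumption \ref{assumption:connvex}: $F$ is convex and finite on $\mathcal{X}$, so it is continuous on the relative interior. Either way, I will take continuity of $p$ (hence of $F$) from the standing assumptions.

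With continuity in hand, $S$ is the preimage of the closed set $(-\infty, F(\mathbf{x}_0;\lambda)]$ under a continuous map on the closed set $\mathcal{X}$, so $S$ is closed in $\mathbb{R}^n$. Being closed and bounded, $S$ is compact. Weierstrass then gives a minimizer $\mathbf{x}^*(\lambda)\in S$ of $F(\cdot;\lambda)$ over $S$. This point is also a global minimizer over $\mathcal{X}$. Indeed, any $\mathbf{x}\in\mathcal{X}\setminus S$ has $F(\mathbf{x};\lambda) > F(\mathbf{x}_0;\lambda) \ge F(\mathbf{x}^*(\lambda);\lambda)$.

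The only delicate step is justifying continuity of $p$ on all of $\mathcal{X}$, including boundary points, which is needed for $S$ to be closed. Everything else follows directly from Lemma \ref{lemma:bounded-sublevel}.
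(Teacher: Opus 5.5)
Your proof is correct and follows the paper's argument: you take the sublevel set anchored at an arbitrary $\mathbf{x}_0$, get boundedness from Lemma~\ref{lemma:bounded-sublevel}, closedness from continuity of $F$, and conclude with Weierstrass; you also spell out why the minimizer over $S$ is a global minimizer over $\mathcal{X}$, which the paper leaves implicit. One caution: your convexity fallback would not by itself make $S$ closed, because a convex function can jump upward at boundary points of $\mathcal{X}$. What is actually needed is the continuity of $p$ from Assumption~\ref{assumption:scale-grad-hessian}, which the paper also invokes implicitly when it writes that $F$ is continuous.
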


\begin{proof}{Proof.}
Consider $S := S(\mathbf{x}_0, \lambda, c)$ for an arbitrary $\mathbf{x}_0 \in \mathcal{X}$. Because $F$ is
convex and continuous, $S$ is non-empty and closed. By Lemma \ref{lemma:bounded-sublevel},
$S \subseteq \prtc{ \mathbf{x} \in \mathcal{X} \mid \|\mathbf{x} - \tilde{\mathbf{x}}^*\| \le D_S }$, so $S$
is bounded, hence compact in $\mathbb{R}^n$. By the Weierstrass Extreme Value Theorem, $F$ attains its
minimum on $S$, ensuring the existence of an optimal solution $\mathbf{x}^*(\lambda) \in \mathcal{X}$.
\Halmos \end{proof}

Finally, we bound the distance from any point in the sublevel set to the extreme-risk optimum.

\begin{lemma}[Bounded Optimality Radius]\label{lemma:bounded-distance-to-optimal}
Under the assumptions of Lemma \ref{lemma:bounded-sublevel}, let $D_S$ be as therein.
Consider the initial sublevel set $S := S(\mathbf{x}_0, \lambda, c)$. For all
$\mathbf{x} \in S$, we have $\|\mathbf{x} - \mathbf{x}^*(\lambda)\| \le B_S$, where $B_S := 2 D_S$.
\end{lemma}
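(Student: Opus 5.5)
The plan is to combine Lemma \ref{lemma:bounded-sublevel} with the existence result of Lemma \ref{lemma:existence}, using the triangle inequality through the average-case minimizer $\tilde{\mathbf{x}}^*$. For any $\mathbf{x}\in S$ we write
\[
\|\mathbf{x}-\mathbf{x}^*(\lambda)\| \le \|\mathbf{x}-\tilde{\mathbf{x}}^*\| + \|\tilde{\mathbf{x}}^*-\mathbf{x}^*(\lambda)\|.
\]
Lemma \ref{lemma:bounded-sublevel} bounds the first term by $D_S$. It therefore suffices to show that $\mathbf{x}^*(\lambda)$ itself lies in $S$; applying Lemma \ref{lemma:bounded-sublevel} to $\mathbf{x}^*(\lambda)$ then bounds the second term by $D_S$ as well, giving $B_S=2D_S$.

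\textbf{Step 1.} Invoke Lemma \ref{lemma:existence} to guarantee that an optimal solution $\mathbf{x}^*(\lambda)\in\mathcal{X}$ exists.

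\textbf{Step 2.} Check membership $\mathbf{x}^*(\lambda)\in S(\mathbf{x}_0,\lambda,c)$. By optimality, $F(\mathbf{x}^*(\lambda);\lambda)\le F(\mathbf{x}_0;\lambda)$. Since $f\ge 0$ by Assumption \ref{assumption:lower-bounded} and $p\ge 0$, we have $F(\mathbf{x}_0;\lambda)\ge 0$. Together with $c\ge1$ this gives $F(\mathbf{x}^*(\lambda);\lambda)\le cF(\mathbf{x}_0;\lambda)$. Nonnegativity of $F$ is what makes multiplying by $c\ge 1$ harmless, so this is the one point to state carefully.

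\textbf{Step 3.} Apply Lemma \ref{lemma:bounded-sublevel} to both $\mathbf{x}$ and $\mathbf{x}^*(\lambda)$, then sum the two bounds via the triangle inequality.

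There is no real obstacle here. The only subtle points are that $\mathbf{x}^*(\lambda)$ must exist, which Lemma \ref{lemma:existence} supplies, and that it must lie in the same sublevel set, which Step 2 supplies. Everything else is the triangle inequality.
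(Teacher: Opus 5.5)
Your proposal is correct and follows the paper's proof essentially verbatim: existence of $\mathbf{x}^*(\lambda)$ via Lemma \ref{lemma:existence}, membership $\mathbf{x}^*(\lambda)\in S$ from optimality and $c\ge 1$, and the triangle inequality through $\tilde{\mathbf{x}}^*$ with Lemma \ref{lemma:bounded-sublevel} applied to both points. Your explicit remark that $F(\mathbf{x}_0;\lambda)\ge 0$ is what justifies $F(\mathbf{x}^*(\lambda);\lambda)\le F(\mathbf{x}_0;\lambda)\le cF(\mathbf{x}_0;\lambda)$ is a detail the paper leaves implicit.
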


\begin{proof}{Proof.}
By Lemma \ref{lemma:existence}, an optimal solution $\mathbf{x}^*(\lambda)$ exists. Since
$\mathbf{x}^*(\lambda)$ minimizes $F(\cdot; \lambda)$ and $c \ge 1$, $F(\mathbf{x}^*(\lambda); \lambda) \le
c F(\mathbf{x}_0; \lambda)$, so $\mathbf{x}^*(\lambda) \in S$. Applying the triangle inequality and
Lemma \ref{lemma:bounded-sublevel} to both $\mathbf{x}$ and $\mathbf{x}^*(\lambda)$,
\begin{equation*}
    \|\mathbf{x} - \mathbf{x}^*(\lambda)\|
    \le \|\mathbf{x} - \tilde{\mathbf{x}}^*\| + \|\tilde{\mathbf{x}}^* - \mathbf{x}^*(\lambda)\|
    \le D_S + D_S = 2D_S = B_S.
\end{equation*}
\Halmos \end{proof}

\subsection{Lipschitz Smoothness.}
\label{subsec:lipschitz}


\begin{lemma}[Lipschitz Smoothness over Sublevel Set]\label{lemma:smooth-over-sublevel}
Under the assumptions of Lemma \ref{lemma:bounded-sublevel} together with
Assumption \ref{assumption:scale-grad-hessian}, let $L_f$, $L_{p,2}$, and $\beta_2$ be as
defined therein, and let $D_S$ be as in Lemma \ref{lemma:bounded-sublevel}. Consider the
initial sublevel set $S(\mathbf{x}_0, \lambda, c)$. For all $\mathbf{x} \in S$,
\begin{align*}
    \|\nabla^2 F(\mathbf{x})\| \le L_S,
\end{align*}
 where
$L_S := L_f + c L_{p,2} \prtr{1 + D_S^{\beta_2}} F(\mathbf{x}_0; \lambda)$ is the local
Lipschitz constant of $\nabla F$ over $S$. That is, $F$ is $L_S$-smooth over $S$.
\end{lemma}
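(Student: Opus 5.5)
The bound follows from the decomposition $\nabla^2 F(\mathbf{x};\lambda) = \nabla^2 f(\mathbf{x}) + \gamma(\lambda)\nabla^2 p(\mathbf{x})$ and the triangle inequality for the operator norm. I will bound the two terms separately at an arbitrary $\mathbf{x}\in S$. The average-case term is immediate: Assumption~\ref{assumption:smooth-connvex} gives $\|\nabla^2 f(\mathbf{x})\|\le L_f$ on all of $\mathcal{X}$, so it contributes the constant $L_f$ with no dependence on $S$.

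For the risk term, Assumption~\ref{assumption:scale-grad-hessian} gives
\[
\gamma(\lambda)\|\nabla^2 p(\mathbf{x})\| \le L_{p,2}\bigl(1+\|\mathbf{x}-\tilde{\mathbf{x}}^*\|^{\beta_2}\bigr)\,\gamma(\lambda)p(\mathbf{x}).
\]
Two facts control the right-hand side on $S$.

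\begin{enumerate}
\item Lemma~\ref{lemma:bounded-sublevel} gives $\|\mathbf{x}-\tilde{\mathbf{x}}^*\|\le D_S$. Since $\beta_2\ge 0$, the map $r\mapsto r^{\beta_2}$ is nondecreasing on $[0,\infty)$, so $1+\|\mathbf{x}-\tilde{\mathbf{x}}^*\|^{\beta_2}\le 1+D_S^{\beta_2}$. In the degenerate case $\beta_2=0$ both sides equal $2$, so the inequality still holds.
\item Assumption~\ref{assumption:lower-bounded} makes $f\ge 0$ on $\mathcal{X}$, because $\tilde{\mathbf{x}}^*$ is a global minimizer with value zero. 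Hence $\gamma(\lambda)p(\mathbf{x})\le f(\mathbf{x})+\gamma(\lambda)p(\mathbf{x}) = F(\mathbf{x};\lambda)\le cF(\mathbf{x}_0;\lambda)$, where the last step is the definition of $S$.
\end{enumerate}

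Multiplying these bounds yields $\gamma(\lambda)\|\nabla^2 p(\mathbf{x})\|\le cL_{p,2}(1+D_S^{\beta_2})F(\mathbf{x}_0;\lambda)$. Adding the $L_f$ term gives $\|\nabla^2 F(\mathbf{x};\lambda)\|\le L_S$ for every $\mathbf{x}\in S$.

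The one step needing care is the final claim that $F$ is $L_S$-smooth over $S$, read as a Lipschitz bound on $\nabla F$ between points of $S$. Here I use Assumption~\ref{assumption:connvex}: $F$ is convex and continuous, so $S$ is a convex set. For $\mathbf{x},\mathbf{y}\in S$ the whole segment between them therefore lies in $S$. The mean-value integral $\nabla F(\mathbf{y})-\nabla F(\mathbf{x})=\int_0^1\nabla^2F(\mathbf{x}+s(\mathbf{y}-\mathbf{x}))(\mathbf{y}-\mathbf{x})\,ds$ then gives $\|\nabla F(\mathbf{y})-\nabla F(\mathbf{x})\|\le L_S\|\mathbf{y}-\mathbf{x}\|$. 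The integral formula is valid because $f$ and $p$ are both $C^2$ by Assumptions~\ref{assumption:smooth-connvex} and~\ref{assumption:scale-grad-hessian}. Everything else is direct substitution of earlier assumptions, so I expect no real obstacle.
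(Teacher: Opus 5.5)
Your proof is correct and follows the paper's argument essentially line for line: the triangle inequality on $\nabla^2 F = \nabla^2 f + \gamma(\lambda)\nabla^2 p$, the bound $L_f$ for the first term, and the relative Hessian bound combined with $\|\mathbf{x}-\tilde{\mathbf{x}}^*\|\le D_S$ and $\gamma(\lambda)p(\mathbf{x})\le F(\mathbf{x};\lambda)\le cF(\mathbf{x}_0;\lambda)$ (using $f\ge 0$) for the second. Your extra segment argument goes beyond the paper, which simply takes ``$L_S$-smooth over $S$'' to mean the Hessian bound itself; that step relies on Assumption~\ref{assumption:connvex}, which is not among this lemma's stated hypotheses (without it $S$ need not be convex), although that assumption is in force in every later lemma where the quadratic upper bound is actually applied.
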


\begin{proof}{Proof.}
By the triangle inequality, we have:
\begin{align*}
    \sup_{\mathbf{x}\in S} \|\nabla^2 F(\mathbf{x})\| 
    &= \sup_{\mathbf{x}\in S} \|\nabla^2 f(\mathbf{x}) + \gamma(\lambda) \nabla^2 p(\mathbf{x})\| \\
    &\le \sup_{\mathbf{x}\in S} \|\nabla^2 f(\mathbf{x})\| + \sup_{\mathbf{x}\in S} \prtr{ \gamma(\lambda) \|\nabla^2 p(\mathbf{x})\| }
\end{align*}
Under Assumptions \ref{assumption:smooth-connvex} and \ref{assumption:scale-grad-hessian}, this becomes:
\begin{equation*}
    \sup_{\mathbf{x}\in S} \|\nabla^2 F(\mathbf{x})\| \le L_f + \sup_{\mathbf{x}\in S} \prtr{ \gamma(\lambda) L_{p,2} \prtr{1 + \norm{\mathbf{x} - \tilde{\mathbf{x}}^*}^{\beta_2}} p(\mathbf{x}) }
\end{equation*}
By Lemma \ref{lemma:bounded-sublevel}, every $\mathbf{x} \in S$ satisfies $\norm{\mathbf{x} - \tilde{\mathbf{x}}^*} \le D_S$, hence $1 + \norm{\mathbf{x} - \tilde{\mathbf{x}}^*}^{\beta_2} \le 1 + D_S^{\beta_2}$. Moreover, under Assumption \ref{assumption:lower-bounded}, $f(\mathbf{x}) \ge 0$ gives $\gamma(\lambda) p(\mathbf{x}) \le F(\mathbf{x}; \lambda)$, and because $\mathbf{x} \in S$, $F(\mathbf{x};\lambda) \le c F(\mathbf{x}_0;\lambda)$. Combining these two bounds,
\begin{equation*}
    \sup_{\mathbf{x}\in S} \prtr{ \gamma(\lambda) L_{p,2} \prtr{1 + \norm{\mathbf{x} - \tilde{\mathbf{x}}^*}^{\beta_2}} p(\mathbf{x}) } \le c L_{p,2} \prtr{1 + D_S^{\beta_2}} F(\mathbf{x}_0; \lambda).
\end{equation*}
Therefore,
\begin{equation*}
    \sup_{\mathbf{x}\in S} \|\nabla^2 F(\mathbf{x})\| \le L_f + c L_{p,2} \prtr{1 + D_S^{\beta_2}} F(\mathbf{x}_0; \lambda).
\end{equation*}
\Halmos \end{proof}

\subsection{Bounded Variance.}
\label{subsec:second-moment}

\begin{lemma}[Second-Moment Bounds on Stochastic Gradients and Noise]\label{lemma:second-moment}
Under Assumptions \ref{assumption:second-moment} and \ref{assumption:scale-grad-hessian}
together with the assumptions of Lemma \ref{lemma:bounded-sublevel}, let
$G_f(\mathbf{x},\xi)$, $G_p(\mathbf{x},\xi; \lambda)$, $\sigma_f$, $\sigma_p(\lambda)$,
$L_{p,1}$, and $\beta_1$ be as defined therein, and let $D_S$ be as in
Lemma \ref{lemma:bounded-sublevel}, for any rarity level $\lambda >0$. Consider the initial
sublevel set $S(\mathbf{x}_0, \lambda, c)$. Denote the stochastic gradient estimator and the
stochastic noise for $\nabla F(\mathbf{x})$ as $G(\mathbf{x},\xi)$ and $\mathbf{w}(\mathbf{x},\xi)$, respectively, where:
\begin{align*}
    G(\mathbf{x},\xi) &:= G_f(\mathbf{x},\xi) + \gamma(\lambda) G_p(\mathbf{x},\xi; \lambda) \\
    \mathbf{w}(\mathbf{x},\xi) &:= G(\mathbf{x},\xi) - \nabla F(\mathbf{x})
\end{align*}
Then, for all $\mathbf{x} \in S$, we have:
\begin{equation*}
    \mathbb{E}\prts{\| \mathbf{w}(\mathbf{x}, \xi) \|^2} \le \sigma_S^2(\lambda)
\end{equation*}
and 
\begin{equation*}
    \mathbb{E}\prts{\|G(\mathbf{x}, \xi)\|^2} \le \|\nabla F(\mathbf{x})\|^2 + \sigma_S^2(\lambda)
\end{equation*}
where $\sigma_S(\lambda) := \sigma_f + c \sigma_p(\lambda) L_{p,1} \prtr{1 + D_S^{\beta_1}} F(\mathbf{x}_0; \lambda)$ is a single-sample sublevel variance bound.
\end{lemma}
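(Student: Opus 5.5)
The plan is to decompose the noise into its two independent-of-structure pieces and bound each using the assumptions, mirroring the proof of Lemma \ref{lemma:smooth-over-sublevel}. First, unbiasedness: since $G_f$ and $G_p$ are unbiased for $\nabla f$ and $\nabla p$ (Assumption \ref{assumption:second-moment}), the noise $\mathbf{w}(\mathbf{x},\xi)$ has zero mean. It splits as $\mathbf{w} = \mathbf{w}_f + \gamma(\lambda)\mathbf{w}_p$, where $\mathbf{w}_f := G_f - \nabla f$ and $\mathbf{w}_p := G_p - \nabla p$.

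Second, I will bound the $L_2$ norm $\sqrt{\mathbb{E}\|\mathbf{w}\|^2}$ by Minkowski's inequality in $L_2(\Omega;\mathbb{R}^n)$:
\[
\sqrt{\mathbb{E}\|\mathbf{w}\|^2}\le \sqrt{\mathbb{E}\|\mathbf{w}_f\|^2}+\gamma(\lambda)\sqrt{\mathbb{E}\|\mathbf{w}_p\|^2}.
\]
This avoids any assumption on the correlation between the two estimators, which is important because they share the same sample $\xi$. The first term is at most $\sigma_f$. The second is at most $\gamma(\lambda)\sigma_p(\lambda)\|\nabla p(\mathbf{x})\|$ by \eqref{efficiency gradient}.

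Third, I will handle the relative gradient term. By Assumption \ref{assumption:scale-grad-hessian},
\[
\|\nabla p(\mathbf{x})\|\le L_{p,1}\bigl(1+\|\mathbf{x}-\tilde{\mathbf{x}}^*\|^{\beta_1}\bigr)p(\mathbf{x}).
\]
Lemma \ref{lemma:bounded-sublevel} gives $\|\mathbf{x}-\tilde{\mathbf{x}}^*\|\le D_S$ on $S$. Since $f\ge0$ (Assumption \ref{assumption:lower-bounded}) and $\mathbf{x}\in S$, we have $\gamma(\lambda)p(\mathbf{x})\le F(\mathbf{x};\lambda)\le cF(\mathbf{x}_0;\lambda)$. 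Combining these gives $\gamma(\lambda)\sigma_p(\lambda)\|\nabla p(\mathbf{x})\|\le c\,\sigma_p(\lambda)L_{p,1}(1+D_S^{\beta_1})F(\mathbf{x}_0;\lambda)$. Adding $\sigma_f$ and squaring yields $\mathbb{E}\|\mathbf{w}\|^2\le\sigma_S^2$.

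Finally, I will expand $\mathbb{E}\|G\|^2=\|\nabla F\|^2+2\langle\nabla F,\mathbb{E}\mathbf{w}\rangle+\mathbb{E}\|\mathbf{w}\|^2$. The cross term vanishes because $\mathbf{w}$ has zero mean, which gives the second claim.

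The only step needing care is the cross-correlation between $\mathbf{w}_f$ and $\mathbf{w}_p$, which Minkowski resolves, together with making sure $\gamma p\le F$ uses nonnegativity of $f$. Neither is a genuine obstacle.
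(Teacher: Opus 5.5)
Your proposal is correct and follows the paper's proof essentially step for step. The shared steps are: Minkowski's inequality in $L_2$ to split $\mathbf{w}$ into its $f$- and $p$-parts; the efficiency bound \eqref{efficiency gradient} combined with the relative gradient bound; the radius bound $D_S$ from Lemma \ref{lemma:bounded-sublevel}; the chain $\gamma(\lambda)p(\mathbf{x})\le F(\mathbf{x};\lambda)\le cF(\mathbf{x}_0;\lambda)$, which uses $f\ge 0$; and finally the second-moment expansion, where the cross term vanishes by unbiasedness.
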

\begin{proof}{Proof.}
By Minkowski's inequality, we have:
\begin{align*}
    \mathbb{E}\prts{\| \mathbf{w}(\mathbf{x},\xi) \|^2} 
    &= \mathbb{E}\prts{\big\| \prtr{G_f(\mathbf{x},\xi) - \nabla f(\mathbf{x})} + \gamma(\lambda) \prtr{G_p(\mathbf{x},\xi; \lambda) - \nabla p(\mathbf{x})} \big\|^2} \\
    &\le \prtr{ \sqrt{\mathbb{E}\prts{\big\| G_f(\mathbf{x},\xi) - \nabla f(\mathbf{x}) \big\|^2}} + \gamma(\lambda) \sqrt{\mathbb{E}\prts{\big\| G_p(\mathbf{x},\xi; \lambda) - \nabla p(\mathbf{x}) \big\|^2}} }^2
\end{align*}
Under Assumption \ref{assumption:second-moment}, this simplifies to:
\begin{equation*}
    \mathbb{E}\prts{\| \mathbf{w}(\mathbf{x},\xi) \|^2} \le \prtr{ \sigma_f + \gamma(\lambda) \sigma_p(\lambda) \|\nabla p(\mathbf{x})\| }^2
\end{equation*}
By Assumption \ref{assumption:scale-grad-hessian}, $\|\nabla p(\mathbf{x})\| \le L_{p,1} \prtr{1 + \norm{\mathbf{x} - \tilde{\mathbf{x}}^*}^{\beta_1}} p(\mathbf{x})$. By Lemma \ref{lemma:bounded-sublevel}, every $\mathbf{x} \in S$ satisfies $\norm{\mathbf{x} - \tilde{\mathbf{x}}^*} \le D_S$, hence $1 + \norm{\mathbf{x} - \tilde{\mathbf{x}}^*}^{\beta_1} \le 1 + D_S^{\beta_1}$. Moreover, under Assumption \ref{assumption:lower-bounded}, $f(\mathbf{x}) \ge 0$ gives $\gamma(\lambda) p(\mathbf{x}) \le F(\mathbf{x}; \lambda)$, and because $\mathbf{x} \in S$, $F(\mathbf{x};\lambda) \le c F(\mathbf{x}_0;\lambda)$. Combining these bounds,
\begin{equation*}
    \gamma(\lambda) \|\nabla p(\mathbf{x})\| \le L_{p,1} \prtr{1 + D_S^{\beta_1}} \gamma(\lambda) p(\mathbf{x}) \le c L_{p,1} \prtr{1 + D_S^{\beta_1}} F(\mathbf{x}_0; \lambda)
\end{equation*}
Substituting this bound yields:
\begin{equation*}
    \mathbb{E}\prts{\| \mathbf{w}(\mathbf{x},\xi) \|^2} \le \prtr{ \sigma_f + c\sigma_p(\lambda) L_{p,1} \prtr{1 + D_S^{\beta_1}} F(\mathbf{x}_0; \lambda) }^2 = \sigma_S^2(\lambda)
\end{equation*}
Furthermore, since $G(\mathbf{x}, \xi)$ is a conditionally unbiased estimator, $\mathbb{E}\prts{\mathbf{w}(\mathbf{x}, \xi)^\top \nabla F(\mathbf{x})} = 0$. Thus:
\begin{align*}
\mathbb{E}\prts{\|G(\mathbf{x}, \xi)\|^2} 
    &= \mathbb{E}\prts{\|\mathbf{w}(\mathbf{x}, \xi) + \nabla F(\mathbf{x})\|^2} \\
    &= \mathbb{E}\prts{\|\mathbf{w}(\mathbf{x}, \xi)\|^2} + \|\nabla F(\mathbf{x})\|^2 + 2\mathbb{E}\prts{\mathbf{w}(\mathbf{x}, \xi)^\top \nabla F(\mathbf{x})} \\
    &\le \sigma_S^2(\lambda) + \|\nabla F(\mathbf{x})\|^2.
\end{align*}
\Halmos \end{proof}

\subsection{Proof of Lemma \ref{lemma:subexponential-constants}.}

\label{subsec:expo}



\begin{proof}{Proof. }
We first prove that the initial objective value $F(\mathbf{x}_0(\lambda); \lambda)$ grows sub-exponentially. 
Since $f$ is $L_f$-smooth and $f(\tilde{\mathbf{x}}^*) = 0$ (Assumption \ref{assumption:lower-bounded}), the quadratic upper bound and Cauchy-Schwarz inequality yield:
\begin{align*}
    f(\mathbf{x}_0(\lambda)) 
    &\le f(\tilde{\mathbf{x}}^*) + \nabla f(\tilde{\mathbf{x}}^*)^\top \prtr{\mathbf{x}_0(\lambda) - \tilde{\mathbf{x}}^*} + \frac{L_f}{2}\|\mathbf{x}_0(\lambda) - \tilde{\mathbf{x}}^*\|^2 \\
    &\le \|\nabla f(\tilde{\mathbf{x}}^*)\| \|\mathbf{x}_0(\lambda) - \tilde{\mathbf{x}}^*\| + \frac{L_f}{2}\|\mathbf{x}_0(\lambda) - \tilde{\mathbf{x}}^*\|^2
\end{align*}
Because $\tilde{\mathbf{x}}^*$ is a fixed point in $\mathcal{X}$, its gradient norm $\|\nabla f(\tilde{\mathbf{x}}^*)\|$ is a finite constant. By Assumption \ref{assumption:init-distance}, the distance $\|\mathbf{x}_0(\lambda) - \tilde{\mathbf{x}}^*\|$ grows sub-exponentially. The asymptotic exponential growth rate of a sum is bounded by the maximum growth rate of its terms:
\begin{align*}
    &\limsup_{\lambda\to\infty}\frac{1}{\lambda}\log f(\mathbf{x}_0(\lambda)) \\
    &\le \max \prtc{ \limsup_{\lambda\to\infty}\frac{1}{\lambda}\log \prtr{\|\nabla f(\tilde{\mathbf{x}}^*)\| \|\mathbf{x}_0(\lambda) - \tilde{\mathbf{x}}^*\|}, \limsup_{\lambda\to\infty}\frac{1}{\lambda}\log \prtr{\frac{L_f}{2}\|\mathbf{x}_0(\lambda) - \tilde{\mathbf{x}}^*\|^2} } \\
    &= 0
\end{align*}
Next, by Assumption \ref{assumption:ldp-optimization} and the safe start condition (Assumption \ref{def:safe-start}), the extreme risk term is bounded by:
\begin{align*}
    \limsup_{\lambda\to\infty}\frac{1}{\lambda}\log \prtr{\gamma(\lambda) p(\mathbf{x}_0(\lambda))} 
    &\le \limsup_{\lambda\to\infty}\frac{1}{\lambda}\log \gamma(\lambda) + \limsup_{\lambda\to\infty}\frac{1}{\lambda}\log p(\mathbf{x}_0(\lambda)) \\
    &\le I + (-I) = 0
\end{align*}
Since $F(\mathbf{x}_0(\lambda); \lambda) = f(\mathbf{x}_0(\lambda)) + \gamma(\lambda)p(\mathbf{x}_0(\lambda))$, the exponential growth rate of the combined objective is bounded by the maximum growth rate of its components:
\begin{align*}
    \limsup_{\lambda\to\infty}\frac{1}{\lambda}\log F(\mathbf{x}_0(\lambda); \lambda) 
    &\le \max \prtc{ \limsup_{\lambda\to\infty}\frac{1}{\lambda}\log f(\mathbf{x}_0(\lambda)), \limsup_{\lambda\to\infty}\frac{1}{\lambda}\log \prtr{\gamma(\lambda) p(\mathbf{x}_0(\lambda))} } \\
    &\le 0
\end{align*}
Finally, Lemma \ref{lem:safe-region-regularity}  gives the sublevel set constants as
\begin{equation*}
    L_S(\lambda) = L_f + c L_{p,2}\prtr{1 + D_S^{\beta_2}} F(\mathbf{x}_0(\lambda); \lambda), \quad
    \sigma_S(\lambda) = \sigma_f + c \sigma_p(\lambda) L_{p,1}\prtr{1 + D_S^{\beta_1}} F(\mathbf{x}_0(\lambda); \lambda),
\end{equation*}
and $B_S(\lambda) = 2 D_S(\lambda)$, where $D_S(\lambda) = c d_f F(\mathbf{x}_0(\lambda);\lambda)/\epsilon_f + d_f$ is linear in $F(\mathbf{x}_0(\lambda);\lambda)$. Every prefactor here is a $\lambda$-independent constant ($c, L_f, L_{p,1}, L_{p,2}, \sigma_f, d_f, \epsilon_f, \beta_1, \beta_2$), with the sole exception of $\sigma_p(\lambda)$, which grows at most sub-exponentially by Assumption \ref{assumption:second-moment}. Because $D_S$ is linear in $F(\mathbf{x}_0(\lambda);\lambda)$, the factors $D_S^{\beta_1}$ and $D_S^{\beta_2}$ render $L_S(\lambda)$ and $\sigma_S(\lambda)$ \emph{polynomial} in $F(\mathbf{x}_0(\lambda);\lambda)$ (while $B_S(\lambda)$ is linear). Since $F(\mathbf{x}_0(\lambda);\lambda)$ grows sub-exponentially, so does any polynomial of it; the extra factor $\sigma_p(\lambda)$ in $\sigma_S(\lambda)$ is itself sub-exponential, so the product remains sub-exponential. Consequently, all three constants inherit the sub-exponential growth rate of $F(\mathbf{x}_0(\lambda);\lambda)$. \Halmos

\end{proof}
\newpage
\section{Proofs of Lemmas \ref{lemma:descent-lemma-last} to \ref{lemma:last-iterate-configuration} and of Proposition \ref{lemma:last-iterate-sufficiency}.}
\label{appendix-sec:last-iterate}







\begin{proof}{Proof of Lemma \ref{lemma:descent-lemma-last}.}
Let $\mathbf{w}_t$ be the conditionally unbiased stochastic noise at $\mathbf{x}_t$, defined as $\mathbf{w}(\mathbf{x}_t,\xi) $ in Lemma \ref{assumption:second-moment}. By Markov's inequality, we have:
\begin{equation*}
    \mathbb{P}\prtr{\|\mathbf{w}_t\|^2 \ge \min \prtc{\frac{\varepsilon F^* L_S}{4}, d_f^2 L_S^2, \frac{\varepsilon^2 (F^*)^2}{16 B_S^2}}} \le \kappa
\end{equation*}
Therefore, with probability at least $1-\kappa$, the strict complement of this event holds. We condition the remainder of this proof on this bounded noise event.

The SGD update is:
\begin{equation*}
    \mathbf{x}_{t+1} := \mathbf{x}_t - \eta_t G(\mathbf{x}_t, \xi) = \mathbf{x}_t - \eta_t \prtr{\nabla F(\mathbf{x}_t) + \mathbf{w}_t}
\end{equation*}
We claim that given the bounded noise event and $\mathbf{x}_t \in \text{int}(S)$, the subsequent iterate satisfies $\mathbf{x}_{t+1} \in \text{int}(S)$. Define the line segment between $\mathbf{x}_t$ and $\mathbf{x}_{t+1}$:
\begin{equation*}
    \mathbf{x}(\tau) := \mathbf{x}_t + \tau \prtr{\mathbf{x}_{t+1} - \mathbf{x}_t}, \qquad \tau \in [0,1]
\end{equation*}
Assume for contradiction that $\mathbf{x}_{t+1} \notin \text{int}(S)$. Let $\tau^* := \inf \prtc{\tau \in [0,1] : \mathbf{x}(\tau) \in \partial S}$. Because the segment between $\mathbf{x}_t$ and $\mathbf{x}(\tau^*)$ is entirely contained within $S$, Lemma \ref{lemma:smooth-over-sublevel} guarantees $L_S$-smoothness. The quadratic upper bound yields:
\begin{align*}
    F(\mathbf{x}(\tau^*)) 
    &\le F(\mathbf{x}_t) + \prta{\nabla F(\mathbf{x}_t), \mathbf{x}(\tau^*) - \mathbf{x}_t} + \frac{L_S}{2}\|\mathbf{x}(\tau^*) - \mathbf{x}_t\|^2 \\
    &= F(\mathbf{x}_t) - \eta_t \tau^* \prtr{1 - \frac{\eta_t \tau^* L_S}{2}} \|\nabla F(\mathbf{x}_t)\|^2 \\
    &\quad - \eta_t \tau^* \prtr{1 - \eta_t \tau^* L_S} \nabla F(\mathbf{x}_t)^\top \mathbf{w}_t + \frac{\eta_t^2 (\tau^*)^2 L_S}{2} \|\mathbf{w}_t\|^2
\end{align*}
Applying the Cauchy-Schwarz inequality provides:
\begin{align}
    F(\mathbf{x}(\tau^*)) 
    &\le F(\mathbf{x}_t) - \eta_t \tau^* \prtr{1 - \frac{\eta_t \tau^* L_S}{2}} \|\nabla F(\mathbf{x}_t)\|^2 \nonumber \\
    &\quad + \eta_t \tau^* \prtr{1 - \eta_t \tau^* L_S} \|\nabla F(\mathbf{x}_t)\| \|\mathbf{w}_t\| + \frac{\eta_t^2 (\tau^*)^2 L_S}{2} \|\mathbf{w}_t\|^2 \label{eq:positive-constant-quadratic}
\end{align}
We evaluate this bound under two conditions:

\textbf{Case 1: $\|\nabla F(\mathbf{x}_t)\| \le \frac{\varepsilon F^*}{2 B_S}$.} 
Since $\tau^* \le 1$, $\eta_t \le \frac{1}{L_S}$, $1 - \frac{\eta_t \tau^* L_S}{2} \ge 0$, and $\sup_{y \in \mathbb{R}} y(1 - y L_S) \le \frac{1}{4 L_S}$, we refine \eqref{eq:positive-constant-quadratic} as:
\begin{equation*}
    F(\mathbf{x}(\tau^*)) \le F(\mathbf{x}_t) + \frac{1}{4 L_S} \|\nabla F(\mathbf{x}_t)\| \|\mathbf{w}_t\| + \frac{1}{2 L_S} \|\mathbf{w}_t\|^2
\end{equation*}
Given the bounded noise event and $B_S \ge d_f$, we substitute the maximum values for $\|\mathbf{w}_t\|$ relative to each term:
\begin{align*}
    F(\mathbf{x}(\tau^*)) 
    &\le F(\mathbf{x}_t) + \frac{1}{4 L_S} \prtr{\frac{\varepsilon F^*}{2 B_S}} \prtr{d_f L_S} + \frac{1}{2 L_S} \prtr{\frac{\varepsilon F^* L_S}{4}} \\
    &\le F(\mathbf{x}_t) + \frac{\varepsilon F^*}{4}
\end{align*}
By the convexity of $F$ and the Cauchy-Schwarz inequality:
\begin{equation*}
    \|\nabla F(\mathbf{x}_t)\| \|\mathbf{x}_t - \mathbf{x}^*\| \ge \prta{\nabla F(\mathbf{x}_t), \mathbf{x}_t - \mathbf{x}^*} \ge F(\mathbf{x}_t) - F^*
\end{equation*}
Because $\mathbf{x}_t \in \text{int}(S)$, Lemma \ref{lemma:bounded-distance-to-optimal} ensures $\|\mathbf{x}_t - \mathbf{x}^*\| \le B_S$. Therefore:
\begin{equation*}
    \prtr{\frac{\varepsilon F^*}{2 B_S}} B_S \ge F(\mathbf{x}_t) - F^* \implies F(\mathbf{x}_t) \le \prtr{1 + \frac{\varepsilon}{2}} F^*
\end{equation*}
Combining these bounds yields:
\begin{equation*}
    F(\mathbf{x}(\tau^*)) \le \prtr{1 + \frac{3\varepsilon}{4}} F^* < F(\mathbf{x}_0)
\end{equation*}
This contradicts $\mathbf{x}(\tau^*) \in \partial S$, as the boundary requires $F(\mathbf{x}) = 2F(\mathbf{x}_0)$. Thus, $\mathbf{x}_{t+1} \in \text{int}(S)$. Applying the same derivation for $\tau^* = 1$ concludes that $F(\mathbf{x}_{t+1}) \le (1 + 3\varepsilon/4) F^* < (1+\varepsilon)F^*$.

\textbf{Case 2: $\|\nabla F(\mathbf{x}_t)\| > \frac{\varepsilon F^*}{2 B_S}$.} 
The bounded noise event guarantees $\|\mathbf{w}_t\|^2 \le \frac{\varepsilon^2 (F^*)^2}{16 B_S^2} \le \frac{1}{4}\|\nabla F(\mathbf{x}_t)\|^2$, and thus $\|\mathbf{w}_t\| \le \frac{1}{2}\|\nabla F(\mathbf{x}_t)\|$. Substituting this into \eqref{eq:positive-constant-quadratic} yields:
\begin{align*}
    F(\mathbf{x}(\tau^*)) 
    &\le F(\mathbf{x}_t) - \eta_t \tau^* \prtr{1 - \frac{\eta_t \tau^* L_S}{2}} \|\nabla F(\mathbf{x}_t)\|^2 \\
    &\quad + \frac{\eta_t \tau^* \prtr{1 - \eta_t \tau^* L_S}}{2} \|\nabla F(\mathbf{x}_t)\|^2 + \frac{\eta_t^2 (\tau^*)^2 L_S}{8} \|\nabla F(\mathbf{x}_t)\|^2 \\
    &= F(\mathbf{x}_t) - \frac{\eta_t \tau^*}{2} \|\nabla F(\mathbf{x}_t)\|^2 + \frac{\eta_t^2 (\tau^*)^2 L_S}{8} \|\nabla F(\mathbf{x}_t)\|^2
\end{align*}
By the continuity of $F$ and $\mathbf{x}_t \in \text{int}(S)$, we know $\tau^* > 0$. With $\tau^* \le 1$ and $\eta_t L_S \le 1$, we simplify:
\begin{equation*}
    F(\mathbf{x}(\tau^*)) \le F(\mathbf{x}_t) - \frac{3 \eta_t \tau^*}{8} \|\nabla F(\mathbf{x}_t)\|^2 < F(\mathbf{x}_t) \le 2F(\mathbf{x}_0)
\end{equation*}
This strictly contradicts $\mathbf{x}(\tau^*) \in \partial S$. Therefore, $\mathbf{x}_{t+1} \in \text{int}(S)$. Applying this analysis for $\tau^*=1$ ensures descent:
\begin{equation*}
    F(\mathbf{x}_{t+1}) \le F(\mathbf{x}_t) - \frac{3 \eta_t}{8} \|\nabla F(\mathbf{x}_t)\|^2 \le F(\mathbf{x}_t) - \frac{3 \eta_t \varepsilon^2 (F^*)^2}{32 B_S^2}
\end{equation*}
We conclude that with probability at least $1-\kappa$, $\mathbf{x}_{t+1} \in \text{int}(S)$, and either $F(\mathbf{x}_{t+1}) \le (1+\varepsilon)F^*$ or $F(\mathbf{x}_{t+1}) \le F(\mathbf{x}_t) - \frac{3\eta_t \varepsilon^2 (F^*)^2}{32 B_S^2}$.
\Halmos \end{proof}



\begin{proof}{Proof of Lemma \ref{lemma:upperbound-iteration}.}
By Lemma \ref{lemma:descent-lemma-last}, if $\mathbf{x}_t \in \text{int}(S)$, then with probability at least $1 - \frac{\kappa}{T}$, the subsequent iterate satisfies $\mathbf{x}_{t+1} \in \text{int}(S)$ and either reaches the target accuracy $F(\mathbf{x}_{t+1}) \le (1+\varepsilon)F^*$ or strictly descends by $F(\mathbf{x}_{t+1}) \le F(\mathbf{x}_t) - \frac{3\eta_t \varepsilon^2 (F^*)^2}{32 B_S^2}$.

By construction, $\mathbf{x}_0 \in \text{int}(S)$. Applying a union bound over the $T$ iterations guarantees that the trajectory never escapes the sublevel set and the descent condition holds at every step with probability at least $1-\kappa$. We condition the remainder of the proof on this joint success event.

If the trajectory reaches the target accuracy at any step $\tau \le T$, the descent condition ensures that all subsequent iterates either remain at the target accuracy or descend further. Consequently, the final iterate satisfies $F(\mathbf{x}_{T}) \le (1+\varepsilon)F^*$.

Assume for contradiction that the trajectory never reaches the target accuracy. Then, the strict descent bound must hold at every iteration. Telescoping the objective decrease from $t=0$ to $T-1$ yields:
\begin{align*}
    F(\mathbf{x}_{T}) \le F(\mathbf{x}_0) - \sum_{t=0}^{T-1} \frac{3\eta_t \varepsilon^2 (F^*)^2}{32 B_S^2}
\end{align*}
By the assumed condition on the sum of step sizes, we have:
\begin{equation*}
    \sum_{t=0}^{T-1} \frac{3\eta_t \varepsilon^2 (F^*)^2}{32 B_S^2} = \frac{3 \varepsilon^2 (F^*)^2}{32 B_S^2} \sum_{t=0}^{T-1} \eta_t > F(\mathbf{x}_0)
\end{equation*}
Substituting this strict lower bound into the telescoping sum implies $F(\mathbf{x}_{T}) < 0$. This contradicts the non-negativity of $F$ established in Assumption \ref{assumption:lower-bounded}. Therefore, the trajectory must achieve $F(\mathbf{x}_{T}) \le (1+\varepsilon)F^*$.
\Halmos \end{proof}




\begin{proof}{Proof of Lemma \ref{lemma:last-iterate-configuration}.}
Fix $\lambda > 0$. We first note that the sum of step sizes satisfies:
\begin{align*}
    \sum_{t=0}^{T(\lambda)-1}\eta_t(\lambda) 
    &\ge \frac{1}{20L_S(\lambda)}\sum_{t=0}^{T(\lambda)-1}\frac{1}{(t+1)^\alpha} \\
    &\ge \frac{1}{20L_S(\lambda)}\frac{T(\lambda)}{T(\lambda)^\alpha} \\
    &= \frac{1}{20L_S(\lambda)} T(\lambda)^{1-\alpha} \\
    &\ge \frac{32 B_S(\lambda)^2 F(\mathbf{x}_0(\lambda); \lambda)}{ 3\varepsilon^2 (F^*(\lambda))^2}
\end{align*}

Next, by standard mini-batching, the variance bound for the $B(\lambda)$-minibatch SGD update reduces to:
\begin{equation*}
    \sigma(\lambda)^2 := \frac{\sigma_S(\lambda)^2}{B(\lambda)}
\end{equation*}
where $\sigma_S(\lambda)^2$ is the single-sample sublevel variance bound. Using the fact that $\frac{1}{\min(a,b,c)} \le \frac{1}{a} + \frac{1}{b} + \frac{1}{c}$, substituting the defined $B(\lambda)$ guarantees:
\begin{equation*}
    \sigma(\lambda)^2 \le \frac{\kappa}{T(\lambda)}\min \prtc{\frac{\varepsilon F^*(\lambda)L_S(\lambda)}{4}, d_f^2 L_S(\lambda)^2, \frac{\varepsilon^2 (F^*(\lambda))^2}{16 B_S(\lambda)^2}}
\end{equation*}

Thus, all conditions of Lemma \ref{lemma:upperbound-iteration} are satisfied. We conclude that $F(\mathbf{x}_{T(\lambda)}; \lambda) \le (1+\varepsilon)F^*(\lambda)$ with probability at least $1-\kappa$.
\Halmos \end{proof}



\begin{proof}{Proof of Proposition \ref{lemma:last-iterate-sufficiency}.}
For each rarity level $\lambda > 0$, if $F(\mathbf{x}_0(\lambda); \lambda) \le (1+\varepsilon)F^*(\lambda)$, the initial solution already satisfies the target accuracy, resulting in a sample complexity of zero. 

Assume instead that $F(\mathbf{x}_0(\lambda); \lambda) > (1+\varepsilon)F^*(\lambda)$. By Lemma \ref{lemma:last-iterate-configuration}, there exists an SGD scheme $\mathcal{A}(\lambda) \in \mathbb{A}_{\alpha,\mathrm{last}}$ that achieves the target accuracy with a sample complexity of $T(\lambda)B(\lambda)$. Lemma \ref{lemma:subexponential-constants} establishes that the initial objective value $F(\mathbf{x}_0(\lambda); \lambda)$ and all sublevel set constants grow sub-exponentially in $\lambda$. Because the denominators in the definitions of $T(\lambda)$ and $B(\lambda)$ are bounded away from zero, the total sample complexity inherits this sub-exponential upper bound:
\begin{equation*}
    \limsup_{\lambda\to\infty} \frac{1}{\lambda} \log \prtr{T(\lambda) B(\lambda)} \le 0
\end{equation*}
Therefore, the sequence of configurations $\prtc{(\mathcal{A}(\lambda), \mathbf{x}_0(\lambda))}_{\lambda>0}$ is efficient.

Note that while it is unknown \textit{a priori} whether the initialization already meets the target accuracy, one can simply evaluate both the initial solution and the final iterate to guarantee this bound.
\Halmos \end{proof}
\newpage

\section{Proofs of Lemmas \ref{lemma:next-step-eog} to \ref{lemma:avg-iterate-configuration}, of Proposition \ref{lemma:avg-iterate-sufficiency}, and of Theorem \ref{thm:main}.}
%
\label{appendix-sec:avg-iterate}

\begin{proof}{Proof of Lemma \ref{lemma:next-step-eog}.}
In what follows, we condition on $\mathbb{I}_{\mathrm{safe}, t}=1$. Denote by $G(\mathbf{x}_t, \xi_t)$ the stochastic gradient evaluated at $\mathbf{x}_t$. We first establish the following almost-sure bound:
\begin{equation}\label{eq:smooth-sublevel-t-e}
    \prtr{F(\mathbf{x}_{t+1}) - F^*} \mathbb{I}_{\mathrm{safe}, t+1} 
    \le \prtr{F(\mathbf{x}_t) - F^*} - \eta_t \nabla F(\mathbf{x}_t)^\top G(\mathbf{x}_t, \xi_t) + \frac{\eta_t^2 L_S}{2} \|G(\mathbf{x}_t, \xi_t)\|^2
\end{equation}
We evaluate this bound under two conditions:

\textbf{Case 1: $\mathbb{I}_{\mathrm{safe}, t+1}=1$.} 
Both $\mathbf{x}_t$ and $\mathbf{x}_{t+1}$ lie inside the locally smooth region $S$. Applying the $L_S$-smoothness quadratic upper bound directly yields \eqref{eq:smooth-sublevel-t-e}.

\textbf{Case 2: $\mathbb{I}_{\mathrm{safe}, t+1}=0$.} 
Since the left-hand side of \eqref{eq:smooth-sublevel-t-e} evaluates to $0$, we only need to show that the right-hand side is non-negative. By the continuity of $F$, there exists a point $\mathbf{x}' := \mathbf{x}_t - \tau \eta_t G(\mathbf{x}_t, \xi_t)$ for some $\tau \in (0,1]$ that lies exactly on the boundary of $S$. Because both $\mathbf{x}_t$ and $\mathbf{x}'$ are within $S$, the smoothness condition implies:
\begin{equation*}
    F(\mathbf{x}') - F(\mathbf{x}_t) \le \eta_t \tau \prtr{ -\nabla F(\mathbf{x}_t)^\top G(\mathbf{x}_t, \xi_t) + \frac{\tau \eta_t L_S}{2} \|G(\mathbf{x}_t, \xi_t)\|^2 }
\end{equation*}
Because $\mathbf{x}_t \in \text{int}(S)$ and $\mathbf{x}' \in \partial S$, we know $F(\mathbf{x}') > F(\mathbf{x}_t)$, which implies the right-hand side is strictly positive. Dividing by $\tau \eta_t > 0$ and using the fact that $\tau \le 1$, we obtain:
\begin{equation*}
    -\nabla F(\mathbf{x}_t)^\top G(\mathbf{x}_t, \xi_t) + \frac{\eta_t L_S}{2} \|G(\mathbf{x}_t, \xi_t)\|^2 \ge 0
\end{equation*}
Adding the non-negative term $F(\mathbf{x}_t) - F^*$ to this inequality confirms the right-hand side of \eqref{eq:smooth-sublevel-t-e} is strictly positive, satisfying the bound.

Having established \eqref{eq:smooth-sublevel-t-e}, we take the conditional expectation with respect to $\xi_t$. Because $G(\mathbf{x}_t, \xi_t)$ is unbiased and its second moment is bounded by Lemma \ref{assumption:second-moment}, we have:
\begin{align*}
    &\mathbb{E}_{\xi_t} \prts{ \prtr{F(\mathbf{x}_{t+1}) - F^*} \mathbb{I}_{\mathrm{safe}, t+1} } \\
    &\le \prtr{F(\mathbf{x}_t) - F^*} - \eta_t \|\nabla F(\mathbf{x}_t)\|^2 + \frac{\eta_t^2 L_S}{2} \prtr{\sigma_S^2 + \|\nabla F(\mathbf{x}_t)\|^2}
\end{align*}
Using the step size condition $\eta_t \le \frac{1}{L_S}$, we simplify this to:
\begin{equation}\label{eq:bound-gap-eog}
    \mathbb{E}_{\xi_t} \prts{ \prtr{F(\mathbf{x}_{t+1}) - F^*} \mathbb{I}_{\mathrm{safe}, t+1} } 
    \le \prtr{F(\mathbf{x}_t) - F^*} - \frac{\eta_t}{2} \|\nabla F(\mathbf{x}_t)\|^2 + \frac{\eta_t \sigma_S^2}{2}
\end{equation}

Next, we bound the gradient norm using the distance to the optimal solution. Expanding the expected distance yields:
\begin{align*}
    \mathbb{E}_{\xi_t}\prts{\|\mathbf{x}_{t+1} - \mathbf{x}^*\|^2} 
    &= \mathbb{E}_{\xi_t}\prts{\|\mathbf{x}_t - \mathbf{x}^* - \eta_t G(\mathbf{x}_t, \xi_t)\|^2} \\
    &\le \|\mathbf{x}_t - \mathbf{x}^*\|^2 - 2\eta_t \nabla F(\mathbf{x}_t)^\top (\mathbf{x}_t - \mathbf{x}^*) + \eta_t^2 \prtr{\sigma_S^2 + \|\nabla F(\mathbf{x}_t)\|^2}
\end{align*}
By the convexity of $F$, we have $\nabla F(\mathbf{x}_t)^\top (\mathbf{x}_t - \mathbf{x}^*) \ge F(\mathbf{x}_t) - F^*$. Substituting this and rearranging the terms to isolate $\frac{1}{2\eta_t} \|\nabla F(\mathbf{x}_t)\|^2$ provides:
\begin{equation*}
    \frac{\eta_t}{2} \|\nabla F(\mathbf{x}_t)\|^2 \ge \frac{1}{2\eta_t} \prtr{ \mathbb{E}_{\xi_t}\prts{\|\mathbf{x}_{t+1} - \mathbf{x}^*\|^2} - \|\mathbf{x}_t - \mathbf{x}^*\|^2 } + \prtr{F(\mathbf{x}_t) - F^*} - \frac{\eta_t \sigma_S^2}{2}
\end{equation*}
Substituting this lower bound into \eqref{eq:bound-gap-eog} cancels the $F(\mathbf{x}_t) - F^*$ term exactly, yielding:
\begin{equation*}
    \mathbb{E}_{\xi_t} \prts{ \prtr{F(\mathbf{x}_{t+1}) - F^*} \mathbb{I}_{\mathrm{safe}, t+1} } 
    \le \eta_t \sigma_S^2 + \frac{1}{2\eta_t} \prtr{ \|\mathbf{x}_t - \mathbf{x}^*\|^2 - \mathbb{E}_{\xi_t} \prts{\|\mathbf{x}_{t+1} - \mathbf{x}^*\|^2} }
\end{equation*}
This concludes the proof.
\Halmos \end{proof}



 \begin{proof}{Proof of Lemma \ref{lemma:decay-sub-linear}.} 
For any $t \le T-1$, we evaluate the conditional expectation bounded by the safe indicator. If $\mathbb{I}_{\mathrm{safe}, t}=0$, all indicator terms evaluate to zero almost surely, yielding:
\begin{align*}
    \mathbb{E}_{\xi_t}\prts{ \prtr{F(\mathbf{x}_{t+1}) - F^*} \mathbb{I}_{\mathrm{safe}, t+1} } 
    &\le \eta_t \sigma_S^2 + \frac{\|\mathbf{x}_t - \mathbf{x}^*\|^2 \mathbb{I}_{\mathrm{safe}, t}}{2\eta_t} - \frac{\mathbb{E}_{\xi_t}\prts{\|\mathbf{x}_{t+1} - \mathbf{x}^*\|^2 \mathbb{I}_{\mathrm{safe}, t+1}}}{2\eta_t}
\end{align*}
Alternatively, if $\mathbb{I}_{\mathrm{safe}, t}=1$, we apply Lemma \ref{lemma:next-step-eog}. Because $\mathbb{I}_{\mathrm{safe}, t+1} \le \mathbb{I}_{\mathrm{safe}, t}$, we can upper-bound the subtracted term by replacing $\mathbb{I}_{\mathrm{safe}, t}$ with $\mathbb{I}_{\mathrm{safe}, t+1}$:
\begin{align*}
    \mathbb{E}_{\xi_t}\prts{ \prtr{F(\mathbf{x}_{t+1}) - F^*} \mathbb{I}_{\mathrm{safe}, t+1} } 
    &\le \eta_t \sigma_S^2 + \frac{\|\mathbf{x}_t - \mathbf{x}^*\|^2 \mathbb{I}_{\mathrm{safe}, t}}{2\eta_t} - \frac{\mathbb{E}_{\xi_t}\prts{\|\mathbf{x}_{t+1} - \mathbf{x}^*\|^2 \mathbb{I}_{\mathrm{safe}, t}}}{2\eta_t} \\
    &\le \eta_t \sigma_S^2 + \frac{\|\mathbf{x}_t - \mathbf{x}^*\|^2 \mathbb{I}_{\mathrm{safe}, t}}{2\eta_t} - \frac{\mathbb{E}_{\xi_t}\prts{\|\mathbf{x}_{t+1} - \mathbf{x}^*\|^2 \mathbb{I}_{\mathrm{safe}, t+1}}}{2\eta_t}
\end{align*}
Taking the total expectation over the entire trajectory up to step $t$ unifies these two cases:
\begin{equation*}
    \mathbb{E}\prts{ \prtr{F(\mathbf{x}_{t+1}) - F^*} \mathbb{I}_{\mathrm{safe}, t+1} } 
    \le \eta_t \sigma_S^2 + \frac{\mathbb{E}\prts{\|\mathbf{x}_t - \mathbf{x}^*\|^2 \mathbb{I}_{\mathrm{safe}, t}}}{2\eta_t} - \frac{\mathbb{E}\prts{\|\mathbf{x}_{t+1} - \mathbf{x}^*\|^2 \mathbb{I}_{\mathrm{safe}, t+1}}}{2\eta_t}
\end{equation*}
Multiplying by $\eta_t$ and summing from $t=0$ to $T-1$ forms a telescoping sum:
\begin{align*}
    \sum_{t=0}^{T-1} \eta_t \mathbb{E}\prts{ \prtr{F(\mathbf{x}_{t+1}) - F^*} \mathbb{I}_{\mathrm{safe}, t+1} } 
    &\le \sigma_S^2 \sum_{t=0}^{T-1} \eta_t^2 + \frac{\mathbb{E}\prts{\|\mathbf{x}_0 - \mathbf{x}^*\|^2 \mathbb{I}_{\mathrm{safe}, 0}}}{2} - \frac{\mathbb{E}\prts{\|\mathbf{x}_T - \mathbf{x}^*\|^2 \mathbb{I}_{\mathrm{safe}, T}}}{2} \\
    &\le \sigma_S^2 T \eta_0^2 + \frac{B_S^2}{2}
\end{align*}
where the final inequality follows by dropping the non-positive subtracted term, upper-bounding the diminishing step sizes by $\eta_0$, and substituting the bounding radius $B_S$ via Lemma \ref{lemma:bounded-distance-to-optimal}. 

Because the objective gap is non-negative and $\mathbb{I}_{\mathrm{safe}, T} \le \mathbb{I}_{\mathrm{safe}, t+1}$ for all $t \le T-1$, we can substitute the final safe indicator into the left-hand side. Dividing by the sum of step sizes and applying the convexity of $F$ to the weighted-average iterate $\bar{\mathbf{x}}_T$ yields:
\begin{align*}
    \frac{1}{\sum_{t=0}^{T-1} \eta_t} \sum_{t=0}^{T-1} \eta_t \mathbb{E}\prts{ \prtr{F(\mathbf{x}_{t+1}) - F^*} \mathbb{I}_{\mathrm{safe}, t+1} } 
    &\ge \frac{\mathbb{E}\prts{ \sum_{t=0}^{T-1} \eta_t \prtr{F(\mathbf{x}_{t+1}) - F^*} \mathbb{I}_{\mathrm{safe}, T} }}{\sum_{t=0}^{T-1} \eta_t} \\
    &\ge \mathbb{E}\prts{\prtr{F(\bar{\mathbf{x}}_T) - F^*} \mathbb{I}_{\mathrm{safe}, T}}
\end{align*}
Combining these inequalities and substituting the assumptions $\eta_0 \le \frac{1}{L_S}$ and $\sigma_S^2 \le \frac{L_S^2 B_S^2}{2T}$, the numerator simplifies perfectly:
\begin{equation*}
    \mathbb{E}\prts{\prtr{F(\bar{\mathbf{x}}_T) - F^*} \mathbb{I}_{\mathrm{safe}, T}} 
    \le \frac{\prtr{\frac{L_S^2 B_S^2}{2T}} T \prtr{\frac{1}{L_S^2}} + \frac{B_S^2}{2}}{\sum_{t=0}^{T-1} \eta_t} 
    = \frac{B_S^2}{\sum_{t=0}^{T-1} \eta_t}
\end{equation*}
Finally, applying the condition that the sum of step sizes satisfies $\sum_{t=0}^{T-1} \eta_t \ge \frac{B_S^2}{\varepsilon F^*}$ completes the proof:
\begin{equation*}
    \mathbb{E}\prts{\prtr{F(\bar{\mathbf{x}}_T) - F^*} \mathbb{I}_{\mathrm{safe}, T}} \le \frac{B_S^2}{\frac{B_S^2}{\varepsilon F^*}} = \varepsilon F^*
\end{equation*}
\Halmos \end{proof}



\begin{proof}{Proof of Lemma \ref{lemma:upper-bound-escape}.} 
Throughout this proof, we condition on the event that $\mathbb{I}_{\mathrm{safe}, t}=1$, meaning $\mathbf{x}_t \in \text{int}(S)$, and omit this notation from the conditional probabilities for brevity. Our goal is to derive an upper bound on the escape probability $\mathbb{P}\prtr{F(\mathbf{x}_{t+1}) \ge 2 F(\mathbf{x}_0)}$ in terms of $\sigma_S$. 

Denote by $G(\mathbf{x}_t, \xi_t) = \nabla F(\mathbf{x}_t) + \mathbf{w}_t$ the stochastic gradient evaluated at $\mathbf{x}_t$, where $\mathbf{w}_t$ is the stochastic noise. Assume for the moment that the trajectory escapes, meaning $F(\mathbf{x}_{t+1}) \ge 2F(\mathbf{x}_0)$. Using a similar technique to the previous lemma, we define $\mathbf{x}' := \mathbf{x}_t - \tau \eta_t G(\mathbf{x}_t, \xi_t)$ for some $\tau \in (0,1]$ as the precise point where the segment between $\mathbf{x}_t$ and $\mathbf{x}_{t+1}$ intersects the boundary of $S$. Applying the $L_S$-smoothness quadratic upper bound over the sublevel set yields:
\begin{align*}
    2F(\mathbf{x}_0) = F(\mathbf{x}') 
    \le F(\mathbf{x}_t) - \eta_t \tau \nabla F(\mathbf{x}_t)^\top G(\mathbf{x}_t, \xi_t) + \frac{\eta_t^2 \tau^2 L_S}{2}\|G(\mathbf{x}_t, \xi_t)\|^2
\end{align*}
Because $\mathbf{x}_t$ is in the interior, we know $F(\mathbf{x}_t) < 2F(\mathbf{x}_0)$. Rearranging the terms and using $\tau \le 1$, we can upper-bound the objective difference:
\begin{align*}
    2F(\mathbf{x}_0) - F(\mathbf{x}_t) 
    &\le \eta_t \tau \prtr{-\nabla F(\mathbf{x}_t)^\top G(\mathbf{x}_t, \xi_t) + \frac{\eta_t \tau L_S}{2}\|G(\mathbf{x}_t, \xi_t)\|^2} \\
    &\le \eta_t \prtr{-\nabla F(\mathbf{x}_t)^\top G(\mathbf{x}_t, \xi_t) + \frac{\eta_t L_S}{2}\|G(\mathbf{x}_t, \xi_t)\|^2}
\end{align*}
Therefore, the event of escaping the sublevel set implies this inequality holds. We can thus upper-bound the escape probability:
\begin{align*}
    \mathbb{P}\prtr{F(\mathbf{x}_{t+1}) \ge 2F(\mathbf{x}_0)} 
    &\le \mathbb{P}\prtr{2F(\mathbf{x}_0) - F(\mathbf{x}_t) \le \eta_t \prtr{-\nabla F(\mathbf{x}_t)^\top G(\mathbf{x}_t, \xi_t) + \frac{\eta_t L_S}{2}\|G(\mathbf{x}_t, \xi_t)\|^2}}
\end{align*}
We expand the inner product and use the Cauchy-Schwarz inequality $\|G(\mathbf{x}_t, \xi_t)\|^2 \le 2\prtr{\|\nabla F(\mathbf{x}_t)\|^2 + \|\mathbf{w}_t\|^2}$ to rewrite the right-hand side:
\begin{align*}
    &\mathbb{P}\prtr{F(\mathbf{x}_{t+1}) \ge 2F(\mathbf{x}_0)} \\
    &\le \mathbb{P}\prtr{2F(\mathbf{x}_0) - F(\mathbf{x}_t) \le \eta_t \prtr{-\|\nabla F(\mathbf{x}_t)\|^2 - \nabla F(\mathbf{x}_t)^\top \mathbf{w}_t + \eta_t L_S\prtr{\|\nabla F(\mathbf{x}_t)\|^2 + \|\mathbf{w}_t\|^2}}}
\end{align*}
Using the assumption that step sizes satisfy $\eta_t L_S \le \frac{1}{2}$ and applying the Cauchy-Schwarz inequality to the noise cross-term:
\begin{align*}
    &\mathbb{P}\prtr{F(\mathbf{x}_{t+1}) \ge 2F(\mathbf{x}_0)} \\
    &\le \mathbb{P}\prtr{2F(\mathbf{x}_0) - F(\mathbf{x}_t) \le \eta_t \prtr{-\frac{1}{2}\|\nabla F(\mathbf{x}_t)\|^2 + \|\nabla F(\mathbf{x}_t)\| \|\mathbf{w}_t\| + \frac{1}{2}\|\mathbf{w}_t\|^2}} \\
    &\le \mathbb{P}\prtr{2F(\mathbf{x}_0) - F(\mathbf{x}_t) + \frac{\eta_t}{2}\|\nabla F(\mathbf{x}_t)\|^2 \le \eta_t \prtr{\|\nabla F(\mathbf{x}_t)\| \|\mathbf{w}_t\| + \frac{1}{2}\|\mathbf{w}_t\|^2}}
\end{align*}
Applying Markov's inequality and bounds on the stochastic noise variance from Lemma \ref{assumption:second-moment}, we obtain:
\begin{align}
    \mathbb{P}\prtr{F(\mathbf{x}_{t+1}) \ge 2F(\mathbf{x}_0)} 
    &\le \frac{\eta_t \mathbb{E}\prts{\|\nabla F(\mathbf{x}_t)\| \|\mathbf{w}_t\| + \frac{1}{2}\|\mathbf{w}_t\|^2}}{2F(\mathbf{x}_0) - F(\mathbf{x}_t) + \frac{\eta_t}{2}\|\nabla F(\mathbf{x}_t)\|^2} \nonumber \\
    &\le \frac{\eta_t \sigma_S \prtr{\|\nabla F(\mathbf{x}_t)\| + \frac{\sigma_S}{2}}}{2F(\mathbf{x}_0) - F(\mathbf{x}_t) + \frac{\eta_t}{2}\|\nabla F(\mathbf{x}_t)\|^2} \label{eq:markov-prob}
\end{align}

This upper bound effectively balances two regimes. If $\mathbf{x}_t$ is deep inside the interior, the term $2F(\mathbf{x}_0) - F(\mathbf{x}_t)$ in the denominator is large. Conversely, if $\mathbf{x}_t$ nears the boundary, the objective gap is larger, which by convexity forces the gradient norm $\|\nabla F(\mathbf{x}_t)\|$ in the denominator to be large. We formalize this by separating the analysis into two cases based on whether $2F(\mathbf{x}_0) - F(\mathbf{x}_t) \ge F(\mathbf{x}_0)$.

\textbf{Case 1: $2F(\mathbf{x}_0) - F(\mathbf{x}_t) \ge F(\mathbf{x}_0)$.} 
This condition implies $F(\mathbf{x}_t) \le F(\mathbf{x}_0)$. We can bound the escape probability by dropping the non-negative gradient term in the denominator:
\begin{align*}
    \mathbb{P}\prtr{F(\mathbf{x}_{t+1}) \ge 2F(\mathbf{x}_0)} 
    &\le \frac{\eta_t \sigma_S \prtr{\|\nabla F(\mathbf{x}_t)\| + \frac{\sigma_S}{2}}}{F(\mathbf{x}_0)}
\end{align*}
Because $\mathbf{x}_t$ is within the $L_S$-smooth region, its gradient is bounded by the objective gap: $\|\nabla F(\mathbf{x}_t)\|^2 \le 2L_S (F(\mathbf{x}_t) - F^*) \le 2L_S F(\mathbf{x}_0)$. Substituting this and utilizing $\eta_t \le \frac{1}{2L_S}$ gives:
\begin{align*}
    \mathbb{P}\prtr{F(\mathbf{x}_{t+1}) \ge 2F(\mathbf{x}_0)} 
    &\le \frac{\sigma_S \prtr{2\sqrt{2L_S F(\mathbf{x}_0)} + \sigma_S}}{4L_S F(\mathbf{x}_0)}
\end{align*}

\textbf{Case 2: $2F(\mathbf{x}_0) - F(\mathbf{x}_t) < F(\mathbf{x}_0)$.} 
This condition implies $F(\mathbf{x}_t) > F(\mathbf{x}_0)$. We bound the escape probability by dropping the strictly positive $2F(\mathbf{x}_0) - F(\mathbf{x}_t)$ term in the denominator:
\begin{align*}
    \mathbb{P}\prtr{F(\mathbf{x}_{t+1}) \ge 2F(\mathbf{x}_0)} 
    &\le \frac{\eta_t \sigma_S \prtr{\|\nabla F(\mathbf{x}_t)\| + \frac{\sigma_S}{2}}}{\frac{\eta_t}{2}\|\nabla F(\mathbf{x}_t)\|^2} \\
    &= \frac{2\sigma_S}{\|\nabla F(\mathbf{x}_t)\|} + \frac{\sigma_S^2}{\|\nabla F(\mathbf{x}_t)\|^2}
\end{align*}
By the convexity of $F$ and the fact that $\mathbf{x}_t \in \text{int}(S)$, Lemma \ref{lemma:bounded-distance-to-optimal} guarantees $\|\mathbf{x}_t - \mathbf{x}^*\| \le B_S$. Therefore, $\|\nabla F(\mathbf{x}_t)\| \ge \frac{F(\mathbf{x}_t) - F^*}{\|\mathbf{x}_t - \mathbf{x}^*\|} \ge \frac{F(\mathbf{x}_0) - F^*}{B_S}$. Because $F(\mathbf{x}_0) > (1+\varepsilon)F^*$ by assumption, we can bound the gradient strictly away from zero:
\begin{align*}
    \mathbb{P}\prtr{F(\mathbf{x}_{t+1}) \ge 2F(\mathbf{x}_0)} 
    &\le \frac{2\sigma_S B_S}{F(\mathbf{x}_0) - F^*} + \frac{\sigma_S^2 B_S^2}{\prtr{F(\mathbf{x}_0) - F^*}^2} \\
    &\le \frac{2\sigma_S B_S}{\varepsilon F^*} + \frac{\sigma_S^2 B_S^2}{\varepsilon^2 (F^*)^2}
\end{align*}
\begin{align*}
    \mathbb{P}\prtr{F(\mathbf{x}_{t+1}) \ge 2F(\mathbf{x}_0)} 
    &\le \frac{2\sigma_S B_S}{\varepsilon F^*} + \frac{\sigma_S^2 B_S^2}{\varepsilon^2 (F^*)^2}
\end{align*}

Finally, we unify both cases using the law of total probability. Because $\sigma_S \le 1$, we can factor out $\sigma_S$ and replace $\sigma_S^2$ with $\sigma_S$ to form a strict upper bound:
\begin{align*}
    \mathbb{P}\prtr{F(\mathbf{x}_{t+1}) \ge 2F(\mathbf{x}_0)} 
    &\le \sigma_S \prtr{\frac{2\sqrt{2L_S F(\mathbf{x}_0)} + 1}{4L_S F(\mathbf{x}_0)} + \frac{2 B_S}{\varepsilon F^*} + \frac{B_S^2}{\varepsilon^2(F^*)^2}} \\
    &= \sigma_S V_S
\end{align*}
This confirms the upper bound on the escape probability.
\Halmos \end{proof}



\begin{proof}{Proof of Lemma \ref{lemma:avg-sufficient-conditions}.}
We can lower-bound the probability of reaching the target accuracy by conditioning on the event that the trajectory remains within the safe sublevel set:
\begin{align*}
    \mathbb{P}\prtr{F(\bar{\mathbf{x}}_T) \le (1+\varepsilon) F^*} 
    &\ge \mathbb{P}\prtr{F(\bar{\mathbf{x}}_T) \le (1+\varepsilon) F^* \text{ and } \mathbb{I}_{\mathrm{safe}, T}=1} \\
    &= \mathbb{P}\prtr{\mathbb{I}_{\mathrm{safe}, T}=1} - \mathbb{P}\prtr{\prtr{F(\bar{\mathbf{x}}_T) - F^*} \mathbb{I}_{\mathrm{safe}, T} > \varepsilon F^*}
\end{align*}

First, we bound the probability that the entire trajectory remains safe. Applying a union bound over the $T$ iterations and substituting the single-step escape bound from Lemma \ref{lemma:upper-bound-escape}, we have:
\begin{align*}
    \mathbb{P}\prtr{\mathbb{I}_{\mathrm{safe}, T}=1} 
    &\ge 1 - \sum_{t=0}^{T-1} \mathbb{P}\prtr{\mathbb{I}_{\mathrm{safe}, t+1}=0 \mid \mathbb{I}_{\mathrm{safe}, t}=1} \\
    &\ge 1 - T \sigma_S V_S
\end{align*}
By the assumed variance constraint, we know $\sigma_S \le \frac{\kappa}{2V_S T}$. Substituting this yields:
\begin{equation*}
    \mathbb{P}\prtr{\mathbb{I}_{\mathrm{safe}, T}=1} \ge 1 - \frac{\kappa}{2}
\end{equation*}

Next, we bound the probability of failing to reach the target accuracy despite remaining in the sublevel set. Because the step sizes satisfy $\sum_{t=0}^{T-1}\eta_t \ge \frac{2B_S^2}{\kappa \varepsilon F^*}$, the expected optimality gap established in Lemma \ref{lemma:decay-sub-linear} is bounded by $\frac{\kappa \varepsilon F^*}{2}$. Applying Markov's inequality provides:
\begin{align*}
    \mathbb{P}\prtr{\prtr{F(\bar{\mathbf{x}}_T) - F^*} \mathbb{I}_{\mathrm{safe}, T} > \varepsilon F^*} 
    &\le \frac{\mathbb{E}\prts{\prtr{F(\bar{\mathbf{x}}_T) - F^*} \mathbb{I}_{\mathrm{safe}, T}}}{\varepsilon F^*} \\
    &\le \frac{ \kappa \varepsilon F^* / 2 }{\varepsilon F^*} \\
    &= \frac{\kappa}{2}
\end{align*}

Substituting these two bounds back into the original probability decomposition completes the proof:
\begin{equation*}
    \mathbb{P}\prtr{F(\bar{\mathbf{x}}_T) \le (1+\varepsilon) F^*} \ge \prtr{1 - \frac{\kappa}{2}} - \frac{\kappa}{2} = 1 - \kappa
\end{equation*}
\Halmos \end{proof}


\begin{proof}{Proof of Lemma \ref{lemma:avg-iterate-configuration}.}
Fix $\lambda > 0$. Similar to the proof of Lemma \ref{lemma:last-iterate-configuration}, we first verify that the sum of step sizes satisfies the necessary lower bound:
\begin{align*}
    \sum_{t=0}^{T(\lambda)-1}\eta_t(\lambda) 
    &\ge \frac{1}{20L_S(\lambda)}\sum_{t=0}^{T(\lambda)-1}\frac{1}{(t+1)^\alpha} \\
    &\ge \frac{1}{20L_S(\lambda)}\frac{T(\lambda)}{T(\lambda)^\alpha} \\
    &= \frac{1}{20L_S(\lambda)} T(\lambda)^{1-\alpha} \\
    &\ge \frac{2 B_S^2(\lambda)}{\kappa \varepsilon F^*(\lambda)}
\end{align*}

Next, by standard mini-batching, the variance bound for the $B(\lambda)$-minibatch SGD update reduces to:
\begin{equation*}
    \sigma^2(\lambda) := \frac{\sigma_S^2(\lambda)}{B(\lambda)}
\end{equation*}
where $\sigma_S^2(\lambda)$ is the single-sample sublevel variance bound. Using the inequality $\frac{1}{\min(a,b,c)} \le \frac{1}{a} + \frac{1}{b} + \frac{1}{c}$ (and noting that the constants in the definition of $B(\lambda)$ strictly upper-bound the required fractions), substituting the defined $B(\lambda)$ guarantees:
\begin{equation*}
    \sigma^2(\lambda) \le \min \prtc{1, \prtr{\frac{\kappa}{2V_S(\lambda)T(\lambda)}}^2, \frac{L_S^2(\lambda)B_S^2(\lambda)}{2T(\lambda)}}
\end{equation*}

Thus, all conditions of Lemma \ref{lemma:avg-sufficient-conditions} are satisfied. We conclude that the weighted-average iterate achieves $F(\bar{\mathbf{x}}_{T(\lambda)}(\lambda); \lambda) \le (1+\varepsilon)F^*(\lambda)$ with probability at least $1-\kappa$.
\Halmos \end{proof}


\begin{proof}{Proof of Proposition \ref{lemma:avg-iterate-sufficiency}.}
For each rarity level $\lambda > 0$, if $F(\mathbf{x}_0(\lambda); \lambda) \le (1+\varepsilon)F^*(\lambda)$, the initial solution already satisfies the target accuracy, resulting in a sample complexity of zero. 

Assume instead that $F(\mathbf{x}_0(\lambda); \lambda) > (1+\varepsilon)F^*(\lambda)$. By Lemma \ref{lemma:avg-iterate-configuration}, there exists an SGD scheme $\mathcal{A}(\lambda) \in \mathbb{A}_{\alpha,\mathrm{avg}}$ that achieves the target accuracy with a sample complexity of $T(\lambda)B(\lambda)$. The proof closely mirrors that of Lemma \ref{lemma:last-iterate-sufficiency}, with the additional requirement to show that the escape bound parameter $V_S(\lambda)$ grows at most sub-exponentially in $\lambda$.

Recall from Lemma \ref{lemma:upper-bound-escape} that $V_S(\lambda)$ is a function of the initial objective value $F(\mathbf{x}_0(\lambda); \lambda)$, the optimal objective value $F^*(\lambda)$, and the sublevel set constants $L_S(\lambda)$ and $B_S(\lambda)$. By Lemma \ref{lemma:subexponential-constants}, all of these numerators grow sub-exponentially, while the denominators are strictly bounded away from zero. Therefore, $V_S(\lambda)$ grows sub-exponentially.

Because $T(\lambda)$ and $B(\lambda)$ are constructed from these sub-exponentially growing terms and denominators bounded away from zero, the total sample complexity inherits this asymptotic upper bound:
\begin{equation*}
    \limsup_{\lambda\to\infty} \frac{1}{\lambda} \log \prtr{T(\lambda) B(\lambda)} \le 0
\end{equation*}
We conclude that the sequence of configurations $\prtc{(\mathcal{A}(\lambda), \mathbf{x}_0(\lambda))}_{\lambda>0}$ is efficient.
\Halmos \end{proof}

\begin{proof}{Proof of Theorem~\ref{thm:main}.}
The theorem follows from both Proposition~\ref{lemma:last-iterate-sufficiency} for $\mathbb{A}_{\alpha,\mathrm{last}}$ and Proposition~\ref{lemma:avg-iterate-sufficiency} for $\mathbb{A}_{\alpha,\mathrm{avg}}$.
\Halmos
\end{proof}

\newpage

\section{Proofs of Proposition \ref{prop:unsafe-instance} and of Theorem \ref{thm:unsafe}.}
%
\label{appendix-sec:nec-safe-start}

\begin{proof}{Proof of Proposition \ref{prop:unsafe-instance}.}
Consider the two-dimensional optimization instance where the decision variable is denoted
as $\mathbf{x} = (u,v) \in \mathcal{X} = \mathbb{R}_{\geq 0}^2$:

\begin{equation*}
    (\ProblemRare) \qquad \min_{\mathbf{x} \in \mathbb{R}_{\geq 0}^2} F(\mathbf{x};\lambda)
    \;=\; u^2 \;+\; v \;+\; \gamma(\lambda) p(\mathbf{x})
\end{equation*}

where the risk term is $p(\mathbf{x})=e^{-u-v}$ and the rarity scaling is
$\gamma(\lambda) = e^\lambda$. If a gradient update falls outside $\mathcal{X}$, we project
it back. The average-case loss $f(\mathbf{x}) = u^2 + v$ satisfies all standard regularity
assumptions detailed in Section \ref{appendix-sec:additional_assumption}.

To show the failure comes from the landscape geometry rather than from noise, we analyze a
deterministic setting where the gradient estimates for $f$ and $p$ are exact.

We consider a projected gradient descent algorithm with a monotonically diminishing
step-size schedule $\{\eta_t\}_{t=0}^{\infty}$. By solving the first-order optimality
conditions of $\ProblemRare$, we obtain the optimal solution
$\mathbf{x}^*(\lambda) = (u^*(\lambda), v^*(\lambda))$:

\begin{equation*}
    \mathbf{x}^*(\lambda) = \left(\frac{1}{2}, \lambda - \frac{1}{2}\right).
\end{equation*}

This yields a minimum objective value of $F(\mathbf{x}^*(\lambda);\lambda) = \lambda +
\frac{3}{4}$ and a target extreme risk of $p(\mathbf{x}^*(\lambda)) = e^{-\lambda}$. Thus,
the decay rate $I$ defined in Assumption \ref{assumption:ldp-optimization} is $1$.

We select a sequence of initial solutions $\{\mathbf{x}_0(\lambda)\}_{\lambda>0}$ where
$\mathbf{x}_0(\lambda) = (u_0(\lambda), v_0(\lambda)) = \left(\frac{\lambda}{4}, 0\right)$.
We first verify that this sequence is \emph{unsafe} as per Definition~\ref{def:safe-start}.
The initial extreme risk scale is:

\begin{equation*}
    \limsup_{\lambda\to\infty}\frac{1}{\lambda}\log p\big(\mathbf{x}_0(\lambda)\big)
    = -\frac{1}{4} > -1 = -I.
\end{equation*}

Moreover, this initialization satisfies Assumption \ref{assumption:init-distance}, since it
stays geometrically close to the average-case minimizer $\tilde{\mathbf{x}}^* = (0,0)$:

\begin{equation*}
    \limsup_{\lambda\to\infty}\frac{1}{\lambda} \log \|\mathbf{x}_0(\lambda) -
    \tilde{\mathbf{x}}^*\| = \limsup_{\lambda\to\infty}\frac{1}{\lambda}
    \log\left(\frac{\lambda}{4}\right) = 0.
\end{equation*}

In the following analysis, we fix a rarity level $\lambda \geq 40$ and a step-size schedule
$\{\eta_t\}_{t=0}^{\infty}$. Let $\mathbf{x}_t = (u_t, v_t)$ be the iterate at step $t$. We
aim for a target relative accuracy $\varepsilon = 1$. In this deterministic setting, the
sample complexity $T_{\lambda}^{1,\kappa}(\mathcal{A}(\lambda), \mathbf{x}_0(\lambda))$
equals the stopping time $\tau(\lambda)$, the minimum number of iterations required for the
algorithm's output $\hat{\mathbf{x}}_\tau$ to reach the target accuracy. That is, $\tau$
must satisfy:

\begin{equation*}
    F(\hat{\mathbf{x}}_\tau;\lambda) \le 2 F^*(\lambda) = 2\lambda + 1.5.
\end{equation*}

We first analyze the sample complexity of an SGD scheme outputting the last iterate
($\mathbb{A}_{\mathrm{dim},\mathrm{last}}$), where $\hat{\mathbf{x}}_\tau = \mathbf{x}_\tau$.
We split into two cases on the initial step size $\eta_0$.

\paragraph{Case 1: Overshoot due to a large step size ($\eta_0 \ge e^{-\lambda/2}$).}
At initialization, the $v$-gradient is dominated by the risk term:

\begin{equation*}
    \nabla_v F(u_0, v_0;\lambda) = 1 - e^{\lambda - \lambda/4 - 0} = 1 - e^{3\lambda/4}.
\end{equation*}

The first update yields $v_1 = v_0 - \eta_0 \nabla_v F(u_0, v_0;\lambda) = \eta_0
e^{3\lambda/4} - \eta_0$. For all subsequent steps, since $\nabla_v F(u_t, v_t;\lambda) = 1
- e^{\lambda-u_t-v_t} \le 1$, each iteration reduces the $v$-coordinate by at most $\eta_t
\le \eta_0$. Thus, for any $t \ge 1$:

\begin{equation*}
    v_t \ge v_1 - \sum_{i=1}^{t-1} \eta_i \ge \eta_0 e^{3\lambda/4} - \eta_0 t.
\end{equation*}

This holds at the stopping time, so $v_\tau \ge \eta_0 e^{3\lambda/4} - \eta_0 \tau$. Since
$F(\mathbf{x};\lambda) \ge v$ for all $\mathbf{x} \in \mathcal{X}$, reaching the target
accuracy requires $v_\tau \le 2\lambda + 1.5$. This gives the necessary condition:

\begin{equation*}
    2\lambda + 1.5 \ge \eta_0 e^{3\lambda/4} - \eta_0 \tau \implies \tau \ge
    \frac{e^{3\lambda/4}}{\eta_0} \left(\eta_0 - \frac{2\lambda + 1.5}{e^{3\lambda/4}}\right).
\end{equation*}

Because $\eta_0 \ge e^{-\lambda/2}$, the subtracted term inside the parenthesis is
negligible for $\lambda \ge 40$. This forces $\tau \ge \Omega(e^{3\lambda/4})$, so the
sample complexity grows exponentially in $\lambda$.

\paragraph{Case 2: Insufficient progress due to a small step size ($\eta_0 < e^{-\lambda/2}$).}
By the first-order optimality condition, we lower bound the objective at any output
$\mathbf{x}_\tau$:

\begin{equation*}
    F(u_\tau, v_\tau;\lambda) \ge \min_{v} F(u_\tau, v;\lambda) = F(u_\tau, \lambda -
    u_\tau;\lambda) = u_\tau^2 - u_\tau + \lambda + 1.
\end{equation*}

To achieve $F(\mathbf{x}_\tau;\lambda) \le 2\lambda + 1.5$, we need $u_\tau^2 - u_\tau \le
\lambda + 0.5$. For $\lambda \ge 40$, this gives the necessary condition $u_\tau 
\frac{\lambda}{5}$.

For the $u$-coordinate, the gradient is upper bounded by $\nabla_u F(u_t, v_t;\lambda) =
2u_t - e^{\lambda - u_t - v_t} \le 2u_t$. Therefore the updates satisfy:

\begin{equation*}
    u_{t+1} = u_t - \eta_t \nabla_u F(u_t, v_t;\lambda) \ge u_t - \eta_t(2u_t) = u_t(1 -
    2\eta_t).
\end{equation*}

Since $\eta_t \le \eta_0 < e^{-\lambda/2}$, for $\lambda \ge 40$ we have $2\eta_t \ll 1$, so
$1-2\eta_t > 0$. Applying this recursively from $u_0 = \frac{\lambda}{4}$:

\begin{equation*}
    u_t \ge u_0 \prod_{i=0}^{t-1} (1 - 2\eta_i) \ge u_0 \left(1 - \sum_{i=0}^{t-1}
    2\eta_i\right) \ge u_0(1 - 2t\eta_0).
\end{equation*}

At the stopping time $\tau$, this gives $u_\tau \ge \frac{\lambda}{4}(1 - 2\tau\eta_0)$.
Combining with the necessary condition for success:

\begin{equation*}
    \frac{\lambda}{5} > \frac{\lambda}{4}(1 - 2\tau\eta_0) \implies 1 - 2\tau\eta_0 
    \frac{4}{5} \implies \tau > \frac{1}{10\eta_0}.
\end{equation*}

Because $\eta_0 < e^{-\lambda/2}$, this forces $\tau > \frac{1}{10} e^{\lambda/2}$. In both
cases, the sample complexity for the last iterate grows exponentially in $\lambda$.

\paragraph{Extension to Average Iterate ($\mathbb{A}_{\mathrm{dim},\mathrm{avg}}$).}
We now extend this to the weighted average iterate scheme, where the output is
$\hat{\mathbf{x}}_\tau = \frac{1}{\sum_{t=1}^\tau \eta_t} \sum_{t=1}^\tau \eta_t
\mathbf{x}_t$. The argument is the same. Because the output is a convex combination of the
prior iterates, each coordinate is bounded below by the smallest of the individual iterates
up to time $\tau$.

If the step size is large ($\eta_0 \ge e^{-\lambda/2}$), the bounds from Case 1 give
$\hat{v}_\tau \ge \min_{t \in [1, \tau]} v_t \ge \eta_0 e^{3\lambda/4} - \eta_0 \tau$. The
same necessary condition $\hat{v}_\tau \le 2\lambda + 1.5$ yields the same exponential lower
bound on $\tau$.

If the step size is small ($\eta_0 < e^{-\lambda/2}$), the bounds from Case 2 give
$\hat{u}_\tau \ge \min_{t \in [1, \tau]} u_t \ge \frac{\lambda}{4}(1 - 2\tau\eta_0)$. The
condition $\hat{u}_\tau < \frac{\lambda}{5}$ forces $\tau > \frac{1}{10\eta_0} >
\frac{1}{10} e^{\lambda/2}$. So averaging does not help the unsafe start, and the sample
complexity stays exponentially large for both schemes.

With an unsafe start, SGD either overshoots the target or makes slow progress. Either way,
for any sequence of SGD schemes from $\mathbb{A}_{\mathrm{dim},\mathrm{avg}}$ or
$\mathbb{A}_{\mathrm{dim},\mathrm{last}}$, the sample complexity grows exponentially as the
rarity level $\lambda$ increases.
\Halmos \end{proof}

\begin{proof}{Proof of Theorem~\ref{thm:unsafe}.}
Proposition~\ref{prop:unsafe-instance} provides an explicit instance to prove this theorem.
\Halmos
\end{proof}
\newpage
\section{Proofs of Proposition \ref{prop:highvariance-instance} and of Theorem \ref{thm:highvariance}.}
\label{appendix-sec:nec-weak-estimator}

\begin{proof}{Proof of Proposition \ref{prop:highvariance-instance}.}
Consider a one-dimensional stochastic optimization instance where the decision variable is $x \in \mathcal{X} = \mathbb{R}_{\geq 0}$:
\begin{equation*}
    (\ProblemRare) \qquad \min_{x \in \mathbb{R}_{\geq 0}} F(x;\lambda) \;=\; x + \gamma(\lambda) p(x),
\end{equation*}
with risk term $p(x)=e^{-x}$ and rarity scaling $\gamma(\lambda) = e^\lambda$. Solving the first-order optimality condition for $\ProblemRare$ gives the optimal solution and value
\begin{equation*}
    x^*(\lambda) = \lambda \quad \text{ and } \quad F(x^*(\lambda);\lambda) = \lambda + 1.
\end{equation*}
Let $x_t$ be the $t$-th iterate of projected SGD with a fixed step size $\eta$ from $x_0$:
\begin{equation*}
    x_{t+1}:= \max \left\{0, x_t -\eta \bar{G}(x_t, \xi_{t, 1:B}) \right\},
\end{equation*}
where the batched gradient is
\begin{equation*}
\bar{G}(x_t, \xi_{t, 1:B}) = \frac{1}{B} \sum_{i=1}^{B} \left( G_f(x_t, \xi_{t,i}) + e^\lambda G_p(x_t, \xi_{t,i}) \right).
\end{equation*}

We take a noiseless average-case estimator $G_f(x, \xi) = 1$, and build an extreme-risk estimator $G_p(x, \xi)$ that meets the standard variance bound $\mathbb{E}[\|G_p(x,\xi)+e^{-x}\|^2] \le e^{-x}$ for all $x \geq 0$ but is not weakly efficient. Inside a \textit{high-variance region} $\mathcal{H}_\lambda := (1.5(\lambda+1), 1.6(\lambda+1))$, the estimator is
\begin{equation*}
  G_p(x,\xi) =   
  \begin{cases}
  -e^{-x} + e^{-x/2} & \text{with probability } 0.5 \\
  -e^{-x} - e^{-x/2}& \text{with probability } 0.5 
  \end{cases},
\end{equation*}
and outside $\mathcal{H}_\lambda$ it is noiseless, $G_p(x,\xi) = -e^{-x}$ almost surely.

Let $\tau$ be the stopping time at which SGD reaches a $1.5$-optimal solution,
\begin{equation*}
    \tau := \min_{t} \left\{t \geq 1 \mid F(x_{t};\lambda) \leq 1.5(\lambda +1)\right\}.
\end{equation*}
Because $F(x; \lambda) > x$, reaching this sublevel set requires $x_\tau \leq 1.5(\lambda+1)$.

For the initializations $\{x_0(\lambda)\}_{\lambda>0}$ we draw from
\begin{equation*}
x_0(\lambda) \in \mathcal{X}_0(\lambda ) = [1.6(\lambda+1), 3.3(\lambda+1)+1].    
\end{equation*}
These are safe starts (Assumption~\ref{def:safe-start}): $p(x_0(\lambda)) \leq e^{-1.6\lambda}$ bounds the initial risk below the decay rate $I=1$, and the distance to the average-case minimizer $\tilde{x}^* = 0$ grows only linearly, meeting the sub-exponential initial distance requirement (Assumption \ref{assumption:init-distance}).

We show that for any step size $\eta,$ there is an initialization $x_0 \in \mathcal{X}_0$ under which the sample complexity is exponential in $\lambda$. We first show a preliminary result of when an iterate is projected to zero.

\paragraph{Recovery from zero.}
Suppose an iterate lands at $x_t = 0$ with $\eta > 3.4(\lambda+1)e^{-\lambda}$. The gradient at zero is noiseless, so $x_{t+1} = \eta(e^\lambda - 1)$, and for large $\lambda$ this exceeds $1.6(\lambda+1)$, placing the iterate well beyond the high-variance region. There the noiseless gradient lowers $x$ by at most $\eta$ per step, so returning to the boundary $x \leq 1.6(\lambda +1)$ takes at least
\begin{equation}\label{eq:start-at-zero}
\frac{x_{t+1} - 1.6(\lambda+1)}{\eta} = \frac{\eta(e^\lambda - 1) - 1.6(\lambda+1)}{\eta} = \Omega(e^\lambda)
\end{equation}
iterations.

We split the proof into three step-size regimes.

\paragraph{Case 1: Large step size ($\eta > 1.7(\lambda+1)+1$).}
Take $x_0 = 1.6(\lambda+1)+1 \in \mathcal{X}_0$. For large $\lambda$ the noiseless update gives
$$ x_1 = \max\{0, 1.6(\lambda+1)+1 - \eta(1 - e^{\lambda - x_0})\} = 0, $$
and recovery from zero is exponential by \eqref{eq:start-at-zero}.

\paragraph{Case 2: Small step size ($\eta < e^{-0.1\lambda}$).}
Take $x_0 = 1.6(\lambda+1)+1 \in \mathcal{X}_0$. Since $x_t > 1.6(\lambda+1)$ throughout, the iterate stays in the noiseless region and drops by at most $\eta$ per step. Reaching the boundary $1.6(\lambda+1)$ alone takes
\begin{equation*}
    \frac{x_0 - 1.6(\lambda+1)}{\eta} = \frac{1}{\eta} > e^{0.1\lambda}
\end{equation*}
iterations.

\paragraph{Case 3: Intermediate step size ($\eta \in [e^{-0.1\lambda}, 1.7(\lambda+1)+1]$).}
The map $x \mapsto x - \eta(1 - e^{\lambda - x})$ is continuous, so there is an $x_0 \in [1.6(\lambda +1), 3.3(\lambda +1)]$ whose first step lands in the high-variance region, $x_1 \in \mathcal{H}_\lambda = (1.5(\lambda+1), 1.6(\lambda+1))$. Once $x_1 \in \mathcal{H}_\lambda$, the high-variance noise takes effect. With batch size $B$, we track the trajectory by the sign of the empirical noise in $\bar{G}$.

\begin{itemize}
    \item \textbf{Case 3.1: Positive noise dominates.} If the batch average carries excess positive noise, $\bar{G}(x_1, \xi_{1, 1:B}) \geq 1 + \frac{e^\lambda}{B}(-e^{-x_1} + e^{-x_1/2})$, then
    \begin{equation*}
        x_2 \leq \max \left\{0, x_1 + \frac{\eta e^{\lambda-x_1} - \eta e^{\lambda-x_1/2}}{B}\right\}.
    \end{equation*}
    Since $x_1/2 < 0.8(\lambda +1)$, the term $e^{\lambda-x_1/2}$ dominates $e^{\lambda-x_1}$ for large $\lambda$, and the update projects to $x_2 = 0$ unless $B$ grows exponentially to offset it. Once at zero, $\eta \geq e^{-0.1\lambda} > 3.4(\lambda+1)e^{-\lambda}$ puts us in the recovery-from-zero case, which is exponential. So $T$ or $B$ is exponential.
    
    \item \textbf{Case 3.2: Negative noise dominates.} If the batch average carries excess negative noise, $\bar{G}(x_1, \xi_{1, 1:B}) \leq 1 + \frac{e^\lambda}{B}(-e^{-x_1} - e^{-x_1/2})$, then
    \begin{align*}
        x_2 &= \max \left\{0, x_1 - \eta \left(1 - \frac{e^{\lambda -x_1} + e^{\lambda -x_1/2}}{B}\right)\right\} \\
        &\geq 1.5(\lambda+1) + \eta \left(\frac{e^{0.2\lambda-0.8}}{B} - 1\right).
    \end{align*}
    Unless $B$ is exponentially large, the positive exponential term dominates and $x_2 > 1.6(\lambda+1)$. From there descent is capped at $\eta$, so returning to the boundary takes $\Omega(e^{0.1\lambda})$ iterations. Again $T$ or $B$ is exponential.
    
    \item \textbf{Case 3.3: Balanced noise.} The noise cancels across the batch with probability at most $0.5$, and this probability falls as $B$ grows, so this outcome cannot deliver convergence with probability $\ge 1 - \kappa$.
\end{itemize}

Therefore, for any step-size sequence there is a safe-start initialization $x_0 \in \mathcal{X}_0$ under which reaching the target accuracy with confidence $1-\kappa$ costs an exponential number of samples, whether through the iteration count $T$ or the batch size $B$.
\Halmos \end{proof}

\begin{proof}{Proof of Theorem~\ref{thm:highvariance}.}
Proposition~\ref{prop:highvariance-instance} provides an explicit instance to prove this theorem.
\Halmos
\end{proof}


\end{document}